\documentclass{amsart}

\usepackage{hyperref}

\newcommand{\norm}[2][]{\|{#2}\|_{{#1}}}

\newcommand{\EE}{{\mathbf E}}
\newcommand{\dd}{ { \mathrm{d}} }
\newcommand{\ee}{ { \mathrm{e}} }
\DeclareMathOperator*{\Tr}{\mathrm{Tr}}

\newcommand{\cF}{{ \mathcal F}}
\newcommand{\cD}{{ \mathcal D}}

\newcommand{\IR}{{ \mathbf R}}

\newcommand{\HS}{\mathrm{HS}}

\newcommand{\ts}{k}
\newcommand{\tsa}{\delta}
\newcommand{\tsb}{\gamma}
\newcommand{\hts}{\tfrac{\ts}{2}}
\newcommand{\Rk}{R_{\frac{\ts}{2}}}
\newcommand{\Ra}{R_{\frac{\tsa}2}}
\newcommand{\Rb}{R_{\frac{\tsb}2}}
\newcommand{\Jk}{J_{\ts}}
\newcommand{\Ja}{J_{\tsa}}
\newcommand{\Jb}{J_{\tsb}}
\newcommand{\Ak}{\mathcal{A}_{\ts}}
\newcommand{\Aa}{\mathcal{A}_{\tsa}}
\newcommand{\Ab}{\mathcal{A}_{\tsb}}
\newcommand{\mm}{M}
\newcommand{\Mk}{\mm_{\ts}}
\newcommand{\tMk}{\tilde{\mm}_{\ts}}
\newcommand{\tMa}{\tilde{\mm}_{\tsa}}
\newcommand{\tMb}{\tilde{\mm}_{\tsb}}
\newcommand{\Ma}{\mm_{\tsa}}
\newcommand{\Mb}{\mm_{\tsb}}
\newcommand{\fk}{f_{\ts}}
\newcommand{\fa}{f_{\tsa}}
\newcommand{\fb}{f_{\tsb}}
\newcommand{\Fk}{F_{\ts}}
\newcommand{\Fa}{F_{\tsa}}
\newcommand{\Fb}{F_{\tsb}}
\newcommand{\Xt}{\tilde{X}}
\newcommand{\Xtk}{\Xt_{\ts}}
\newcommand{\Xta}{\Xt_{\tsa}}
\newcommand{\Xtb}{\Xt_{\tsb}}

\newcommand{\XX}{X}
\newcommand{\Xj}{\XX^j}
\newcommand{\Xjm}{\XX^{j-1}}
\newcommand{\Yj}{Y^j}
\newcommand{\Zj}{Z^j}
\newcommand{\Wj}{\Delta W^j}
\newcommand{\Xb}{\bar{X}}
\newcommand{\Xh}{\hat{X}}
\newcommand{\Xbe}{X_{\mathrm{be}}}
\newcommand{\Xbej}{\Xbe^j}

\newcommand{\dH}{\dot{H}}

\newcommand{\Ek}{E_k}

\newcommand{\Wk}{W_{\Ak}}
\newcommand{\WA}{W_A}
\newcommand{\HSAQ}{\|A^{1/2}Q^{1/2}\|_{\HS}}

\newcommand{\tY}{\tilde{Y}}

\newcommand{\Ordo}{\mathcal{O}}

\newcommand{\R}{\mathbb{R}}

\newcommand{\Dom}{\cD}

\newtheorem{thm}{Theorem} [section]
\newtheorem{corollary} [thm] {Corollary}
\newtheorem{lem} [thm] {Lemma}
\newtheorem{prop} [thm] {Proposition}
\newtheorem{rem} [thm] {Remark}
\newtheorem{assumption}[thm]{Assumption}
\theoremstyle {definition}

\numberwithin{equation}{section}

\begin{document}

\keywords{stochastic partial differential equation, Allen--Cahn
  equation, additive noise, Wiener process, Euler method, time
  discretisation, strong convergence, splitting method}
\subjclass[msc2010]{60H15, 60H35, 65C30}%

\title[]{On the discretisation in time of the stochastic Allen--Cahn
  equation}

\author[M.~Kov\'acs]{Mih\'aly Kov\'acs}
						
\address{Department of Mathematics and Statistics,
  University of Otago, P.O.~Box 56, Dunedin 9054, New Zealand} 

\author[S.~Larsson]{Stig Larsson}
\address{Department of Mathematical Sciences,
  Chalmers University of Technology and University of Gothenburg,
  SE--412 96 Gothenburg,
  Sweden}

\author[F.~Lindgren]{Fredrik Lindgren}
\address{Cybermedia Center, Osaka University, 1--32 Machikaneyama,
  Toyonaka, Osaka 560--0043, Japan}

\begin{abstract}
  We consider the stochastic Allen--Cahn equation perturbed by smooth
  additive Gaussian noise in a bounded spatial domain with smooth boundary in
  dimension $d\le 3$, and study the semidiscretisation in time of the
  equation by an Euler type split-step method with step size $k>0$. We show that the method
  converges strongly with a rate $O(k^{\frac12})$. By means of
  a perturbation argument, we also establish the strong convergence of
  the standard backward Euler scheme with the same rate.
\end{abstract}

\maketitle                  

\section{Introduction}

Let $\mathcal{D}\subset \mathbb{R}^d$, $d\le 3$, be a bounded spatial domain
with smooth boundary $\partial\cD$ and consider the stochastic partial
differential equation written in the abstract It\^o form
\begin{equation}\label{eq:sac}
\dd X(t)+AX(t)\,\dd t+f(X(t))\,\dd t=\dd W(t),~t\in(0,T];\quad X(0)=X_0,
\end{equation}
where $\{W(t)\}_{t\ge 0}$ is an $L_2(\cD)$-valued $Q$-Wiener process
on a filtered probability space
$(\Omega, \cF, \mathbb{P}, \{\cF_t\}_{t\ge 0})$ with respect to the
normal filtration $ \{\cF_t\}_{t\ge 0}$.  We use the notation
$H=L_2(\cD)$ with inner product $\langle\cdot\,,\cdot\rangle$ and
induced norm $\norm{\cdot}$ and $V=H^1_0(\cD)$ with inner product
$\langle\cdot\,,\cdot\rangle_1:=\langle \nabla \cdot\,,\nabla
\cdot\rangle$
and induced norm $\norm[1]{\cdot}$. Moreover, let
$Au=- \Delta u$ with $D(A)=H^2(\mathcal D)\cap H^1_0(\mathcal{D})$,
that is, the Dirichlet Laplacian on $H$.
We denote by $\{E(t)\}_{t\ge 0}$ the analytic semigroup in $H$
generated by $-A$.  Finally, $f\colon D_f\subset H\to H$ is a
Nemytskij operator $f(u)(s)=P(u(s))$, $s\in \mathcal{D}$, with
$P(r)=r^3-\beta^2r$; that is, $P=F'$ where
$F(r)=\frac14(r^2-\beta^2)^2$ is a double well potential. Note that
$f$ is only locally Lipschitz and does not satisfy a linear growth
condition. It does, however, satisfy a global one-sided Lipschitz
condition \eqref{eq:onesided1} and a local Lipschitz condition
\eqref{eq:local}, see below. Since $P$ is a third degree polynomial
and the spatial dimension $d\leq 3$, we have, by Sobolev's inequality,
that $H^1_0(\cD) \subset L_6(\cD)\subset D_f$.

We consider a fully implicit split-step scheme for the temporal
discretisation of \eqref{eq:sac} via the iteration, for $j=0,1,\dots, N-1$,
\begin{align}
X^0&=X_0,\label{eq:ss10}\\
Y^j+\hts A Y^j&= X^j,\label{eq:ss1a}\\
Z^j+\ts f(Z^j)&= Y^j,\label{eq:ss1b}\\
X^{j+1}+\hts A X^{j+1}&=Z^j+\Delta W^j,
\label{eq:ss1c}
\end{align}
where $k=T/N$ is a step size with $2\beta^2\ts<1$, $t_j=jk$, and
$\Delta W^j=W(t_{j+1})-W(t_j)$. This scheme is implicit also in the
drift term $f$, but, to compute $X^{j+1}$ one only needs to evaluate
$f$ at the $\cF_{t_j}$-measurable function $Z^j$. This is a key point,
since it allows the construction of an It\^o-process and the use of
It\^o's formula in the analysis.


The scheme in \eqref{eq:ss10}--\eqref{eq:ss1c} is an attempt to
generalise the method of Higham, Mao, and Stuart \cite{HMS} for SDEs
to the infinite-dimensional case. The specific form of the
non-linearity, $f(u)(s)=u(s)^3-\beta^2u(s)$, $s\in \mathcal{D}$,
considered here, is only used when proving the existence of solutions
of \eqref{eq:ss1b} in Lemma~\ref{lem:fprop}. Apart from this, the
properties of $f$ that will be utilised are the following one-sided
Lipschitz condition, local Lipschitz condition, and polynomial growth
condition:
\begin{align}
\langle f(x)-f(y),x-y \rangle&\geq -\beta^2\|x-y\|^2,\quad x\in H,\label{eq:onesided1}\\
\|A^{-1/2}(f(x)-f(y))\|&\le P^1(\|x\|_1,\|y\|_1)\|x-y\|,\quad x\in V,\label{eq:local}\\
\|f(x)\|&\leq P^2(\|x\|_1), \quad x\in V, \label{eq:fgrowth}
\end{align}
where $P^1$ and $P^2$ are  polynomials. In our specific case,
\begin{equation*}
P^1(s,t)=C_1(t^2+s^2+1), \quad
P^2(s)=C_2(s^3+1),
\end{equation*}
for some sufficiently large numbers $C_1$ and $C_2$.  We also use the
fact that $f(0)=0$ and the one-sided growth properties
\begin{align}\label{eq:diss1}
\langle f(x),x\rangle&\geq -\beta^2\|x\|^2, \quad x\in H,
\\
\label{eq:diss2}
\langle f(x),x\rangle_1&\geq -\beta^2\|x\|_1^2,\quad x\in V.
\end{align}



The motivation for the three-step scheme is that it has a symmetry
property with respect to $f$ that allows us to view it as the
Euler--Maruyama scheme for a perturbed equation, see \eqref{eq:pspde},
where $A$ is replaced by a bounded, self-adjoint, positive definite
operator $\Ak$, and $f$ is replaced by an operator $\Fk$ that is
globally Lipschitz continuous and retains the properties
\eqref{eq:onesided1} and \eqref{eq:diss1}--\eqref{eq:diss2}. The
smoothing of the operators allows us to use It\^o's formula, paving
the way for a situation where the one-sided Lipschitz condition helps
us to prove moment bounds and establish the correct convergence rate.
In contrast to, for example, \cite{KLLAC}, we thus avoid the use of
the mild form of \eqref{eq:sac}, which only allows for a proof of the
mere fact of strong convergence without rate. How the symmetry comes
into play will be clear from the proof of Item~\eqref{it:oneside} of
Lemma~\ref{lem:fprop}.

Given that the solution of \eqref{eq:ss10}--\eqref{eq:ss1c} converges
to the solution of \eqref{eq:sac} it is not difficult to prove that
the solution of the standard fully implicit backward Euler (BE) scheme
for \eqref{eq:sac},
\begin{equation}\label{eq:be}
\Xbej-\Xbe^{j-1}+kA\Xbej+kf(\Xbej)=\Delta W^{j-1},
\  j=1,2,\ldots,N;\quad \Xbe^0=X_0.
\end{equation}
also converges, and with the same rate. The main findings of this
paper are summarised in the following theorem.  We use the notation
$\norm[s]{\cdot}=\norm{A^{s/2}\cdot}$, $s=1,2$.

\begin{thm}\label{thm:main}
  Let $\{X(t)\}_{0\leq t\leq T},\{X^n\}_{0\leq n\leq N}$, and
  $\{\Xbe^n\}_{0\leq n\leq N}$ be the solutions of \eqref{eq:sac},
  \eqref{eq:ss10}--\eqref{eq:ss1c}, and \eqref{eq:be},
  respectively. Assume further that $\EE\|X_0\|^{18}_1<\infty$,
  $\EE\|X_0\|_2^2<\infty$, and $\|A^{1/2}Q^{1/2}\|_\HS<\infty$. Then,
  for any $\ts\leq \ts_0$ with $2\ts_0\beta^2<1$, there exist $c,C>0$,
  depending on $T$, $\EE\|X_0\|^{18}_1$, $\EE\|X_0\|_2^2$,
  $\|A^{1/2}Q^{1/2}\|_\HS$, and $\ts_0$, such that
\begin{align}\label{eq:tildeconv}
\EE\sup_{0\leq n\leq N}\|X^n-X(t_n)\|^2&\leq c\ts,
\\
\label{eq:beconv}
\EE\sup_{0\leq n\leq N}\|\Xbe^n-X(t_n)\|^2&\leq C\ts.
\end{align}
\end{thm}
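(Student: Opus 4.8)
The plan is to prove \eqref{eq:tildeconv} first and then obtain \eqref{eq:beconv} by a perturbation argument. For the split-step scheme, the key idea advertised in the introduction is to recast \eqref{eq:ss10}--\eqref{eq:ss1c} as the Euler--Maruyama scheme for the perturbed SPDE \eqref{eq:pspde} in which $A$ is replaced by the bounded, self-adjoint, positive operator $\Ak$ (built from the resolvent $\Rk=(I+\hts A)^{-1}$) and $f$ by the globally Lipschitz, one-sided-Lipschitz map $\Fk$. First I would make this reformulation precise: eliminating $Y^j$ and $Z^j$ from \eqref{eq:ss1a}--\eqref{eq:ss1c} using Lemma~\ref{lem:fprop}, one checks that $X^{j+1}=X^j-\ts\Ak X^j-\ts\Fk(\text{something measurable w.r.t.\ }\cF_{t_j})+(\text{smoothed noise increment})$, which is a genuine It\^o--Euler step because $f$ is evaluated at an $\cF_{t_j}$-measurable argument. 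One then introduces the continuous-time interpolant $\Xtk(t)$ solving the perturbed SPDE driven by the piecewise-constant ``frozen'' coefficients, so that $\Xtk(t_n)=X^n$ exactly.

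Second, I would prove uniform-in-$\ts$ moment bounds: $\EE\sup_{t\le T}\norm[1]{\Xtk(t)}^{p}\le C$ for $p$ up to $18$, and $\EE\sup_{t\le T}\norm[2]{\Xtk(t)}^2\le C$, mirroring the corresponding a priori bounds for $X$ itself (which presumably hold by the same arguments using \eqref{eq:diss1}--\eqref{eq:diss2}). Here is where It\^o's formula applied to $\norm{\Xtk}^2$, $\norm[1]{\Xtk}^2$, etc., together with the dissipativity \eqref{eq:diss1}, \eqref{eq:diss2} and the hypothesis $\HSAQ<\infty$, does the work; the boundedness of $\Ak$ and the global Lipschitz property of $\Fk$ make all stochastic integrals and Gronwall steps legitimate, and the high exponent $18$ is dictated by the need to absorb the polynomial $P^1$ (degree $2$ in each slot) in the local Lipschitz estimate \eqref{eq:local} after several applications of H\"older's inequality. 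I would also record a time-regularity estimate $\EE\norm{\Xtk(t)-\Xtk(t_n)}^2\le C(t-t_n)$ for $t\in[t_n,t_{n+1}]$, again from It\^o's formula plus the moment bounds.

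Third comes the error analysis proper. Writing $e(t)=\Xtk(t)-X(t)$ and using that $X$ solves \eqref{eq:sac} while $\Xtk$ solves the perturbed equation, It\^o's formula for $\norm{e(t)}^2$ produces three groups of terms: (i) the ``operator perturbation'' $\inner{(A-\Ak)X}{e}$, controlled by $\norm{(A-\Ak)X}$, which is $O(\ts)$ in a suitable sense because $A-\Ak=A-\tfrac1\ts(I-\Rk)=O(\ts A^2)$ on $D(A)$, so here the bound $\EE\norm[2]{X(t)}^2<\infty$ is consumed; (ii) the ``nonlinearity perturbation and freezing'' term $\inner{f(X)-\Fk(\Xtk(t_n))}{e}$, which is split into $\inner{f(X(t))-f(\Xtk(t_n))}{e}$ --- handled by the one-sided Lipschitz \eqref{eq:onesided1} (giving $-\beta^2\norm{e}^2$ up to the gap $\Xtk(t)-\Xtk(t_n)$, absorbed via the time-regularity estimate and \eqref{eq:local}) --- plus $\inner{f-\Fk}{\cdot}$, which vanishes on the relevant range by construction of $\Fk$ in Lemma~\ref{lem:fprop}; (iii) a noise-perturbation term from replacing $\dd W$ by its smoothed increment, again $O(\ts)$ by $\HSAQ<\infty$. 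After taking expectations, the $-\beta^2\norm{e}^2$ term and the Gronwall lemma close the estimate in mean square; the supremum inside the expectation in \eqref{eq:tildeconv} is then recovered by the Burkholder--Davis--Gundy inequality applied to the martingale part, paying with one more power of the moment bounds. The main obstacle is exactly the bookkeeping in step (iii)/(ii): one must keep every nonlinear error factor multiplied by something of size $O(\ts^{1/2})$ or better, and control it by H\"older against the high moments --- the polynomial growth \eqref{eq:fgrowth}--\eqref{eq:local} is what forces the $18$th-moment hypothesis, and getting the powers to match is the delicate part.

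Finally, \eqref{eq:beconv} follows by a perturbation argument: the backward Euler iterate $\Xbe$ and the split-step iterate $X^n$ satisfy schemes that differ only in how $f$ is treated (implicit at $\Xbej$ versus the split resolvent $Z^j$). Subtracting \eqref{eq:be} from the one-step form of \eqref{eq:ss10}--\eqref{eq:ss1c}, testing the difference against $\Xbej-X^j$, and using \eqref{eq:onesided1} together with the already-established uniform moment bounds for $X^n$ (and analogous ones for $\Xbe^n$, proved the same way directly from \eqref{eq:be} using \eqref{eq:diss1}--\eqref{eq:diss2}), one gets $\EE\sup_n\norm{\Xbe^n-X^n}^2\le C\ts$; combining with \eqref{eq:tildeconv} via the triangle inequality yields \eqref{eq:beconv}.
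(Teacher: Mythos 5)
Your high-level architecture (reformulate as Euler--Maruyama for the perturbed equation \eqref{eq:pspde}, bound moments, compare, then treat backward Euler as a perturbation of the split-step scheme) matches the paper, and the final BE perturbation step is essentially Theorem~\ref{thm:BESSerr}. However the central step — controlling $X - \Xtk$ — contains two genuine errors that would cause the argument to collapse, and you also conflate two distinct objects.

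First, you write that $\Xtk$ is ``the continuous-time interpolant... so that $\Xtk(t_n)=X^n$ exactly.'' That is not so. The paper keeps two different objects apart: $\Xtk$ is the \emph{exact} solution of the perturbed SPDE \eqref{eq:pspde} (it does not pass through the iterates $X^n$), while the continuous interpolant $\Xh$ of \eqref{eq:xhat1} is the object satisfying $\Xh(t_n)=X^n$. The error must therefore be split as $e=(X-\Xtk)+(\Xtk-\Xh)$, and the two pieces require completely different arguments (Theorems~\ref{thm:tildeerror} and \ref{thm:splitsteperror}). Your plan fuses these into one, so the ``freezing'' error and the ``operator/nonlinearity perturbation'' error are not cleanly separable.

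Second, and more seriously, the direct It\^o comparison of $\norm{X-\Xtk}^2$ that you propose cannot be carried out as stated. (i) The term $\inner{(A-\Ak)X}{e}$ requires higher regularity of $X$ than is available: one has $A-\Ak=\tfrac{k}{4}A^2(3+kA)\Rk^2$, and with the Yosida bounds \eqref{eq:arkg} a factor $k^{1/2}$ only comes out at the price of $\norm[3]{X}$, whereas Theorem~\ref{thm:existenceandregularityofX} only gives $\dH^1$ moments for $X$ (the hypothesis $\EE\norm[2]{X_0}^2<\infty$ controls the discrete iterates, not $X(t)$ itself). (ii) Your claim that $\inner{f-\Fk}{\cdot}$ ``vanishes on the relevant range by construction of $\Fk$'' is false: $\Fk(x)=\Rk f(\Jk\Rk x)$ is a Yosida-type approximation of $f$, not a restriction of $f$, and the gap $f-\Fk$ is a genuine $O(k^{1/2})$ error term that needs to be estimated (see $e_3$ in the proof of Lemma~\ref{lem:XaX}). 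The paper avoids both pitfalls by \emph{not} comparing $X$ and $\Xtk$ directly via It\^o: instead it shows that $\{\Xtk\}$ is Cauchy with explicit rate, $\EE\sup_t\norm{\Xt_{\tsa}-\Xt_{\tsb}}^2\le C\tsa$ (both objects are sufficiently regular to justify It\^o and the one-sided Lipschitz cancellations), and then separately shows $\Xtk\to X$ with no rate via a pathwise Gronwall argument on large-probability sets and the mild formulation (Lemma~\ref{lem:XaX}); taking $\tsb\to 0$ in the Cauchy estimate delivers the rate. That indirection is exactly what makes the regularity budget close.
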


\begin{proof}
  The first result, \eqref{eq:tildeconv}, is a consequence of
  Theorem~\ref{thm:tildeerror}, Theorem~\ref{thm:splitsteperror}, and
  the triangle inequality. The second, \eqref{eq:beconv}, follows from
  \eqref{eq:tildeconv} and from Theorem~\ref{thm:BESSerr} by using,
  again, the triangle inequality.
\end{proof}

Strong convergence results for time discretisation schemes for SPDEs
with globally Lipschitz coefficients, or at least some sort of linear
growth condition as in \cite{MR2393581}, are abundant, see, for
example, \cite{CoxVN,MR2073438,MR1964941,GyM,Haus1,Haus2} and the
references therein. In contrast, there are only few results on strong
convergence of time discretisation schemes for SPDEs with
superlinearly growing coefficients. Furthermore, almost all of these
results are establishing the fact of strong convergence with no rate
given \cite{MR3081484,GyM,GySS,KLLAC,Kur}. Only in a very recent paper
\cite{JP} do the authors obtain strong rates, by employing an
exponential integrator scheme for a class of SPDEs without a linear
growth condition on the coefficients. The methods used in the present
paper and the split-step scheme \eqref{eq:ss10}--\eqref{eq:ss1c}
itself are completely different from the methods and the numerical
scheme in \cite{JP} and also give a result for the strong rate of
convergence of the classical fully implicit backward Euler
method. Finally, we also mention the thesis \cite{Kop} where, in
Chapter~5, a weak rate of convergence is obtained for an SPDE with
space-time white noise in one spatial dimension and with a
non-globally Lipschitz (polynomial) semilinear term.

The paper is organised as follows. In Section~\ref{sec:prelim} we
gather some necessary background material on deterministic and
stochastic evolution equations. In Section~\ref{sec:reform} we rewrite
the original split-step scheme \eqref{eq:ss10}--\eqref{eq:ss1c} as an
explicit Euler discretisation of an auxiliary stochastic differential
equation \eqref{eq:pspde} involving bounded operators and globally
Lipschitz non-linearities.  In Lemmata~\ref{lem:qarkbnd}--\ref{lem:fprop}
we establish some key properties of the coefficients appearing in
\eqref{eq:pspde}. In Section~\ref{sec:momb} we first state our
hypothesis on the smoothness of the initial data and the regularity of
the covariance operator of $W$. In Section~\ref{subsec:bxkt} we prove
moment bounds on the solution of the auxiliary equation
\eqref{eq:pspde} making fully use of the boundedness of the
coefficients so that an It\^o formula can be rigorously employed. In
Subsection~\ref{subsec:dmomb} we establish various moment bounds on
the solution of the split-step scheme \eqref{eq:ss10}--\eqref{eq:ss1c}
based on energy arguments.  In Section~\ref{subsec:baux} we introduce
and compare a piecewise constant and a piecewise linear
continuous-in-time adapted process, both of which coincide with the
solution of the split-step scheme on the temporal grid. These
auxiliary processes turn out to be useful tools and they already
appear in the SDE setting, for example, in \cite{HMS}. In
Section~\ref{sec:error} we analyse the error in the split-step scheme
and in the backward Euler scheme and the conclusion of these results
are summarised in Theorem~\ref{thm:main} above.

\section{Preliminaries}\label{sec:prelim}

Throughout the paper we will use various norms for linear operators on
a Hilbert space. We denote by $\mathcal{L}(H)$ the space of bounded
linear operators on $H$ with the usual operator norm denoted by
$\norm{\cdot}$. If for a self-adjoint, positive semidefinite operator
$T\colon H\to H$, the sum
\begin{equation*}
\Tr T:=\sum_{k=1}^\infty\langle Te_k,e_k\rangle<\infty
\end{equation*}
for an orthonormal basis (ONB) $\{e_k\}_{k\in \mathbb{N}}$ of $H$,
then we say that $T$ is trace class. In this case $\Tr T$, the trace
of $T$, is independent of the choice of the ONB. If for an operator
$T\colon H\to H$, the sum
\begin{equation*}
\|T\|_{\HS}^2:=\sum_{k=1}^\infty\|Te_k\|^2<\infty
\end{equation*}
for an ONB $\{e_k\}_{k\in \mathbb{N}}$ of $H$, then we say that $T$ is
Hilbert--Schmidt and call $\|T\|_{\HS}$ the Hilbert--Schmidt norm of
$T$.  The Hilbert--Schmidt norm of $T$ is independent of the choice of
the ONB. We have the following well-known properties of the trace and
Hilbert--Schmidt norms, see, for example, \cite[Appendix C]{DPZ},
\begin{align}
\label{eq:hs}
\|T\|&\le \|T\|_{\HS},\quad \|TS\|_{\HS}\le \|T\|_{\HS}\|S\|,
\quad\|ST\|_{\HS}\le \|S\|\,\|T\|_{\HS},
\\
\label{eq:tr}
\Tr Q&=\|Q^{\frac12}\|_{\HS}^2=\|T\|^2_{\HS}=\|T^*\|_{\HS}^2,
\quad\text{ if $Q=TT^*$.}
\end{align}

Next, we introduce fractional order spaces and norms.  It is well
known that our assumption that $A$ is the Dirichlet Laplacian in the
smooth spatial domain $\mathcal{D}$ implies the existence of a sequence
of non-decreasing positive real numbers $\{\lambda_k\}_{k\geq 1}$ and
an ONB $\{e_k\}_{k\geq 1}$ of $H$ such that $Ae_k = \lambda_k e_k$ and
$\lim_{k\to\infty} \lambda_k = \infty$.  We define the scalar product
and norm of order $s\in \IR$:
\begin{equation*}\label{eq:fp}
\langle v,w\rangle_s:=\sum_{k=1}^{\infty}\lambda_k^{s}\langle
v,e_k\rangle \langle w,e_k\rangle,
\quad
\|v\|_s^2=\sum_{k=1}^{\infty}\lambda_k^{s}\langle v,e_k\rangle^2.
\end{equation*}
For $s\ge0$ the fractional order space is defined by
$\dot{H}^s=D(A^{s/2}):=\{v\in H:\|v\|_s<\infty\}$ and for $s<0$ as the
closure of $H$ with respect to the $\norm[s]{\cdot}$-norm.  It is well-known
that $\dH^1=H^1_0(\Dom)$ and $\dH^2=H^2(\Dom)\cap H^1_0(\Dom)$.

We recall the fact that the semigroup $\{E(t)\}_{t\ge 0}$ generated by
$-A$ is analytic because $-A$ is self-adjoint and negative definite,
see, for example, \cite[Example 3.7.5]{MR2798103}. For such a
semigroup it follows from the Spectral Theorem that for $\alpha\ge 0$,
\begin{align} \label{eq:anal0}
\|A^{\alpha}E(t)v\|\le C_{\alpha}t^{-\alpha}\|v\|,\quad t>0;
\quad C_\alpha=\sup\{\lambda^\alpha e^{-\lambda}:\lambda\ge 0\}. 
\end{align}
We will also use the Burkholder--Davies--Gundy inequality for
It\^o-integrals of the form
$\int_0^t\langle \eta(s),\dd \tilde{W}(s)\rangle$, where $\tilde{W}$
is a $\tilde{Q}$-Wiener process. For this kind of integral,
the Burkholder--Davies--Gundy inequality, \cite[Lemma 7.2]{DPZ}, takes
the form
\begin{equation}\label{eq:rbdg}
\EE\sup_{t\in [0,T]}
\Big|\int_0^t\langle \eta(s),\dd \tilde{W}(s)\rangle\Big|^p
\le C_p \EE\Big(\int_0^{T}\|\tilde{Q}^{\frac12}\eta(s)\|^2\,\dd s\Big)^{\frac{p}{2}},
~p\ge 2.
\end{equation}

Also, if $Y$ is an $H$-valued Gaussian random variable with
covariance operator $\tilde{Q}$, then, by \cite[Corollary 2.17]{DPZ},
we can bound its $p$-th moments via its covariance operator as
\begin{equation}\label{eq:es1a}
\EE\|Y\|^{2p}\le C_p (\mathbb{E}\|Y\|^2)^{p}
=C_p (\Tr \tilde{Q})^{p}=\|\tilde{Q}^{\frac12}\|_{\HS}^{2p}.
\end{equation}
We will repeatedly apply this to the It\^o integral $\int_s^tR\,\dd
W(r)$, where $R$ is a constant, possibly unbounded, operator on $H$ and
$W$ is a $Q$-Wiener process. Then
\eqref{eq:es1a} reads
\begin{equation}\label{eq:es1b}
\EE\Big\|\int_s^tR\,\dd W(r)\Big\|^{2p}\leq C_p(t-s)^p\|RQ^{1/2}\|_{\HS}^{2p}.
\end{equation}
If $p=1$,  the inequality in \eqref{eq:es1b} becomes an equality with $C=1$.
Moreover, the inequality
\begin{equation}\label{eq:psum}
\Big|\sum_{j=M}^Ka_j\Big|^p\leq |M-K+1|^{p-1}\sum_{j=M}^K|a_j|^p
\end{equation}
will be frequently utilised. This is a direct consequence of
H\"older's inequality.

The existence of solutions to \eqref{eq:sac} and their regularity has
been studied in, for example, \cite{Liu} (variational solution).  
We summarise some known results in the following theorem and we also refer to
the discussion preceding and after \cite[Proposition 3.1]{KLLAC} for
more details. To briefly describe the notion of a variational solution we consider
the mapping 
$
\tilde A: V\to V'
$
defined by
\begin{align*}
\langle\tilde Au,v\rangle 
=-\langle \nabla u, \nabla v \rangle -\langle f(u), v\rangle, \quad u,v\in V. 
\end{align*}
A continuous, $H$-valued, $\mathcal{F}_t$-adapted process
$X=\{X(t)\}_{t\in [0,T]}$ is called a \textit{variational solution} of
\eqref{eq:sac}, if for its $\dd t\otimes \mathbb{P}$ equivalence class
$\hat{X}$ we have
$\hat{X}\in L^\alpha([0,T]\times \Omega, \dd t\otimes \mathbb{P}; V
)$, for some $\alpha \ge 2$, and 
\begin{align*}
X(t)=X_0+\int_0^t \tilde A\hat X(s)\,\dd s+ W(t), \quad t\in [0,T], 
\ \text{$\mathbb{P}$-as.}  
\end{align*}

\begin{thm}\label{thm:existenceandregularityofX}
If $\|A^{\frac12}Q^{\frac12}\|_{\HS}<\infty$ and
$\EE\|X_0\|_1^p<\infty$ for some $p\ge 2$, then there is a
unique variational solution $X=\{X(t)\}_{t\in [0,T]}$ of \eqref{eq:sac}. Furthermore,
there is $C_T>0$ such that
\begin{equation}\label{eq:esup}
\EE\sup_{t\in [0,T]}\|X(t)\|^p+\EE\sup_{t\in [0,T]}\|X(t)\|_{1}^p\le C_T.
\end{equation}
In addition, $X$ is also a mild solution; that is, it satisfies, $\mathbb{P}$-almost surely, the equation
\begin{equation}\label{eq:mildsac}
X(t)=E(t)X_0-\int_0^tE(t-s)f(X(s))\,\dd s+\int_0^t E(t-s)\,\dd W(s),
\quad t\in[0,T].
\end{equation}
\end{thm}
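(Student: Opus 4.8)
The plan is to obtain the existence, uniqueness and the $H$-valued part of the bound from the general theory of monotone SPDEs, to bootstrap the $\dH^1$-regularity and the higher moments by an energy estimate on a Galerkin approximation, and finally to deduce the mild formulation from a standard equivalence result. I would first recast \eqref{eq:sac} in variational form on the Gelfand triple $V\hookrightarrow H\hookrightarrow V'$, which is admissible in dimension $d\le3$ because $V=H^1_0(\cD)\hookrightarrow L^6(\cD)$, so $f(v)=v^3-\beta^2v\in H\hookrightarrow V'$ for $v\in V$, with drift $B(v)=-Av-f(v)$. Hemicontinuity and polynomial growth of $B$ are immediate from the polynomial structure of $f$; weak monotonicity up to a shift follows from $a(u-v,u-v)\ge0$ together with the one-sided Lipschitz bound \eqref{eq:onesided1}, giving $2(B(u)-B(v),u-v)\le2\beta^2\|u-v\|^2$; and coercivity follows from the coercivity of $a$ and the dissipativity \eqref{eq:diss1}, since $2(B(v),v)\le-2\kappa_0\|v\|_1^2+2\beta^2\|v\|^2$. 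The general theorem (as in \cite{Liu}, or the discussion around \cite[Proposition 3.1]{KLLAC}) then produces a unique variational solution $X$ with $X\in L^2(\Omega;C([0,T];H))\cap L^2(\Omega\times(0,T);V)$.

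To promote this to the bound \eqref{eq:esup}, I would run an energy estimate on the spectral Galerkin approximation $X^{(n)}$, the projection of \eqref{eq:sac} onto $\mathrm{span}\{e_1,\dots,e_n\}$ by $P_n$, for which Itô's formula is rigorous since $X^{(n)}$ solves a finite-dimensional SDE. Applying Itô's formula to $\|X^{(n)}(t)\|_1^2=\|A^{1/2}X^{(n)}(t)\|^2$ and using that $P_n$ commutes with $A$ gives
\begin{equation*}
\dd\|X^{(n)}\|_1^2 = -2\|X^{(n)}\|_2^2\,\dd t - 2\langle f(X^{(n)}),X^{(n)}\rangle_1\,\dd t + 2\langle AX^{(n)},\dd W\rangle + \Tr\bigl(A^{1/2}P_nQP_nA^{1/2}\bigr)\,\dd t .
\end{equation*}
Here the trace term is bounded by $\|A^{1/2}Q^{1/2}\|_\HS^2$, the term $-2\langle f(X^{(n)}),X^{(n)}\rangle_1$ is at most $2\beta^2\|X^{(n)}\|_1^2$ by the dissipativity \eqref{eq:diss2}, and the $-2\|X^{(n)}\|_2^2$ term is discarded (or kept, to obtain in addition a uniform bound on $\EE\bigl(\int_0^T\|X^{(n)}\|_2^2\,\dd s\bigr)^{p/2}$). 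Taking suprema, raising to the power $p/2$, estimating the martingale term by the Burkholder--Davies--Gundy inequality \eqref{eq:rbdg} with quadratic variation controlled by $\|A^{1/2}Q^{1/2}\|_\HS^2\int_0^t\|X^{(n)}\|_1^2\,\dd s$ (using $\|Q^{1/2}AX^{(n)}\|\le\|A^{1/2}Q^{1/2}\|_\HS\|X^{(n)}\|_1$ via \eqref{eq:hs}), then Young's inequality to absorb and Gronwall's lemma, yields $\EE\sup_{t\le T}\|X^{(n)}(t)\|_1^p\le C_T$ uniformly in $n$; the corresponding $H$-bound is obtained identically using \eqref{eq:diss1} in place of \eqref{eq:diss2}. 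Passing to the limit $n\to\infty$ via weak and weak-$*$ compactness, lower semicontinuity of the norms, and uniqueness to identify the limit with $X$, gives \eqref{eq:esup}.

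For the mild formulation \eqref{eq:mildsac}: by \eqref{eq:esup} and the growth bound \eqref{eq:fgrowth} we have $f(X)\in L^2(\Omega\times(0,T);H)$, so in particular $\int_0^T\|f(X(s))\|^2\,\dd s<\infty$ almost surely and the deterministic convolution in \eqref{eq:mildsac} is well defined. Testing the variational identity with functions of the form $s\mapsto E(t-s)\varphi$, $\varphi\in D(A)$, and applying the deterministic and stochastic Fubini theorems, one recovers \eqref{eq:mildsac}; this is the standard equivalence of variational and mild solutions, carried out in detail in \cite[Proposition 3.1]{KLLAC} and \cite{DPZ}.

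The step I expect to be the main obstacle is the rigorous derivation of the $\dH^1$ (and $\dH^2$) a priori bounds: the Itô formula above is only formal for the variational solution itself, so it must be run on the Galerkin system and then transferred to the limit, and one has to check carefully that the cubic nonlinearity contributes only through the sign-favourable dissipativity \eqref{eq:diss2} --- which is exactly where the restriction $d\le3$, via $\dH^1\hookrightarrow L^6$, and the hypothesis $\|A^{1/2}Q^{1/2}\|_\HS<\infty$ enter.
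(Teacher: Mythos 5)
The paper does not actually prove Theorem~\ref{thm:existenceandregularityofX}: it states it as a summary of results available in the literature and refers the reader to Liu's paper \cite{Liu} on well-posedness under a Lyapunov condition, and to the discussion around \cite[Proposition 3.1]{KLLAC}. Your proposal is therefore not competing with a proof in the paper but rather reconstructing the contents of those citations, and the line you take (variational framework, Galerkin energy estimates for the $\dH^1$-regularity, and Fubini-type arguments for the variational-implies-mild step) is indeed the same line taken in the cited works.

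One point you should be more careful about: you state hemicontinuity, monotonicity up to a shift, polynomial growth and coercivity, and then invoke ``the general theorem.'' The classical Krylov--Rozovskii / Pr\'ev\^ot--R\"ockner monotone SPDE framework requires the coercivity exponent and the growth exponent to match. Here $f(v)=v^3-\beta^2 v$ gives $\|B(v)\|_{V'}\lesssim 1+\|v\|_V^3$ (growth exponent $\alpha-1=3$), but coercivity only gives $2(B(v),v)\le -2\kappa_0\|v\|_1^2 + 2\beta^2\|v\|^2$ (exponent $2$). This mismatch is exactly why the classical framework does not apply directly and why the paper cites \cite{Liu}, which replaces standard coercivity by a Lyapunov-type condition combined with local monotonicity. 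Your monotonicity estimate $2(B(u)-B(v),u-v)\le 2\beta^2\|u-v\|^2$ is what the local (in fact global, up to shift) monotonicity condition requires, so the argument goes through, but it is worth saying explicitly that one is invoking the generalised framework rather than the classical one. The remaining steps --- the Galerkin energy estimate with the martingale controlled by $\|Q^{1/2}AX^{(n)}\|\le\|A^{1/2}Q^{1/2}\|_\HS\|X^{(n)}\|_1$ and the sign of the nonlinear term handled by \eqref{eq:diss1}--\eqref{eq:diss2}, and the passage to the mild form by testing against $s\mapsto E(t-s)\varphi$ --- are standard and correct.
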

Note that, as mentioned in the introduction, we have
$H^1_0(\mathcal D)\subset D_f$ and hence, by \eqref{eq:esup}, the term
$f(X(s))$ in \eqref{eq:mildsac} is well defined.  We shall also
utilise the following versions of Gronwall's lemma in continuous and
discrete time. For a proof of the former, see \cite{Stig} and for the
latter, see \cite{MR2744841}.

\begin{lem}[Generalised Gronwall lemma]\label{lem:gronwallCont}
Let $u\in L_1([0,T],\R)$ be non-negative. If $\alpha, C_1,C_2> 0$ and
\begin{equation*}
u(t)\leq C_1+C_2\int_0^t(t-s)^{\alpha-1}u(s)\,\dd s,\quad 0< t\leq T,
\end{equation*}
then there exists
${C}={C}(T,C_2,\alpha)>0$ such that
\begin{equation*}
u(t)\leq C_1{C},\quad 0\leq t\leq T.
\end{equation*}
\end{lem}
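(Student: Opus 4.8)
The statement is a classical fractional Gronwall inequality. The plan is to iterate the integral inequality and sum the resulting convolution kernels, exploiting that the iterated kernels gain more and more integrability. Write $K(t) = C_2 t^{\alpha-1}$, so the hypothesis reads $u(t) \le C_1 + (K * u)(t)$, where $*$ is convolution on $[0,t]$. Substituting this bound into itself $n$ times gives
\begin{equation*}
u(t) \le C_1 \sum_{m=0}^{n-1} (K^{*m} * 1)(t) + (K^{*n} * u)(t),
\end{equation*}
where $K^{*m}$ denotes the $m$-fold convolution power (with $K^{*0}*u = u$). The key computation is that convolution powers of $t^{\alpha-1}$ are again power functions: using the Beta integral $\int_0^t (t-s)^{a-1}s^{b-1}\,\dd s = B(a,b)\,t^{a+b-1}$ one shows by induction that
\begin{equation*}
K^{*m}(t) = \frac{(C_2\Gamma(\alpha))^m}{\Gamma(m\alpha)}\, t^{m\alpha-1},
\end{equation*}
and hence $(K^{*m}*1)(t) = (C_2\Gamma(\alpha))^m\, t^{m\alpha}/\Gamma(m\alpha+1)$.

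With this in hand, the first term is bounded by $C_1 \sum_{m=0}^{\infty} (C_2\Gamma(\alpha))^m T^{m\alpha}/\Gamma(m\alpha+1)$, a convergent series (a Mittag--Leffler function evaluated at $C_2\Gamma(\alpha)T^\alpha$), which defines the constant $C = C(T,C_2,\alpha)$. For the remainder term, one needs to show $(K^{*n}*u)(t)\to 0$; since $u\in L_1([0,T])$ and, for $n$ large enough that $n\alpha \ge 1$, the kernel $K^{*n}$ is bounded on $[0,T]$ by $(C_2\Gamma(\alpha))^n T^{(n-1)\alpha}/\Gamma(n\alpha) \cdot$ (a constant depending on $\alpha$), we get $(K^{*n}*u)(t) \le \|u\|_{L_1[0,T]} \cdot \|K^{*n}\|_{L_\infty[0,T]} \to 0$ as $n\to\infty$ because $\Gamma(n\alpha)$ grows faster than any geometric factor. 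Letting $n\to\infty$ yields $u(t) \le C_1 C$ for a.e.\ $t$, and then for all $t\in[0,T]$ after adjusting on a null set (or noting the right-hand side is the constant $C_1 C$).

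The main obstacle is purely technical bookkeeping: verifying the convolution-power identity by induction (the inductive step is exactly one Beta-function integral) and checking that the Mittag--Leffler-type series converges and that the remainder vanishes. There is no conceptual difficulty — the one point requiring a little care is that $u$ is only assumed $L_1$, not continuous or bounded, so the remainder estimate must be done in $L_1$–$L_\infty$ duality rather than pointwise, and the final conclusion holds for almost every $t$ (which suffices, since one may take $C_1 C$ as the asserted bound and the inequality $u(t)\le C_1 C$ then extends to all $t$ by the same a.e.\ argument applied after noting the bound is constant). Since the paper only cites \cite{Stig} for this, I would in practice just reproduce this standard argument in a line or two.
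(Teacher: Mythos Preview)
The paper does not actually prove this lemma; it merely cites \cite{Stig} (Elliott and Larsson, 1992) for the proof. Your iteration argument via convolution powers of $t^{\alpha-1}$ and the resulting Mittag--Leffler series is the standard proof of this fractional Gronwall inequality and is correct; it is essentially what one finds in the cited reference and in the broader literature.

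One minor remark: your caution about the conclusion holding only a.e.\ is unnecessary. The hypothesis is assumed pointwise for every $t\in(0,T]$, and after substituting it into itself the iterated inequality remains pointwise; once $n\alpha\ge 1$ the kernel $K^{*n}$ is continuous and bounded on $[0,T]$, so the remainder bound $(K^{*n}*u)(t)\le \|K^{*n}\|_{L_\infty}\|u\|_{L_1}\to 0$ is also pointwise in $t$. Hence $u(t)\le C_1 C$ for every $t\in(0,T]$, and the case $t=0$ is trivial since the $m=0$ term of the series gives $C\ge 1$.
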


\begin{lem}[Discrete Gronwall lemma]\label{lem:gronwallDiscrete}
  Let $\{a_j\}_{j=0}^{N}$ be a real non-negative sequence.  If $C_1,\
  C_2> 0$ and 
\begin{equation*}
a_n\leq C_1+C_2\sum_{j=0}^{n-1}a_j, \ 0\leq n\leq N,
\quad \text{then }
a_n\leq C_1\ee^{C_2n}, \quad 0\leq n\leq N.
\end{equation*}
\end{lem}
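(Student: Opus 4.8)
The plan is to establish the sharper geometric bound $a_n\le C_1(1+C_2)^n$ by a finite induction on $n$, and then to deduce the stated exponential bound from the elementary inequality $1+x\le\ee^{x}$, valid for all $x\ge 0$.

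First I would treat the base case $n=0$. Since the sum $\sum_{j=0}^{-1}a_j$ is empty, the hypothesis reads $a_0\le C_1=C_1(1+C_2)^0$, which is the claim for $n=0$. For the inductive step, assume $a_j\le C_1(1+C_2)^j$ for every $j$ with $0\le j\le n-1$. Inserting this into the hypothesis and summing the resulting partial geometric series,
\[
a_n\le C_1+C_2\sum_{j=0}^{n-1}a_j
\le C_1+C_1C_2\sum_{j=0}^{n-1}(1+C_2)^j
= C_1+C_1C_2\,\frac{(1+C_2)^n-1}{C_2}
= C_1(1+C_2)^n ,
\]
which closes the induction and yields $a_n\le C_1(1+C_2)^n$ for all $0\le n\le N$. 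Finally, raising $1+C_2\le\ee^{C_2}$ to the $n$-th power gives $(1+C_2)^n\le\ee^{C_2 n}$, whence $a_n\le C_1\ee^{C_2 n}$, as required.

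There is essentially no obstacle here: the argument is a routine finite induction. The only mild points of care are the empty-sum convention in the base case, the exact evaluation of the partial geometric sum — which is precisely what makes the constants telescope down to $C_1(1+C_2)^n$ rather than something larger — and the observation that the passage to the exponential form is slightly lossy, so one may just as well retain the sharper bound $a_n\le C_1(1+C_2)^n$ if that is more convenient in the applications.
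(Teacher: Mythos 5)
Your proof is correct. The paper does not prove this lemma itself but instead cites \cite{MR2744841}; your argument is a complete, self-contained, and entirely standard proof by finite induction, establishing the sharper geometric bound $a_n\le C_1(1+C_2)^n$ and then weakening it via $1+C_2\le\ee^{C_2}$, so it is a perfectly acceptable substitute for the cited reference.
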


We will use the notation $P_j$ to denote a positive polynomial of
degree $j$, that is, $P_j(x)\geq 0$ whenever the argument $x\in\IR^N$ has
positive components.

\section{Reformulation of the problem}\label{sec:reform}

For sufficiently small $\ts_0$, to be made precise in
Lemma~\ref{lem:fprop}, equations \eqref{eq:ss1a}--\eqref{eq:ss1c} are
all uniquely solvable if $\ts<\ts_0$ and thus, for $j=0,1,\dots, N-1$,
\begin{align}
X^0&=X_0,\nonumber\\
Y^j&=\Rk X^j,  \quad \text{where $\Rk:=(I+\textstyle{\hts} A)^{-1}$,}
\label{eq:ss2a}\\
Z^j&= \Jk Y^j,  \quad\ \, \text{where $\Jk(x):=(I+\ts f)^{-1} x$,}
\label{eq:ss2b}\\
X^{j+1}&=\Rk Z^j+\Rk\Delta W^j. \label{eq:ss2c}
\end{align}
We insert \eqref{eq:ss2a} into \eqref{eq:ss2b} to get
$Z^j= \Jk \Rk X^j$.  We then substitute this and \eqref{eq:ss2a} into
\eqref{eq:ss1b}, to get $Z^j=\Rk X^j-\ts f(\Jk\Rk X^j)$.
If this is substituted into \eqref{eq:ss2c}, we arrive at
\begin{equation}\label{eq:onestep}
\begin{aligned}
X^{j+1}&=\Rk^2 X^j-\ts\Rk f(\Jk\Rk X^j)+\Rk\Delta W^j\\
&=X^{j}-\ts A(I+\tfrac{\ts}4A)\Rk^2\Xj-\ts\Rk f(\Jk\Rk X^j)+\Rk\Delta W^j,
\end{aligned}
\end{equation}
where we used the identity  $\Rk^2=I-\ts A(I+\frac\ts4A)\Rk^2$. With the definitions
\begin{align*}
  \begin{aligned}
M_k&=(I+\tfrac\ts4A),
\quad &\Ak&=AM_k\Rk^2, \\
\fk(\cdot)&=f(\Jk \cdot),
\quad &\Fk(\cdot)&=\Rk\fk(\Rk\cdot),
  \end{aligned}
\end{align*}
we can write \eqref{eq:onestep} in the form
\begin{equation*}
X^{j+1}=X^{j}-\ts \Ak X^{j}-\ts\Fk(X^j)+\Rk\Delta W^j.
\end{equation*}
This may be viewed as the fully explicit Euler scheme, or the
Euler--Maruyama scheme, for the equation
\begin{equation}\label{eq:pspde}
\dd\Xtk(t)+\Ak\Xtk(t)\,\dd t+\Fk(\Xtk(t))\,\dd t=\Rk\,\dd W(t).
\end{equation}

Note that the operator $-\Ak$ is self-adjoint and negative definite
and hence it is the generator of an analytic semigroup
$\Ek(t)=\ee^{-t\Ak}$, and \eqref{eq:anal0} holds with $A$ and $E$
replaced by $\Ak$ and $\Ek$, respectively. As we shall see, $\Ak$ is
also a bounded operator on $H$ and $\Fk$ is Lipschitz continuous and
thus \eqref{eq:pspde} admits unique strong, weak, mild, and
variational solutions, if also $\|\Rk
Q^{1/2}\|_{\HS}<\infty$.
Further, such solutions coincide, see \cite[Appendix F]{PR}. In
particular, \eqref{eq:pspde} may be written in its mild form,
\begin{equation}\label{eq:mildpert}
\Xtk(t)=\Ek(t)X_0-\int_0^t\Ek(t-s)\Fk(\Xtk(s))\,\dd s+\int_0^t\Ek(t-s)\Rk\,\dd W(s)
\end{equation}
or in its strong form
\begin{equation*}
\Xtk(t)=-\int_0^t\Ak\Xtk(s)+\Fk(\Xtk(s))\,\dd s+\int_0^t\Rk\,\dd W(s).
\end{equation*}

\subsection{Properties of the new operators}

The operators $\Rk$, $\Mk$, and $\Ak$ introduced above are
self-adjoint positive definite and they commute with $A$.  In fact,
$\Mk$ is coercive in the sense that
\begin{equation}\label{eq:Mkco}
\|\Mk x\|\geq C\|x\|, \quad x\in \dot{H}^2,
\end{equation}
and the operators $\Rk$, $\Mk\Rk$ satisfy
\begin{align}\label{eq:resol}
\|\Rk x\|&\leq \|x\|,\quad x\in H, \\
\label{eq:MkRk}
\tfrac12\|x\|&\leq\|\Mk\Rk x\|\leq\|x\|,\quad x\in H,
\end{align}
and hence, for $\Ak$,
\begin{equation}\label{eq:AAk}
\tfrac12\|A\Rk x\|\leq\|\Ak x\|\leq\| A\Rk x\|,\quad x\in H.
\end{equation}
The operator $A\Rk$ is (essentially) the Yosida approximation of $A$
with the bounds
\begin{equation}\label{eq:akb}
\|A\Rk x\|\leq \ts^{-1} C\|x\|, \quad \|A\Rk x\|\leq  C\|x\|_2,
\end{equation}
and the interpolated version
\begin{equation}\label{eq:arkgk0}
  \|A\Rk x\|\leq \ts^{-s/2} C\|x\|_{2-s}, \quad 0\leq s\leq 2.
\end{equation}
An immediate consequence is that
\begin{equation}\label{eq:arkg}
\|A^{\alpha/2}\Rk x\|\leq\ts^{-s/2} C\|x\|_{\alpha-s},  \quad 0\leq
s\leq 2,\ \alpha\in \IR.
\end{equation}
Note also that \eqref{eq:AAk} and \eqref{eq:akb} show that, indeed,
$\Ak\in \mathcal{L}(H)$. We will use \eqref{eq:arkg}, in particular,
with $\alpha=s=1$, that is,
\begin{equation}\label{eq:AhRk}
\|A^{1/2}\Rk x\|\leq\ts^{-1/2} C\|x\|.
\end{equation}
If $\ts_1,\ts_2\geq 0$ are two time-steps, then we have that
\begin{equation}\label{eq:Rkdiff}
R_{\frac{\ts_1}2}-R_{\frac{\ts_2}2}=\tfrac12(\ts_2-\ts_1)A
R_{\frac{\ts_1}2}R_{\frac{\ts_2}2}
\end{equation}
 and, assuming $\ts_1\geq \ts_2$,
\begin{equation}\label{eq:Rkdiffnorm}
\|(R_{\frac{\ts_1}2}-R_{\frac{\ts_2}2})x\|\leq
C\ts_1^{s/2}\|R_{\frac{\ts_2}{2}}x\|_{s}\leq
C\ts_1^{s/2}\|x\|_{s},\quad 0\leq s\leq 2.
\end{equation}

We note that if $A^{1/2}Q^{1/2}$ is Hilbert--Schmidt, then it is
bounded according to \eqref{eq:hs} and also $A^{1/2}\Rk Q^{1/2}$ is
Hilbert--Schmidt, hence bounded. More precisely, the following result
holds.

\begin{lem}\label{lem:qarkbnd}
For all $k\ge 0$ we have that  $\|Q^{1/2}A^{1/2}\Rk\|\le \|A^{1/2}Q^{1/2}\|_{\HS}$.
\end{lem}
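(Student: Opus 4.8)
The plan is to reduce the statement to the three operator-norm inequalities collected in \eqref{eq:hs}, the identity $\|T\|_{\HS}=\|T^*\|_{\HS}$ contained in \eqref{eq:tr}, and the contractivity bound $\|\Rk\|\le 1$ from \eqref{eq:resol}. First, if $\|A^{1/2}Q^{1/2}\|_{\HS}=\infty$ there is nothing to prove, so I would assume this quantity is finite; then by the first inequality in \eqref{eq:hs} the operator $A^{1/2}Q^{1/2}$ is bounded, and since $Q^{1/2}$ is bounded its range lies in $D(A^{1/2})$, which is what makes the manipulations below legitimate despite the unboundedness of $A^{1/2}$.

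Next I would record that for $\ts>0$ the operator $A^{1/2}\Rk$ is bounded and self-adjoint, being the function $\lambda\mapsto\lambda^{1/2}(1+\tfrac{\ts}{2}\lambda)^{-1}$ of $A$; hence $Q^{1/2}A^{1/2}\Rk=Q^{1/2}\bigl(A^{1/2}\Rk\bigr)$ is a product of bounded self-adjoint operators whose adjoint is $\bigl(A^{1/2}\Rk\bigr)Q^{1/2}=\Rk A^{1/2}Q^{1/2}$ (the last equality uses that $A^{1/2}$ and $\Rk$ commute on $D(A^{1/2})\supseteq\operatorname{ran}Q^{1/2}$). The degenerate value $\ts=0$, where $\Rk=I$, is treated the same way, noting that $(Q^{1/2}A^{1/2})^{*}$ extends the everywhere-defined bounded operator $A^{1/2}Q^{1/2}$ and therefore equals it.

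The estimate itself is then the short chain
\[
  \|Q^{1/2}A^{1/2}\Rk\|\le\|Q^{1/2}A^{1/2}\Rk\|_{\HS}
  =\|(Q^{1/2}A^{1/2}\Rk)^{*}\|_{\HS}
  =\|\Rk A^{1/2}Q^{1/2}\|_{\HS}
  \le\|\Rk\|\,\|A^{1/2}Q^{1/2}\|_{\HS}
  \le\|A^{1/2}Q^{1/2}\|_{\HS},
\]
where the steps use, respectively, the first inequality of \eqref{eq:hs}, the identity $\|T\|_{\HS}=\|T^{*}\|_{\HS}$ from \eqref{eq:tr}, the third inequality of \eqref{eq:hs} with $S=\Rk$ and $T=A^{1/2}Q^{1/2}$, and finally $\|\Rk\|\le 1$ from \eqref{eq:resol}. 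Alternatively, one can avoid adjoints of unbounded operators altogether by using the spectral decomposition of $A$: writing $\|Q^{1/2}A^{1/2}\Rk\|^{2}=\|Q^{1/2}A\Rk^{2}Q^{1/2}\|$ and exploiting $0\le A\Rk^{2}\le A$ in the operator order (because $\lambda(1+\tfrac{\ts}{2}\lambda)^{-2}\le\lambda$) to conclude $\|Q^{1/2}A\Rk^{2}Q^{1/2}\|\le\|Q^{1/2}AQ^{1/2}\|=\|A^{1/2}Q^{1/2}\|^{2}\le\|A^{1/2}Q^{1/2}\|_{\HS}^{2}$.

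The computation is genuinely short, so there is no serious obstacle; the only point demanding a little care is the bookkeeping for the unbounded factor $A^{1/2}$, namely verifying that the adjoint of $Q^{1/2}A^{1/2}\Rk$ is indeed $\Rk A^{1/2}Q^{1/2}$. As indicated, this is automatic once one is in the only non-vacuous case, where $\|A^{1/2}Q^{1/2}\|_{\HS}<\infty$ forces $A^{1/2}Q^{1/2}$ to be bounded.
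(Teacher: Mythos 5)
Your argument is correct and follows exactly the same chain of inequalities as the paper's proof, using the Hilbert--Schmidt domination of the operator norm, the adjoint invariance of the HS norm, and the contractivity of $\Rk$; the extra remarks about the domain bookkeeping for $A^{1/2}$ and the alternative spectral-calculus route are sound but not needed.
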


\begin{proof}
By \eqref{eq:hs} and \eqref{eq:tr} we have that
\begin{equation*}
\begin{aligned}
\|Q^{1/2}A^{1/2}\Rk\|&\leq \|Q^{1/2}A^{1/2}\Rk\|_{\HS}=
\|(Q^{1/2}A^{1/2}\Rk)^*\|_{\HS}=\|\Rk A^{1/2} Q^{1/2}\|_{\HS}\\
&\leq \|\Rk\| \| A^{1/2} Q^{1/2}\|_{\HS}\leq \| A^{1/2}
Q^{1/2}\|_{\HS}.
\end{aligned}
\end{equation*}
The second equality is motivated by the self-adjointness of all
involved operators and the boundedness in $H$ of   $A^{1/2} Q^{1/2}$
and $\Rk$.
\end{proof}

We will make use of the following
two deterministic error estimates.

\begin{lem}\label{thm:semierr}
Let $E(t)$ and $\Ek(t)$ be the analytic semigroups generated by $-A$
and $-\Ak$, respectively. Then for $x\in \dot{H}^{s-r}$, with $0\leq
r\leq s\leq 2$, we have that
\begin{align}
\|(E(t)-\Ek(t)\Rk)x\|&\leq C k^{\frac{s}{2}}t^{-\frac r2}\|x\|_{s-r},\quad t>0,
\label{eq:semierr2}\\
\|(E(t)-\Ek(t))x\|&\leq C k^{\frac{s}2}\|x\|_{s},\quad t>0.
\label{eq:semierr1}
\end{align}
\end{lem}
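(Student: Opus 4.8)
The estimate \eqref{eq:semierr1} is the special case $r=0$ of \eqref{eq:semierr2} composed with the triangle inequality and the resolvent bound \eqref{eq:resol}: indeed $E(t)-\Ek(t) = (E(t)-\Ek(t)\Rk) + \Ek(t)(\Rk - I)$, and since $\Rk-I = -\hts A\Rk$ we have $\|(\Rk-I)x\| = \hts\|A\Rk x\| \le C\ts\|x\|_2 \le Ck^{s/2}\|x\|_s$ for $0\le s\le 2$ by \eqref{eq:akb} and interpolation (for $s<2$ use $\|A\Rk x\|\le C\ts^{-s/2}\|x\|_{2-s}$... more directly, $\|(\Rk-I)x\| = \tfrac{\ts}{2}\|A\Rk x\| \le C\ts^{s/2}\|x\|_s$ from \eqref{eq:arkg} with $\alpha = 2$, $s$ replaced appropriately), while $\|\Ek(t)\|\le 1$ since $-\Ak$ generates a contraction semigroup. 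So the work is entirely in \eqref{eq:semierr2}.

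For \eqref{eq:semierr2} the standard device is to write the difference of the two semigroups as an integral of the difference of the generators against the semigroups, exploiting that $A$, $\Rk$, $M_k$, $\Ak$ all commute. Since both $-A$ and $-\Ak$ are self-adjoint and negative definite with common spectral decomposition $\{e_\ell,\lambda_\ell\}$, it is cleanest to argue by the spectral theorem: on the $\ell$-th eigenspace, $E(t)$ acts as $\ee^{-\lambda_\ell t}$ and $\Ek(t)\Rk$ acts as $\mu_\ell^{-1}\ee^{-\nu_\ell t}$ where $\nu_\ell = \lambda_\ell(1+\tfrac{\ts}{4}\lambda_\ell)(1+\tfrac{\ts}{4}\lambda_\ell)^{-2} = \lambda_\ell(1+\tfrac\ts4\lambda_\ell)^{-1}$ is the eigenvalue of $\Ak$ and $\mu_\ell = 1 + \tfrac\ts4\lambda_\ell$ is that of $\Rk^{-1}$... wait, let me recompute: $\Rk = (I+\hts A)^{-1}$ so its eigenvalue is $(1+\tfrac\ts2\lambda_\ell)^{-1}$. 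Thus it suffices to establish the scalar inequality
\begin{equation*}
\Big| \ee^{-\lambda t} - (1+\tfrac\ts2\lambda)^{-1}\ee^{-\nu t}\Big| \le C\,\ts^{s/2}\,t^{-r/2}\,\lambda^{(s-r)/2},\qquad \nu = \tfrac{\lambda(1+\tfrac\ts4\lambda)}{(1+\tfrac\ts2\lambda)^2},
\end{equation*}
uniformly in $\lambda\ge 0$, $t>0$, $0\le r\le s\le 2$, and then sum $\|(E(t)-\Ek(t)\Rk)x\|^2 = \sum_\ell |\cdots|^2 \langle x,e_\ell\rangle^2$ against $\sum_\ell \lambda_\ell^{s-r}\langle x,e_\ell\rangle^2 = \|x\|_{s-r}^2$. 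The scalar bound is checked by splitting into $\ts\lambda \le 1$ and $\ts\lambda > 1$: in the first regime one Taylor-expands $\nu - \lambda = O(\ts\lambda^2)$ and $(1+\tfrac\ts2\lambda)^{-1} - 1 = O(\ts\lambda)$ and uses $\ee^{-\lambda t}(\lambda t)^\alpha \le C_\alpha$ type bounds \eqref{eq:anal0} to absorb powers of $\lambda$ against $t^{-r/2}$; in the second regime both terms are individually small — $(1+\tfrac\ts2\lambda)^{-1} \le 2(\ts\lambda)^{-1} \le 2(\ts\lambda)^{-s/2}$ and, crucially, $\nu \ge c\ts^{-1}$ so $\ee^{-\nu t}$ is harmless and $\ee^{-\lambda t}\lambda^{r/2}t^{r/2}\le C$, again by \eqref{eq:anal0}.

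Alternatively, and perhaps more in keeping with the operator-theoretic style, I would use the identity
\begin{equation*}
E(t) - \Ek(t)\Rk = \big(E(t)-\Ek(t)\big) + \Ek(t)(I-\Rk) = \int_0^t \Ek(t-\sigma)(\Ak - A)E(\sigma)\,\dd\sigma + \Ek(t)(I-\Rk),
\end{equation*}
and estimate $\|\Ak - A\| $ restricted to appropriate fractional scales: from $A - \Ak = A(I - M_k\Rk^2) = A\cdot \ts A(I+\tfrac\ts4 A)\Rk^2 = \ts A^2 M_k\Rk^2$ (using the identity $\Rk^2 = I - \ts A M_k\Rk^2$ quoted after \eqref{eq:onestep}), one gets $\|(A-\Ak)E(\sigma)x\| \le C\ts\|A^2 \Rk^2 E(\sigma)x\|$ and then distributes the smoothing: $\ts\|A^{2}\Rk^{2}E(\sigma)x\| \le \ts \cdot \ts^{-1+s/2}C \,\|A E(\sigma)x\|_{?}$ via \eqref{eq:arkg}, arranging that one factor of $\ts$ is converted to $\ts^{s/2}$ using a half-power of $\Rk$-smoothing and the remaining $A$-powers hit $E(\sigma)$ to produce $\sigma^{-(\cdots)}$, which is then integrated; the $t^{-r/2}$ with $r>0$ comes from letting up to $r$ powers of $A$ act on $E(t-\sigma)$ or on the outermost $E(\sigma)$ near the integrable singularity. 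The boundary term $\Ek(t)(I-\Rk)$ is handled exactly as in the reduction of \eqref{eq:semierr1}: $\|\Ek(t)(I-\Rk)x\| \le \|(I-\Rk)x\| \le C\ts^{s/2}\|x\|_s \le C\ts^{s/2}t^{-r/2}\cdot(t^{r/2}\|x\|_s)$ — here one must be a little careful since we want $\|x\|_{s-r}$ on the right, not $\|x\|_s$, so for this term one instead writes $I - \Rk = \hts A\Rk$ and uses \eqref{eq:arkg} together with $\|A^{r/2}\Ek(t)\|\le Ct^{-r/2}$ to move $r$ derivatives onto $\Ek(t)$.

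The main obstacle is the bookkeeping of fractional powers: making sure that in every regime exactly $s$ half-powers of $\ts$ are produced (never more, to keep the rate; never fewer, for the bound to hold) while the leftover powers of $A$ are split between the singular factor $E(t-\sigma)^{}$ or $\Ek(t-\sigma)$ near $\sigma=t$ and the smooth factor $E(\sigma)$ near $\sigma=0$ so that the resulting time integral $\int_0^t (t-\sigma)^{-a}\sigma^{-b}\,\dd\sigma$ has $a,b<1$ and evaluates (by the Beta integral) to $Ct^{1-a-b} = Ct^{-r/2}$. The condition $r\le s\le 2$ is exactly what guarantees this is possible. The spectral-calculus route sidesteps most of this by reducing everything to one explicit scalar inequality, so I would present that one as the main argument.
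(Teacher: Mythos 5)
Your proposal is correct and takes essentially the same route as the paper: reduce to the scalar estimate $|\ee^{-\lambda t}-(1+\tfrac{k\lambda}{2})^{-1}\ee^{-\nu t}|\le Ck^{s/2}t^{-r/2}\lambda^{(s-r)/2}$ (with $\nu=\lambda(1+\tfrac{k\lambda}{4})(1+\tfrac{k\lambda}{2})^{-2}$) by the spectral theorem, prove it by splitting according to the size of $k\lambda$ and using mean-value/Taylor arguments together with $\ee^{-x}x^{\alpha}\le C_\alpha$, and then deduce \eqref{eq:semierr1} from \eqref{eq:semierr2} by writing $E(t)-\Ek(t)=(E(t)-\Ek(t)\Rk)+\Ek(t)(\Rk-I)$ and applying the Yosida bound $\|(I-\Rk)x\|\le Ck^{s/2}\|x\|_s$. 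The paper's case split is $k\lambda\ge 2$ (further subdivided by $t<k$ vs.\ $t\ge k$, the former trivially via $|F|\le 2$) versus $k\lambda<2$, whereas you use $k\lambda\lessgtr 1$, but this is only a bookkeeping difference.
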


\begin{proof}
Note first that in order to show \eqref{eq:semierr2}, we have to prove that
\begin{equation}\label{eq:ref}
\|(E(t)-\Ek(t)\Rk)A^{\frac{r-s}2}\|\leq Ck^{\frac{s}2}t^{-\frac r2}, \quad t>0.
\end{equation}
Consider the function
\begin{equation*}
F(t,\lambda):=\ee^{-\lambda t}
-\frac{1}{1+\frac{k\lambda}{2}}\,\ee^{-\lambda t \frac{1+\frac{k\lambda}{4}}{(1+\frac{k\lambda}{2})^2}}.
\end{equation*}
In view of the spectral calculus for $A$ and the definitions of
$\Ek(t)$ and $\Rk$ using the notation $\sigma(A)$ for the spectrum of
$A$, we have that
\begin{align*}
\|(E(t)-\Ek(t)\Rk)A^{\frac{r-s}2}\|
= \sup_{\lambda \in \sigma(A)}|F(t,\lambda)| \lambda^{\frac{r-s}{2}}
\leq \sup_{\lambda >0}|F(t,\lambda)| \lambda^{\frac{r-s}{2}}.  
\end{align*}
Therefore, in order to prove \eqref{eq:ref} and hence \eqref{eq:semierr2}, we have to establish that
\begin{equation}\label{eq:Ftl}
|F(t,\lambda)|
\leq C {k^{\frac{s}{2}}}{t^{-\frac{r}{2}}} \lambda^{\frac{s-r}{2}}, \quad t,\lambda >0.
\end{equation}
We first manipulate $F$ as follows:
\begin{align*}
F(t,\lambda)
&=\ee^{-\lambda t}-\frac{1}{1+\frac{k\lambda}{2}}\,\ee^{-\lambda t \frac{1+\frac{k\lambda}{4}}{(1+\frac{k\lambda}{2})^2}}
=\ee^{-\lambda t}-\ee^{-\ln(1+\frac{k\lambda}{2})-t\lambda(1-k\lambda g(k\lambda))}
\\ &
=\ee^{-\lambda t}-\ee^{-\lambda t\big(1-k\lambda g(k\lambda)+\frac{\ln(1+\frac{k\lambda}{2})}{\lambda t}\big)}
\end{align*}
with $g(x)=\frac{3+x}{4(1+\frac{x}{2})^2}$. 
We set $G(x)=xg(x)$ and
$f(x)=G(x)-\frac{\ln(1+\frac{x}{2})}{\lambda t}$. An elementary
calculation shows that $G'(x)>0$ for $x \ge 0$, $f(0)=0$, and
$f'(x)\ge 0$ if $\lambda t\ge x\ge 2$.

Let first $\lambda t\ge \lambda k\ge 2$. Then
\begin{align*}
|F(t,\lambda)|
&
=\left| \lambda t \ee^{-\lambda t}\int_{f(k\lambda)}^{0}\ee^{\lambda t  x}\,\dd x\right|
\le \lambda t \ee^{-\lambda t}f(k\lambda)\ee^{\lambda t f(k\lambda)}
\le \lambda t \ee^{-\lambda t} G(k\lambda)\ee^{\lambda tG(k\lambda)}\frac{1}{1+\frac{k\lambda}{2}}
\\ &
=\ee^{-\lambda t(1-k\lambda g(k\lambda))}(\lambda t)^{1+\frac{r}{2}}
{k^{\frac{s}{2}}}{t^{-\frac{r}{2}}}(k\lambda)^{1-\frac{s}{2}}g(k\lambda)
\frac{\lambda^{\frac{s-r}{2}}}{1+\frac{k\lambda}{2}}.
\end{align*}
The function $h(x)=\ee^{-x\epsilon}x^{1+\alpha}$ is maximised at
$x=\frac{1+\alpha}{\epsilon}$ and
$h(\frac{1+\alpha}{\epsilon})=C_{\alpha}\epsilon^{-(1+\alpha)}$ with
$C_{\alpha}$ uniformly bounded in $\alpha$ on bounded subintervals of
$[0,\infty)$. If we set
$\epsilon=(1-k\lambda
g(k\lambda))=\frac{1+\frac{k\lambda}{4}}{(1+\frac{k\lambda}{2})^2}$
and $\alpha=\frac{r}{2}$, then, for $k\lambda \ge 2$ and
$0\le r\le s \le 2$,
\begin{align*}
|F(t,\lambda)|
&
\le C
  \frac{(1+\frac{k\lambda}{2})^{2+r}}{(1+\frac{k\lambda}{4})^{1+\frac{r}{2}}}
  \frac{1}{1+\frac{k\lambda}{2}}
  \frac{k^{\frac{s}{2}}}{t^{\frac{r}{2}}}(k\lambda)^{1-\frac{s}{2}}g(k\lambda)\lambda^{\frac{s-r}{2}}
\\ &
\le
C (k\lambda)^{\frac{r}{2}}\frac{k^{\frac{s}{2}}}{t^{\frac{r}{2}}}
(k\lambda)^{1-\frac{s}{2}}g(k\lambda)\lambda^{\frac{s-r}{2}}
\le C \frac{k^{\frac{s}{2}}}{t^{\frac{r}{2}}} \lambda^{\frac{s-r}{2}}.
\end{align*}
In the last inequality we used the fact that the function
$H(x)=x^{\beta}g(x)$ is bounded if $0\le \beta\le 1$.

Let now $\lambda k \ge 2$ and $\lambda t<\lambda k$; that is,
$1<\frac{k}{t}$. Then, for $0\le r\le s \le 2$,
\begin{align*}
|F(t,\lambda)|\le 2
\le 2 (\lambda  k)^{\frac{s-r}{2}}\left(\frac{k}{t}\right)^{\frac{r}{2}}
= 2{k^{\frac{s}{2}}}{t^{-\frac{r}{2}}} \lambda^{\frac{s-r}{2}}.
\end{align*}
Finally, let $\lambda k<2$. Then, as $G$ is monotone, we have
$G(x)=xg(x)\le G(2)=\frac{5}{8}<1$ for $x\in [0,2]$. We compute
\begin{align*}
F(t,\lambda)&
=\ee^{-\lambda t}-\frac{1}{1+\frac{k\lambda}{2}}\ee^{-\lambda t}
+\frac{1}{1+\frac{k\lambda}{2}}\,\ee^{-\lambda t}(1-\ee^{\lambda t  k\lambda g(k\lambda)})
\\ &
=\frac{\lambda k}{2}\frac{1}{1+\frac{k\lambda}{2}}\ee^{-\lambda t}
+\frac{1}{1+\frac{k\lambda}{2}}\,\ee^{-\lambda t}(1-\ee^{\lambda t k\lambda g(k\lambda)})
=:F_1(t,\lambda)+F_2(t,\lambda).
\end{align*}
For $0\le r\le s\le 2$, it follows that
\begin{equation*}
F_1(t,\lambda)
=\frac{\frac12 k^{\frac{s}{2}}(\lambda t)^{\frac{r}{2}}
\ee^{-\lambda t }(\lambda k)^{1-\frac{s}{2}}\lambda^{\frac{s-r}{2}}}
{t^{\frac{r}{2}}(1+\frac{k\lambda}{2})}
\le C \frac{k^{\frac{s}{2}}}{t^{\frac{r}{2}}} \lambda^{\frac{s-r}{2}}.
\end{equation*}
For $F_2$ we write
\begin{equation*}
F_2(t,\lambda)=\frac{1}{1+\frac{k\lambda}{2}}\ee^{-t\lambda}t\lambda\int_{k\lambda
  g(k\lambda)}^0\ee^{t\lambda x}\,\dd x,
\end{equation*}
and hence, for $\lambda k<2$ and $0\le r\le s\le 2$,
\begin{align*}
|F_2(t,\lambda)|
&\leq \big(\ee^{-t\lambda}t\lambda \big)
\big(k\lambda g(k\lambda)\big)\big(\ee^{\frac5{8}t\lambda}\big)
\\ &
=(t\lambda)^{1+\frac  r2}\ee^{-\frac3{8}t\lambda}
\frac{k^{\frac s2}}{t^{\frac r2}}(k\lambda)^{1-\frac
  s2}g(k\lambda)\lambda^{\frac{s-r}2}
\leq C\frac{k^{\frac s2}}{t^{\frac r2}}\lambda^{\frac{s-r}2}.
\end{align*}
This finishes the proof of \eqref{eq:Ftl} and hence of \eqref{eq:semierr2}.

For \eqref{eq:semierr1} we have, by \eqref{eq:semierr2} with $r=0$ and
\eqref{eq:Rkdiffnorm} with $k_2=0$, that
\begin{align*}
\|(E(t)-E_k(t))x\|
\le \|(E(t)-E_{k}(t)R_{\frac{k}{2}})x\|
+\|E_k(t)\|\|(R_{\frac{k}{2}}x-x)\|\le Ck^{\frac{s}{2}}\|x\|_s.
\end{align*}
This completes the proof. \end{proof}

We now turn to the non-linear operator $f\colon D_f\subset H \rightarrow H$ and
its approximations, where $f$ is defined as $f(u)(s)=P(u(s))$, $s\in
\mathcal D$, with $P(r)=r^3-\beta^2r$, $r\in \mathbb{R}$.  As noted
already, since $P$ is a third degree polynomial and the 
spatial dimension $d\leq 3$, we have, by Sobolev's inequality, that
$\dH^1\subset L_6(\cD)\subset D_f$.  We shall see  in
Lemma~\ref{lem:fprop} below that if 
$2k\beta^2<1$, then the equation
\begin{equation}\label{eq:jkexists}
z+kf(z)=x, \quad x\in H,
\end{equation}
has a unique solution $z\in H$ and hence we may define an inverse operator $\Jk:H\to H$ as
\begin{equation}\label{eq:Jk}
\Jk(x):=(I+kf)^{-1}(x).
\end{equation}
This is the resolvent operator of $f$. We will also establish that the range of $\Jk$ is contained
in $L_6(\mathcal{D})$ and hence in $D_f$ and therefore the Yosida approximation of $f$, that is,
\begin{equation}\label{eq:fk}
\fk(x):= f(\Jk(x)),\quad x\in H,
\end{equation}
is also well defined $H\to H$. Furthermore, since the resolvent operator $\Rk$ of $A$
is a bounded operator on $H$, it follows that
\begin{equation}\label{eq:Fk}
\Fk(x):=\Rk\fk(\Rk x)
\end{equation}
is well defined, too.

We shall now prove the above claims and also some further
important properties of the non-linear operators that we need in the sequel.

\begin{lem} \label{lem:fprop} Let $f$ be as described above and let
  $\Jk$, $\fk$, and $\Fk$ be as in \eqref{eq:Jk}, \eqref{eq:fk}, and
  \eqref{eq:Fk}, respectively. Also, assume that $\ts \leq \ts_0$ with
  $2\ts_0\beta^2<1$.
\begin{enumerate}
\item\label{it:jkexists} The equation \eqref{eq:jkexists} has a unique
  solution $z\in L_6(\cD)\subset \mathcal D_f \subset H$ for every $x\in H$ and thus $\Jk\colon H\to H$ is
  well-defined. Furthermore, $f(\Jk(x))=f(z)\in H$ so that $\fk\colon H\rightarrow H$
  is also well defined. If, in addition, $x\in \dH^1$, then also
  $z,f(z)\in \dH^1$.

\item \label{it:Jklip} The operator $\Jk\colon H\to H$ is Lipschitz
  continuous and obeys a linear growth condition on $\dH^1$ with
  constants that are bounded as $\ts\rightarrow 0$; more precisely,
\begin{align}\label{eq:Jklip}
\|\Jk(x)-\Jk(y)\|&\leq  C(\ts)\|x-y\|, \quad x,y\in H,\\
\label{eq:Jklip1}
\|\Jk(x)\|_1&\leq  C(\ts)\|x\|_1\quad x\in \dH^1,
\end{align}
with  $C(\ts)=\frac{1}{\sqrt{1-2\ts \beta^2}}$.

\item \label{it:oneside} The operators $\fk, \Fk\colon H\to H$ are
  Lipschitz continuous with Lipschitz constant $1/k$. Furthermore,
  they have the same one-sided Lipschitz and dissipativity
  properties as $f$;  that is, \eqref{eq:onesided1}, \eqref{eq:diss1},
  and \eqref{eq:diss2} hold true with $f$ replaced by $\fk$ or
  $\Fk$ and $\beta^2$ replaced by $\beta^2/(1-2\ts \beta^2)$.

\item \label{it:IJF} It holds that
\begin{equation}\label{eq:IJF}
x-\Jk(x)=\ts\fk(x), \quad x\in H.
\end{equation}

\item \label{it:FaFb} It also holds that
\begin{equation}\label{eq:FaFb}
\begin{split}
\langle F_{\alpha}(x)-F_{\beta}(y),x-y\rangle
&=
\langle
f_{\alpha}(R_{\frac\alpha 2}x)-f_{\beta}(R_{\frac\beta 2}y),
R_{\frac\alpha 2}x-R_{\frac\beta 2}y\rangle
\\ & \quad
+\langle
f_{\alpha}(R_{\frac\alpha 2}x), (R_{\frac\alpha 2}-R_{\frac\beta
  2})y\rangle
\\ & \quad
+\langle f_{\beta}(R_{\frac\beta 2}y), (R_{\frac\alpha
  2}-R_{\frac\beta 2})x\rangle,\quad x,y \in H.
\end{split}
\end{equation}

\end{enumerate}
\end{lem}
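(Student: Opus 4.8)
The plan is to prove the five items more or less in the order stated, since each relies on the previous ones, and to exploit throughout that $f(u)=u^3-\beta^2 u$ is monotone up to the linear perturbation $-\beta^2 u$.

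For item~\eqref{it:jkexists} I would argue that $z\mapsto z+kf(z)$ is a strictly monotone, coercive, continuous (indeed $C^1$) map on $L_6(\cD)$; more concretely, writing $g(z)=z+k(z^3-\beta^2 z)=(1-k\beta^2)z+kz^3$, the scalar function $s\mapsto (1-k\beta^2)s+ks^3$ is strictly increasing on $\R$ because $1-k\beta^2>0$ (using $2k\beta^2<1$), so it has a continuous increasing inverse with at most cubic-root growth. Solving \eqref{eq:jkexists} pointwise and checking that the resulting $z$ lies in $H$ (hence in $L_6$ by the bound $|z|\le |x|^{1/3}k^{-1/3}+C|x|$, say) gives existence and uniqueness; then $f(z)=(x-z)/k\in H$ immediately. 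For the $\dH^1$ regularity, I would test the identity $z+kf(z)=x$ against $Az$ (equivalently differentiate in space), using the dissipativity $\langle f(z),z\rangle_1\ge -\beta^2\|z\|_1^2$ from \eqref{eq:diss2} to get $(1-k\beta^2)\|z\|_1^2\le \|z\|_1\|x\|_1$ after the cross term is controlled; one must be a little careful to justify that $z\in\dH^1$ in the first place (e.g.\ by a difference-quotient / Galerkin argument), but the a priori bound is exactly \eqref{eq:Jklip1}.

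Items~\eqref{it:Jklip} and~\eqref{it:oneside} are then short. For \eqref{eq:Jklip}: if $z_i=\Jk(x_i)$ then $z_1-z_2+k(f(z_1)-f(z_2))=x_1-x_2$; pairing with $z_1-z_2$ and using the one-sided Lipschitz bound \eqref{eq:onesided1} gives $(1-k\beta^2)\|z_1-z_2\|^2\le \|x_1-x_2\|\,\|z_1-z_2\|$, i.e.\ $C(k)=(1-2k\beta^2)^{-1/2}$ — note the paper's constant has $1-2k\beta^2$ rather than $1-k\beta^2$, so I would keep the (larger, hence still valid) stated constant to match later uses. Item~\eqref{it:IJF} is purely algebraic: $x-\Jk(x)=x-z=kf(z)=kf(\Jk(x))=k\fk(x)$. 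Using \eqref{eq:IJF}, the Lipschitz constant $1/k$ for $\fk$ follows from $\fk(x)-\fk(y)=\tfrac1k((x-\Jk x)-(y-\Jk y))$ together with the one-sided Lipschitz property, which forces $\Jk$ to be a contraction-like map making $x\mapsto x-\Jk x$ nonexpansive — more precisely one shows $\|\fk(x)-\fk(y)\|\le \tfrac1k\|x-y\|$ directly from $k(\fk(x)-\fk(y))=(x-y)-(\Jk x-\Jk y)$ and $\langle \Jk x-\Jk y, \fk(x)-\fk(y)\rangle\ge -\beta^2\|\Jk x-\Jk y\|^2$ combined with $\Jk x-\Jk y = (x-y)-k(\fk(x)-\fk(y))$. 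For the one-sided and dissipativity properties of $\fk$: writing $u=\Jk x$, $v=\Jk y$, one has $x-y=(u-v)+k(f(u)-f(v))$ and hence $\langle \fk(x)-\fk(y),x-y\rangle = \langle f(u)-f(v),u-v\rangle + k\|f(u)-f(v)\|^2 \ge -\beta^2\|u-v\|^2$, and since $\|u-v\|^2\le \|x-y\|^2/(1-2k\beta^2)$ (from \eqref{eq:Jklip}) the constant $\beta^2/(1-2k\beta^2)$ appears; similarly for $\langle \fk(x),x\rangle$ using $f(0)=0$. For $\Fk=\Rk\fk\Rk$, since $\Rk$ is self-adjoint, $\langle \Fk(x)-\Fk(y),x-y\rangle=\langle \fk(\Rk x)-\fk(\Rk y),\Rk x-\Rk y\rangle$, so the one-sided bound transfers with an extra factor $\|\Rk(x-y)\|^2\le\|x-y\|^2$, and likewise for \eqref{eq:diss1}; the $\dH^1$ dissipativity \eqref{eq:diss2} for $\Fk$ uses that $\Rk$ commutes with $A$ and contracts in $\dH^1$. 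The Lipschitz constant $1/k$ for $\Fk$ follows from that of $\fk$ and $\|\Rk\|\le 1$.

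Finally, item~\eqref{it:FaFb} is a bookkeeping identity: abbreviating $a=R_{\alpha/2}x$, $b=R_{\beta/2}y$, we have $\langle F_\alpha(x)-F_\beta(y),x-y\rangle = \langle R_{\alpha/2}f_\alpha(a)-R_{\beta/2}f_\beta(b),\,x-y\rangle = \langle f_\alpha(a),R_{\alpha/2}x\rangle - \langle f_\alpha(a),R_{\beta/2}y\rangle + \langle f_\beta(b),R_{\beta/2}y\rangle - \langle f_\beta(b),R_{\alpha/2}x\rangle$ by self-adjointness of the resolvents, and regrouping $\langle f_\alpha(a),a\rangle-\langle f_\beta(b),b\rangle + \langle f_\alpha(a), a-R_{\beta/2}y\rangle + \langle f_\beta(b), b-R_{\alpha/2}x\rangle$... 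I would just expand directly: add and subtract $\langle f_\alpha(a),b\rangle$ and $\langle f_\beta(b),a\rangle$ to produce the three asserted terms. The only genuine subtlety in the whole lemma is the $\dH^1$-regularity assertion in~\eqref{it:jkexists} — justifying that $\Jk(x)\in\dH^1$ when $x\in\dH^1$ rather than merely bounding its norm — and that is where I would spend the most care, probably via a Galerkin approximation to \eqref{eq:jkexists} combined with the uniform bound from \eqref{eq:diss2}; everything else is an exercise in monotonicity and the resolvent identities already recorded in Section~\ref{sec:reform}.
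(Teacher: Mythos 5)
Your proposal is correct, and for items \eqref{it:Jklip}--\eqref{it:FaFb} it essentially reproduces the paper's argument: the Lipschitz bound for $\Jk$ from pairing (or squaring) the difference identity, the constant $1/k$ for $\fk$ from the expansion of $\|x-y\|^2$ with the one-sided Lipschitz term, the one-sided and dissipativity bounds for $\fk$ transferring to $\Fk$ via self-adjointness and $\dot H^1$-contractivity of $\Rk$, and the add-and-subtract bookkeeping for \eqref{eq:FaFb}. Your direct pairing gives the sharper constant $(1-k\beta^2)^{-1}$ rather than $(1-2k\beta^2)^{-1/2}$; keeping the weaker stated constant is fine.

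For item~\eqref{it:jkexists} you take a genuinely different and more elementary route. The paper approximates $x\in H$ by continuous functions $x_n$, establishes the uniform $L_6$ bound \eqref{eq:L6bound} by taking squared norms of the equation, proves that $\{z_n^3\}$ is Cauchy in $H$ via \eqref{eq:contlip}, and then upgrades to $L_6$-convergence of $\{z_n\}$ with the elementary inequality \eqref{eq:lsixconv}. You instead solve $(1-k\beta^2)s+ks^3=x(\xi)$ pointwise a.e.\ using strict monotonicity of the cubic, which bypasses the entire Cauchy-sequence/$L_6$-interpolation machinery and is arguably cleaner. One correction: your stated pointwise bound $|z|\le |x|^{1/3}k^{-1/3}+C|x|$ would only give $z\in L_6$ if $x\in L_6$, which you do not have; the correct observation is that $k|z|^3\le |x|$ (both terms on the left of the scalar equation share the sign of $z$), so $|z|\le (|x|/k)^{1/3}$, and hence $\|z\|_{L_6}^6\le k^{-2}\|x\|^2<\infty$. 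Separately $(1-k\beta^2)|z|\le|x|$ gives $z\in H$, and $f(z)=(x-z)/k\in H$. With that fix, the pointwise argument closes. You also correctly flag that the $\dot H^1$-regularity assertion really needs a Galerkin or difference-quotient justification before \eqref{eq:sqr1} can be read as more than an a priori bound; the paper glosses over this point, so you are being more careful than the source.
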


\begin{proof}
  To prove Item \eqref{it:jkexists} we consider the function
  $g(r)=r+kP(r)-s$ for arbitrary but fixed $s\in \R$. We have that
  $g'(r)=1+k3r^2-k\beta^2$, so if $k\beta^2<1$, then $g$ is monotone
  and since it is a cubic polynomial the equation $g(r)=0$ has exactly
  one solution $r=r(s)$. It follows that if $x$ is a continuous
  function on $\cD$, then equation \eqref{eq:jkexists} has a unique
  solution $z$ that is also continuous on $\mathcal{D}$, since $P$ is continuous
  function on $\R$.

  If $x\in H=L_2(\Dom)$ is arbitrary, then we may approximate it by continuous
  functions. Let $\{x_n\}_{n=1}^\infty$ be a sequence of continuous
  functions such that $\lim_{n\rightarrow \infty } x_n=x$ in $H$ and let
  $\{z_n\}_{n=1}^\infty$ be the corresponding solutions to
  \eqref{eq:jkexists} with $x$ replaced by $x_n$. Using the particular
  form $P(r)=r^3-\beta^2r$, we shall show that $z_n\rightarrow z$ in
  $L_6(\Dom)$ and that this implies $z^3_n\rightarrow z^3$ in
  $H$, where $z_n^3(s):=(z_n(s))^3$, $s\in \mathcal D$. This means that $f(z_n)\rightarrow f(z)$ in $H$,
  which in turn implies the solvability of \eqref{eq:jkexists} as well
  as the fact that $f_k$ is a well defined operator on $H$.

  To do this we note that
  $\langle f(x),x\rangle=\|x\|_{L_4(\Dom)}^4-\beta^2\|x\|^2$ and
  $\|f(x)\|^2=\|x\|_{L_6(\Dom)}^6-2\beta^2\|x\|^4_{L_4(\Dom)}+\beta^4\|x\|^2$.
  Taking the squared norm of both sides of \eqref{eq:jkexists} we
  therefore see that, if $2k\beta^2<1$, then
\begin{equation}\label{eq:L6bound}
\begin{aligned}
\|x_n\|^2
&
= \|z_n\|^2
+ 2k\langle f(z_n),z_n\rangle
+ k^2\|f(z_n)\|^2
\\ &
=\|z_n\|^2
+2 k\big(
\|z_n\|_{L_4(\Dom)}^4 -\beta^2\|z_n\|^2\big)
\\ &
\quad +k^2\big( \|z_n\|_{L_6(\Dom)}^6
-2\beta^2\|z_n\|^4_{L_4(\Dom)}
+\beta^4\|z_n\|^2\big)
\\ &
\geq  (1-2k\beta^2) \big(\|z_n\|^2+k^2 \|z_n\|_{L_6 (\Dom )}^6\big)
\\ &
\geq  (1-2k_0\beta^2)\big(\|z_n\|^2+k^2 \|z_n\|_{L_6 (\Dom )}^6\big).
\end{aligned}
\end{equation}
Since the sequence $\{x_n\}_{n=1}^\infty$ is uniformly
bounded in $H$ it follows that $\{z_n\}_{n=1}^\infty$ is uniformly
bounded in $L_6(\Dom)$.

To proceed we claim that $\{z_n^3\}_{n=1}^{\infty}$ is a Cauchy
sequence in $H$. To see this, take the squared norm of both
sides of the identity
\begin{equation}\label{eq:diffJk}
z_i-z_j+k(f(z_i)-f(z_j))=x_i-x_j
\end{equation}
and use that
\begin{equation*}
\|f(z_i)-f(z_j)\|^2=\|z^3_i-z^3_j\|^2-2\beta^2 \langle
f(z_i)-f(z_j),z_i-z_j\rangle+\beta^4\|z_i-z_j\|^2
\end{equation*}
to compute
\begin{equation}\label{eq:Jkcompute}
\begin{aligned}
\|x_i-x_j\|^2&=\|z_i-z_j\|^2+2k\langle
f(z_i)-f(z_j),z_i-z_j\rangle+\ts^2\|f(z_i)-f(z_j)\|^2\\
&=\|z_i-z_j\|^2+2k(1-k\beta^2)\langle
f(z_i)-f(z_j),z_i-z_j\rangle
\\ &\quad
+k^2\|z^3_i-z^3_j\|^2
+k^2\beta^4\|z_i-z_j\|^2.
\end{aligned}
\end{equation}
Thus, if $2k\beta^2<1$, then \eqref{eq:onesided1} implies that
\begin{equation}\label{eq:contlip}
\begin{aligned}
\|x_i-x_j\|^2&\geq \|z_i-z_j\|^2-2k(1-k\beta^2)
\beta^2\|z_i-z_j\|^2+k^2\|z^3_i-z^3_j\|^2\\
&\geq \|z_i-z_j\|^2-2k \beta^2\|z_i-z_j\|^2+k^2\|z^3_i-z^3_j\|^2\\
&\geq (1-2k \beta^2)\|z_i-z_j\|^2+k^2\|z^3_i-z^3_j\|^2.
\end{aligned}
\end{equation}
Thus,  $\{z_n^3\}_{n=1}^{\infty}$ is a Cauchy sequence in $H$.

It is not hard to see that if $x,y\in L_6(\Dom)$, then
\begin{equation}\label{eq:lsixconv}
\|x-y\|_{L_6(\Dom)}^6\leq C\|x^3-y^3\|^2
\end{equation}
for a sufficiently large positive number $C$. Indeed, note that
\eqref{eq:lsixconv} is equivalent to
\begin{equation*}
0\leq\int_{\Dom}\big(C(x^3-y^3)^2-(x-y)^{6}\big)\,\dd \xi
=\int_{\Dom}(x-y)^2\big(C(x^2+xy+y^2)^2-(x-y)^{4}\big)\,\dd \xi.
\end{equation*}
Thus, we need to find a $C$ such that
$P_C(t,s):=C(t^2+ts+t^2)^2-(t-s)^{4}\ge0$ for all real $t$ and $s$.
We have that
\begin{equation*}
P_C(t,s)=(C-1)(t^4+s^4)+2(C+2)(ts)(t^2+s^2)+3(C-2)(t^2s^2).
\end{equation*}
If $ts\geq 0$, then $C>2$ suffices. Assume $C>2$ and $ts<0$. Then
\begin{equation*}
P_C(t,s)\geq (C-1)(t^4+s^4)
-2(C+2)\big(\epsilon t^2s^2+\tfrac1{2\epsilon}(t^4+s^4)\big)
+3(C-2)t^2s^2.
\end{equation*}
Take $\epsilon=\frac{3(C-2)}{2(C+2)}$, so that
$\frac1{2\epsilon}=\frac{C+2}{3(C-2)}$. Then
\begin{equation*}
P_C(t,s)\geq \Big(C-1-\frac23\frac{(C+2)^2}{(C-2)}\Big)(t^4+s^4).
\end{equation*}
We must find $C>2$ such that
\begin{equation*}
C-1-\frac23\frac{(C+2)^2}{C-2}\ge0.
 \end{equation*}
Under the current restriction on $C$ this holds if
$C^2-17C-2\geq 0$.
This is true for large $C$ and \eqref{eq:lsixconv} is proved.

By combining \eqref{eq:contlip} and \eqref{eq:lsixconv}, we see that
$\{z_n^3\}_{n=1}^{\infty}$ is a Cauchy sequence in $L_6(\Dom)$. Thus,
$z_n\rightarrow z$ for some $z$ in $L_6(\Dom)$. It remains to show
that $z_n^3\rightarrow z^3$ in $H$. But
\begin{equation*}
\begin{aligned}
\|z^3_n-z^3\|^2&=\|(z_n^2+z_nz+z^2)(z_n-z)\|^2\leq
\|z_n^2+z_nz+z^2\|^2_{L_3(\Dom)}\|z_n-z\|_{L_{6}(\Dom)}^2\\
&\leq
C\big(\|z_n\|_{L_6(\Dom)}^4+\|z\|^4_{L_6(\Dom)}\big)\|z_n-z\|_{L_{6}(\Dom)}^2.
\end{aligned}
\end{equation*}
Since $z\in L_6(\Dom)$ and, by \eqref{eq:L6bound}, the sequence
$\{z_n\}_{n=1}^\infty$ is bounded in $L_6(\Dom)$ and
$z_n\rightarrow z$ in $ L_6(\Dom)$ the assertion is true. We have thus
shown that there is a solution $z \in L_6(\Dom)$ to
\eqref{eq:jkexists} for every $x\in H$.  Uniqueness follows from
\eqref{eq:contlip}. 

If $x\in \dH^1$, then take the squared $\dH^1$-norm of
\eqref{eq:jkexists} and use \eqref{eq:diss2} to get
\begin{equation}\label{eq:sqr1}
\begin{aligned}
\|x\|^2_1&=\|z\|^2_1+2k\langle f(z),z\rangle_1
+k^2\|f(z)\|^2_1\\
&\geq\|z\|^2_1-2 k\beta^2\|z\|^2_1+k^2\|f(z)\|^2_1.
\end{aligned}
\end{equation}
Since $2 k\beta^2<1$, we have that $z,f(z)\in \dH^1$ if $x\in
\dH^1$. We have proved Item~\eqref{it:jkexists}.

We turn to Item \eqref{it:Jklip}.  Lipschitz continuity of $J_k$ in
$H$ follows from \eqref{eq:contlip}, since
\begin{equation*}
\|\Jk(x_i)-\Jk(x_j)\|
=\|z_i-z_j\|\leq \frac1{\sqrt{1-2 k \beta^2}}\|x_i-x_j\|.
\end{equation*}
The linear growth bound
\eqref{eq:Jklip1} in $\dH^1$ follows from \eqref{eq:sqr1}.

To prove Item~\eqref{it:oneside}, we use the first equality in
\eqref{eq:Jkcompute} and \eqref{eq:onesided1} to conclude that if
$2k\beta^2<1$, then
\begin{equation*}
\|x_i-x_j\|\geq k\|f(z_i)-f(z_j)\|.
\end{equation*}
That is the claimed Lipschitz continuity of $f_k$ in $H$.
The claim about $F_k$ follows immediately from this and the
contraction property \eqref{eq:resol} of $\Rk$.

For the one-sided Lipschitz conditions, we use \eqref{eq:diffJk},
\eqref{eq:onesided1}, and the result in Item~\eqref{it:Jklip}. Indeed,
for $f_k$,
\begin{equation}\label{eq:fkl}
\begin{aligned}
&\langle f(z_1)-f(z_2),x_1-x_2\rangle =\langle
f(z_1)-f(z_2),z_1-z_2+k(f(z_1)-f(z_2)) \rangle\\
& \qquad\geq -\beta^2\|
z_1-z_2\|^2 +k\|f(z_1)-f(z_2)\|^2
\geq -\frac{\beta^2}{1-2\ts\beta^2}\|x_1-x_2\|^2.
\end{aligned}
\end{equation}
For $\Fk$, from \eqref{eq:fkl} and the the contraction property
\eqref{eq:resol} of $\Rk$, we have that
\begin{equation*}
\begin{aligned}
\langle \Fk(x_1)-\Fk(x_2),x_1-x_2\rangle
& =\langle\Rk ( \fk(\Rk x_1)-\fk(\Rk x_2)),x_1-x_2\rangle\\
&= \langle \fk(\Rk x_1)-\fk(\Rk x_2),\Rk x_1-\Rk x_2 \rangle\\
&\geq -\frac{\beta^2}{1-2k\beta^2}\|\Rk x_1-\Rk x_2\|
\geq -\frac{\beta^2}{1-2k\beta^2}\| x_1-x_2\|.
\end{aligned}
\end{equation*}
The fact that \eqref{eq:diss1} holds for $\fk$ and $\Fk$ follows from
the one-sided Lipschitz conditions and the fact that $\fk(0)=\Fk(0)=0$.

To see that \eqref{eq:diss2} holds for $\fk$ (with a different
constant) we use \eqref{eq:diss2} for $f$, the definitions of $z,f_k$,
and $J_k$ as well as the linear growth \eqref{eq:Jklip1} of $\Jk$ on
$\dH^1$ to conclude that
\begin{align*}
\langle \fk(x),x \rangle_1
&=\langle f(J_k(x)),z+kf(z) \rangle_1
=\langle f(z),z+kf(z) \rangle_1
=\langle f(z),z\rangle_1+k\|f(z)\|_1^2
\\ &
\geq-\beta^2\| z\|_1^2
= -\beta^2\| J_k(x)\|_1^2
\geq-\frac {\beta^2}{1-2\ts\beta^2}\|x\|_1^2.
\end{align*}
Therefore, as $\Rk$ is also a contraction on $\dH^1$, it follows that
\begin{align*}
\langle \Fk(x),x\rangle_1&=\langle \Rk\fk(\Rk x),x\rangle_1
=\langle \fk(\Rk x),\Rk x\rangle_1
\\ &
\geq -\frac {\beta^2}{1-2\ts\beta^2}\|\Rk x\|_1^2
\geq -\frac {\beta^2}{1-2\ts\beta^2}\| x\|_1^2.
\end{align*}
The statement in Item \eqref{it:IJF} is only a rearrangement of
\eqref{eq:jkexists} using  $z=\Jk(x)$.

To prove Item \eqref{it:FaFb} we first note that
\begin{equation*}
\begin{aligned}
\langle F_{\alpha}(x)-F_{\beta}(y),x-y\rangle&=\langle
F_{\alpha}(x),x-y\rangle-\langle F_{\beta}(y),x-y\rangle\\
&=\langle
f_{\alpha}(R_\alpha x),R_\alpha(x-y)\rangle-\langle f_{\beta}(R_\beta y),R_\beta(x-y)\rangle.
\end{aligned}
\end{equation*}
The statement follows by adding and subtracting the quantities $\langle
f_{\alpha}(R_\alpha x),R_\beta y\rangle$ and $\langle
f_{\beta}(R_\beta y),R_\alpha x\rangle$ and rearranging the terms.
\end{proof}

We end this section by making the useful observation that, in view of
\eqref{eq:IJF} and the fact that $\Rk x$ solves $(I+\tfrac k2A)y=x$,
we have
\begin{equation}\label{eq:yosidacomp}
x-\Jk (\Rk x)=x-\Rk x +\Rk x-\Jk(\Rk x)=\frac k2 A\Rk x+k\fk(\Rk x).
\end{equation}
Throughout the rest of the paper we will often write
$J_kx$ for $J_k(x)$ in order to increase readability of complicated
formulae even though $J_k$ is non-linear.

\section{Moment bounds on solutions}\label{sec:momb}
Various moment bounds on $\Xtk$ and $\Xj$ are crucial for our
convergence analysis.  In order to prove these we assume moment bounds
on the $\dH^1$-norm of the initial value and some spatial smoothness
of the noise.

\begin{assumption}\label{ass:q}\phantom{blah}\hfill
\begin{enumerate}
\item It holds that $k\le k_0$ with $2k_0\beta^2<1$.
\item For the covariance operator $Q$ of the Wiener process $W$, the
  inequality $\|A^{1/2}Q^{1/2}\|_{\HS}<\infty$ holds.
\item For some $q\ge1$, the $q^{\mathrm{th}}$ moment of the
  $\dH^1$-norm of the initial value $X_0$ is finite; that is, 
  $\EE\|X_0\|_1^q<\infty$.
\end{enumerate}
\end{assumption}

\subsection{Bounds on $\Xtk$.}\label{subsec:bxkt}
\begin{lem}\label{lem:tildebnd}
  Let $\Xtk(t)$ be the solution of \eqref{eq:pspde} and, for some
  $p\geq 1$, let Assumption~\ref{ass:q} hold with $q=2p$. Then there
  is $C_1=C_1(\ts_0,T,p,\HSAQ,\EE\|X_0\|_1^{2p})>0$ such that
\begin{equation*}
\EE\Big(\sup_{0\leq t\leq
  T}\|\Xtk(t)\|_1^{2p}\Big)\leq C_1
\end{equation*}
and $C_2=C_2(\ts_0,T,\HSAQ,\EE\|X_0\|_1^{2})>0$ such that
\begin{equation*}
\EE\int_0^T\|\Ak^{1/2}\Xtk(s)\|_1^2\,\dd s\leq C_2.
\end{equation*}
\end{lem}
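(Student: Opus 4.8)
The plan is to apply It\^o's formula to $t\mapsto\Phi(\Xtk(t))$ with $\Phi(x)=\|x\|_1^{2p}=\langle Ax,x\rangle^p$, exploiting that all the coefficients in \eqref{eq:pspde} are bounded: $\Ak\in\cB(H)$ by \eqref{eq:AAk}--\eqref{eq:akb}, $\Fk$ is globally Lipschitz on $H$ by Item~\eqref{it:oneside} of Lemma~\ref{lem:fprop}, and $\|A^{1/2}\Rk Q^{1/2}\|_{\HS}\le\HSAQ<\infty$ by Lemma~\ref{lem:qarkbnd}. Since $\|\cdot\|_1$ is not finite on all of $H$, I would first justify It\^o's formula for the functional $\langle A\cdot,\cdot\rangle^p$ by working on a spectral Galerkin truncation of \eqref{eq:pspde} (legitimate because $\Rk$, $\Mk$, $\Ak$ all commute with $A$) together with the stopping times $\tau_R=\inf\{t:\|\Xtk(t)\|_1\ge R\}$, and then pass to the limits $N\to\infty$, $R\to\infty$; the $\dH^1$-valued stochastic integral $\int_0^t\Rk\,\dd W$ that appears here is well defined precisely because of Lemma~\ref{lem:qarkbnd}. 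Throughout, $C_\beta:=\beta^2/(1-2\ts_0\beta^2)<\infty$ is independent of $\ts\le\ts_0$.

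Computing $D\Phi$ and $D^2\Phi$ and using that $\langle A\Xtk,\Ak\Xtk\rangle=\|\Ak^{1/2}\Xtk\|_1^2\ge0$ (as $\Ak$ is positive and commutes with $A$), It\^o's formula gives, for $t\le T\wedge\tau_R$,
\begin{equation*}
\begin{aligned}
\|\Xtk(t)\|_1^{2p}+2p\int_0^t\|\Xtk\|_1^{2p-2}\|\Ak^{1/2}\Xtk\|_1^2\,\dd s
&=\|X_0\|_1^{2p}-2p\int_0^t\|\Xtk\|_1^{2p-2}\langle\Fk(\Xtk),\Xtk\rangle_1\,\dd s\\
&\quad+\int_0^t I(\Xtk(s))\,\dd s+M(t),
\end{aligned}
\end{equation*}
where $I(x)=\tfrac12\Tr[\Rk Q\Rk\,D^2\Phi(x)]$ and $M(t)=2p\int_0^t\|\Xtk(s)\|_1^{2p-2}\langle\Rk A\Xtk(s),\dd W(s)\rangle$. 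The three terms on the right are treated as follows. The drift is controlled by the dissipativity estimate \eqref{eq:diss2} for $\Fk$ (Item~\eqref{it:oneside} of Lemma~\ref{lem:fprop}): $-\langle\Fk(\Xtk),\Xtk\rangle_1\le C_\beta\|\Xtk\|_1^2$. The It\^o correction obeys $|I(x)|\le C\,\HSAQ^2\|x\|_1^{2p-2}\le C(1+\|x\|_1^{2p})$, using $\|Q^{1/2}A^{1/2}\Rk\|\le\HSAQ$ (Lemma~\ref{lem:qarkbnd}), $\|\Rk\|\le1$, and \eqref{eq:hs}. For the martingale, $\|Q^{1/2}\Rk A\Xtk\|=\|Q^{1/2}A^{1/2}\Rk\,A^{1/2}\Xtk\|\le\HSAQ\|\Xtk\|_1$, so the Burkholder--Davies--Gundy inequality \eqref{eq:rbdg} (with exponent $1$) gives
\begin{equation*}
\begin{aligned}
\EE\sup_{t\le T\wedge\tau_R}|M(t)|
&\le C\,\HSAQ\,\EE\Big(\int_0^{T\wedge\tau_R}\|\Xtk\|_1^{4p-2}\,\dd s\Big)^{1/2}\\
&\le\tfrac12\EE\sup_{s\le T\wedge\tau_R}\|\Xtk\|_1^{2p}+C\,\EE\int_0^{T\wedge\tau_R}(1+\|\Xtk\|_1^{2p})\,\dd s,
\end{aligned}
\end{equation*}
the last step by $\|\Xtk\|_1^{4p-2}\le(\sup_s\|\Xtk\|_1^{2p})\|\Xtk\|_1^{2p-2}$, $\|\Xtk\|_1^{2p-2}\le1+\|\Xtk\|_1^{2p}$ (valid since $p\ge1$), and Young's inequality.

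Collecting these bounds, discarding the non-negative second term on the left, taking the supremum over $t\in[0,T\wedge\tau_R]$ and then expectations, and absorbing the $\tfrac12\EE\sup$ term, one arrives at
\begin{equation*}
\EE\sup_{0\le t\le T\wedge\tau_R}\|\Xtk(t)\|_1^{2p}\le C\big(1+\EE\|X_0\|_1^{2p}\big)+C\int_0^T\EE\sup_{0\le r\le s\wedge\tau_R}\|\Xtk(r)\|_1^{2p}\,\dd s,
\end{equation*}
with $C$ depending only on $p$, $T$, $\HSAQ$ and $\ts_0$ (through $C_\beta$), not on $\ts$ or $R$. Gronwall's lemma (Lemma~\ref{lem:gronwallCont} with exponent $\alpha=1$) then gives a bound uniform in $R$, and $R\to\infty$ together with Fatou's lemma yields the first assertion. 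For the second assertion I would repeat the computation with $p=1$ but take expectations \emph{without} first taking the supremum, so that the martingale term vanishes and the dissipative term $2\,\EE\int_0^T\|\Ak^{1/2}\Xtk\|_1^2\,\dd s$ is retained; bounding the right-hand side by means of the first assertion applied with $p=1$ (available since $q=2p\ge2$ forces $\EE\|X_0\|_1^2<\infty$) and using $\ts\le\ts_0$ produces the claimed constant $C_2$.

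The step I expect to be the main obstacle is the rigorous justification of It\^o's formula for $\|\Xtk(t)\|_1^{2p}$: although \eqref{eq:pspde} has bounded coefficients, $\|\cdot\|_1$ is unbounded on $H$, so one must carry out the computation along the Galerkin-plus-localisation approximation described above and, in particular, verify there that the $\dH^1$-valued stochastic integral $\int_0^t\Rk\,\dd W$ is well defined --- which is exactly where the hypothesis $\HSAQ<\infty$ enters, via Lemma~\ref{lem:qarkbnd}.
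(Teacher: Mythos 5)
Your proposal is correct in its estimates and overall structure, but it follows a genuinely different route from the paper. You attack the problem head-on: apply It\^o's formula directly to the unbounded functional $\Phi(x)=\|x\|_1^{2p}$, which forces you into a Galerkin-plus-stopping-time approximation to make the step rigorous --- and you correctly flag this justification as the delicate point. The paper instead sidesteps the unboundedness entirely by introducing the transformed process $\tY(t)=A^{1/2}\Xtk(t)$, verifying that $\tY$ solves a new SPDE in $H$ with bounded drift $\Ak\tY+A^{1/2}\Fk(A^{-1/2}\tY)$ and Hilbert--Schmidt diffusion $A^{1/2}\Rk Q^{1/2}$ (so strong and mild solutions coincide), and then applying the standard It\^o formula only to the \emph{continuous} functional $\tfrac12\|\cdot\|^2$, citing \cite{Brzezniak}. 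The paper raises the resulting \emph{pathwise} energy inequality \eqref{eq:pathineq} to the power $p$ afterwards (via \eqref{eq:psum} and H\"older), so it never needs $D\Phi$ or $D^2\Phi$ for the $p$-th power. What your approach buys is directness and no auxiliary process; what the paper's approach buys is a much cheaper rigorous justification --- no Galerkin convergence in $\dot H^1$ to establish, no localisation, no Fatou --- and simpler derivative formulas. Your treatment of the drift via \eqref{eq:diss2} for $\Fk$, the It\^o correction via Lemma~\ref{lem:qarkbnd}, the BDG/Young splitting of the martingale, the Gronwall step, and the $p=1$ argument for the second assertion all match the paper's estimates once the transformation is unwound.

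One small caution on the route you propose: the stopping times $\tau_R=\inf\{t:\|\Xtk(t)\|_1\ge R\}$ only make sense once you already know $\Xtk$ has continuous $\dot H^1$-valued paths, which is part of what you are trying to prove. In a careful write-up you would therefore have to argue first at the Galerkin level (where $\|\cdot\|_1$ is just an equivalent finite-dimensional norm and no localisation is needed), obtain bounds uniform in the truncation parameter, and then pass to the limit by Fatou --- making the stopping times unnecessary rather than complementary to the Galerkin step.
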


\begin{proof}
Since the functional $\|\cdot\|^2_1$ is not continuous on $H$,
we can not apply It\^o's formula directly. But note, first formally,
that
\begin{equation}\label{eq:formal}
\begin{aligned}
\tY(t):&=A^{1/2}\Xtk(t)
=\Ek(t)A^{1/2}X_0-\int_0^t\Ek(t-s)A^{1/2}\Fk(\Xtk(s))\,\dd s
\\ & \quad
+\int_0^t\Ek(t-s)A^{1/2}\Rk\,\dd W(s)
\\ &
=\Ek(t)A^{1/2}X_0
-\int_0^t\Ek(t-s)A^{1/2}\Fk(A^{-1/2}\tY(s))\,\dd s
\\ & \quad
+\int_0^t\Ek(t-s)A^{1/2}\Rk\,\dd W(s).
\end{aligned}
\end{equation}
Thus $\tY(t)$ is the mild solution of
\begin{equation*}
\dd \tY+\big(\Ak\tY+A^{1/2}\Fk(A^{-1/2}\tY)\big)\,\dd
t=A^{1/2}\Rk\,\dd W,\ t\in(0,T);\ \tY(0)=A^{1/2}X_0.
\end{equation*}
Since $\Ak$ is bounded, $A^{1/2}\Fk(A^{-1/2}\cdot)$ is globally Lipschitz, and
$A^{1/2}\Rk Q^{1/2}$ is a Hilbert--Schmidt operator, the mild
solution exists and it equals the strong solution. Reverting the
computation in \eqref{eq:formal} one finds that $A^{-1/2}\tY(t)$
solves \eqref{eq:mildpert}. Hence, it does indeed hold that
$\tY(t)=A^{1/2}\Xtk(t)$.

Since $\tfrac12\norm{\cdot}^2$ is continuous on $H$, we have, almost
surely (\cite[Theorem~2.1]{Brzezniak}),
\begin{equation*}
\begin{aligned}
\tfrac12\|\tY(t)\|^2
&
=\tfrac12\|A^{1/2}X_0\|^2+\int_0^t\langle\tY(s),\dd \tY(s) \rangle
\\ & \quad
+\tfrac12\int_0^t\Tr\big(A^{1/2}\Rk Q^{1/2}(A^{1/2}\Rk Q^{1/2})^*\big)\,\dd s
\\ &
=\tfrac12\|A^{1/2}X_0\|^2-\int_0^t\langle\tY(s),
\Ak\tY(s)+A^{1/2}\Fk(A^{-1/2}\tY(s))\rangle\,\dd s
\\ & \quad
+ \int_0^t\langle\tY(s),A^{1/2}\Rk\,\dd W(s)\rangle
+\tfrac12\int_0^t\Tr\big(A^{1/2}\Rk Q^{1/2}(A^{1/2}\Rk Q^{1/2})^*\big)\,\dd s.
\end{aligned}
\end{equation*}
Now, $\langle\tY(s), \Ak\tY(s)\rangle=\|\Ak^{1/2}\tY(s)\|^2$ and
\begin{equation*}
\begin{aligned}
\langle
&
\tY(s), A^{1/2}\Fk(A^{-1/2}\tY(s))\rangle
=\langle A^{1/2}A^{-1/2}\tY(s), A^{1/2}\Fk(A^{-1/2}\tY(s))\rangle
\\ & \qquad
=\langle A^{-1/2}\tY(s), \Fk(A^{-1/2}\tY(s))\rangle_1
\geq -C(\ts)\|A^{-1/2}\tY(s)\|_1^2=-C(\ts)\|\tY(s)\|^2,
\end{aligned}
\end{equation*}
so that
\begin{equation}\label{eq:pathineq}
\begin{aligned}
\tfrac12\|\tY(t)\|^2+\int_0^t\|\Ak^{1/2}\tY(s)\|^2\,\dd s
&
\leq
\tfrac12\|A^{1/2}X_0\|^2+C(\ts)\int_0^t\|\tY(s)\|^2\,\dd s
\\ &\quad
+\int_0^t\langle\tY(s),A^{1/2}\Rk\,\dd W(s)\rangle
+t\|A^{1/2}\Rk Q^{1/2}\|_{\HS}^2.
\end{aligned}
\end{equation}
If we drop the integral on the left side and
raise the remaining parts to the power $p\geq 1$, take the supremum
in time and then the expectation, we find, with the aid of \eqref{eq:psum} and
H\"older's inequality, that
\begin{equation}\label{eq:that}
\begin{aligned}
\EE\sup_{0\leq t\leq T}\|\tY(t)\|^{2p}
&
\leq
C(\ts,p,T)\Big(\EE\|A^{1/2}X_0\|^{2p}
+\EE \int_0^T\|\tY(s)\|^{2p}\,\dd s 
\\ & \quad +\EE \sup_{0\leq t\leq T}\Big|\int_0^t\langle
\tY(s),A^{1/2}\Rk\,\dd W(s)\rangle \Big|^p+\|A^{1/2}\Rk
Q^{1/2}\|_{\HS}^{2p}\Big).
\end{aligned}
\end{equation}
Using first that $|ab|\leq \tfrac12(a^2+b^2)$ and then the
Burkholder--Davies--Gundy inequality \eqref{eq:rbdg}, we obtain
\begin{equation}
\begin{aligned}
\EE \sup_{0\leq t\leq T}\Big|\int_0^t\langle
\tY(s),A^{1/2}\Rk\,\dd W(s)\rangle\Big|^p
&\leq C\Big(1+\EE \sup_{0\leq t\leq T}\Big|\int_0^t\langle
\tY(s),A^{1/2}\Rk\,\dd W(s)\rangle\Big|^{2p}\Big)
\\ & 
\leq C(p)\Big(1+\EE\Big(\int_0^T \|Q^{1/2}A^{1/2}\Rk \tY(s)\|^2\,\dd s\Big)^{p} \Big).
\end{aligned}
\end{equation}
As, by Lemma~\ref{lem:qarkbnd}, we have that
\begin{align*}
\|Q^{1/2}A^{1/2}\Rk \tY(s)\|\leq\|Q^{1/2}A^{1/2}\Rk
\|\|\tY(s)\|\leq \HSAQ \,\|\tY(s)\|,
\end{align*}
we can use H\"older's inequality to obtain
\begin{equation}\label{eq:this}
  \begin{aligned}
\EE \sup_{0\leq t\leq T}\Big|\int_0^t\langle
\tY(s),A^{1/2}\Rk\,\dd W(s)\rangle\Big|^p
\leq C(p,T,\HSAQ)\Big( 1+
\EE\int_0^T\|\tY(s)\|^{2p}\,\dd s\Big).
  \end{aligned}
\end{equation}
Since
$\EE\int_0^T\|\tY(s)\|^{2p}\,\dd s=\int_0^T\EE\|\tY(s)\|^{2p}\,\dd
s\leq\int_0^T\EE\sup_{0\leq r\leq s}\|\tY(r)\|^{2p}\,\dd s $,
we deduce from \eqref{eq:this} and \eqref{eq:that} that
\begin{equation*}
  \begin{split}
\EE\sup_{0\leq t\leq T}\|\tY(t)\|^{2p}
\leq C(\ts_0,p,T,\HSAQ,\EE\|X_0\|_1^{2p})
\Big(1+\int_0^T\EE \sup_{0\leq r\leq s}\|\tY(r)\|^{2p}\,\dd s\Big).
  \end{split}
\end{equation*}
By Gronwall's lemma, there is $C_1=C_1(\ts_0,p,T,\HSAQ,
\EE\|X_0\|_1^{2p})$ such that
\begin{equation}\label{eq:result}
\EE\sup_{0\leq t\leq T}\|\tY(t)\|^{2p}\leq C_1.
\end{equation}
Returning to \eqref{eq:pathineq}, we can find
$C_2=C_2(k_0,T,\HSAQ,\EE\|X_0\|_1^{2})$ such that 
\begin{equation*}
\EE\int_0^t\|\Ak^{1/2}\tY(s)\|^2\,\dd s\leq C_2,
\end{equation*}
by using that the expectation of the It\^o integral vanishes and
\eqref{eq:result} with $p=1$.
\end{proof}

\begin{corollary}\label{cor:intF}
  Let $\Xtk(t)$ be the solution of \eqref{eq:pspde} and, for some
  $p\geq 1$, let Assumption~\ref{ass:q} hold with $q=6p$. Then there
  is $C=C(\ts_0,p,T,\HSAQ,\EE\|X_0\|_1^{6p})>0$ such that
\begin{equation*}
\EE\int_0^T\|\fk(\Rk\Xtk(s))\|^{2p}\,\dd s\leq C.
\end{equation*}
\end{corollary}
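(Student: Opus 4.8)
The plan is to combine the polynomial growth bound \eqref{eq:fgrowth} for $f$ with the $\dH^1$ moment bound for $\Xtk$ supplied by Lemma~\ref{lem:tildebnd}. First I reduce the claim to a pathwise, pointwise-in-$s$ estimate on $\|\fk(\Rk\Xtk(s))\|$. Since $\fk=f\circ\Jk$, the growth bound \eqref{eq:fgrowth} gives
$$
\|\fk(\Rk\Xtk(s))\| = \|f(\Jk(\Rk\Xtk(s)))\| \le P^2\big(\|\Jk(\Rk\Xtk(s))\|_1\big),
\qquad P^2(t)=C_2(t^3+1).
$$

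Next I control the $\dH^1$-norm appearing inside $P^2$. By the linear growth bound \eqref{eq:Jklip1} of $\Jk$ on $\dH^1$ we have $\|\Jk(\Rk\Xtk(s))\|_1 \le C(\ts)\|\Rk\Xtk(s)\|_1$, and since $\Rk$ commutes with $A$ and is a contraction on $H$ it is also a contraction on $\dH^1$, so $\|\Rk\Xtk(s)\|_1\le\|\Xtk(s)\|_1$. Moreover $C(\ts)=(1-2\ts\beta^2)^{-1/2}\le(1-2\ts_0\beta^2)^{-1/2}$ uniformly for $\ts\le\ts_0$, which is where the hypothesis $2\ts_0\beta^2<1$ enters. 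Hence $\|\fk(\Rk\Xtk(s))\| \le C(\ts_0,\beta)\,(1+\|\Xtk(s)\|_1^3)$, and raising to the power $2p$ (using \eqref{eq:psum}, or simply $(a+b)^{2p}\le 2^{2p-1}(a^{2p}+b^{2p})$) yields
$$
\|\fk(\Rk\Xtk(s))\|^{2p} \le C(\ts_0,\beta,p)\,\big(1 + \|\Xtk(s)\|_1^{6p}\big).
$$

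Finally I integrate in $s$ over $[0,T]$, take expectations, and pull the supremum out:
$$
\EE\int_0^T\|\fk(\Rk\Xtk(s))\|^{2p}\,\dd s
\le C(\ts_0,\beta,p)\,T\Big(1 + \EE\sup_{0\le t\le T}\|\Xtk(t)\|_1^{6p}\Big),
$$
and the last expectation is finite by Lemma~\ref{lem:tildebnd} applied with $p$ replaced by $3p$ --- which is exactly why Assumption~\ref{ass:q} is invoked here with $q=6p$ --- giving the dependence $C=C(\ts_0,p,T,\HSAQ,\EE\|X_0\|_1^{6p})$ stated in the corollary. I do not expect a genuine obstacle here; the only points requiring attention are the uniform-in-$\ts$ boundedness of $C(\ts)$ and the bookkeeping of which moment bound (hence which value of $q$) Lemma~\ref{lem:tildebnd} must be applied with.
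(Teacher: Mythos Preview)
Your argument is correct and follows essentially the same route as the paper: apply the polynomial growth bound \eqref{eq:fgrowth} together with the $\dH^1$ linear growth \eqref{eq:Jklip1} of $\Jk$ and the contractivity of $\Rk$ to obtain $\|\fk(\Rk\Xtk(s))\|\le C(\ts_0)(1+\|\Xtk(s)\|_1^3)$, raise to the power $2p$, integrate, pull out the supremum, and invoke Lemma~\ref{lem:tildebnd} with exponent $6p$. Your version is in fact slightly more careful than the paper's in tracking the uniformity of the constant in $\ts\le\ts_0$.
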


\begin{proof}
Since, by \eqref{eq:fgrowth} and \eqref{eq:Jklip1},
\begin{equation*}
\|\fk(\Rk\Xtk(s))\|\leq C (\|\Jk(\Rk\Xtk(s))\|_1^3+1)
\leq C(k) (\|\Xtk(s)\|_1^3+1),
\end{equation*}
it holds that
\begin{align*}
\EE\int_0^T\|\fk(\Rk\Xtk(s))\|^{2p}\,\dd s
&
\leq C(k,p)
\EE\int_0^T  \big( \|\Xtk(s)\|_1^{6p}+1\big)\,\dd s
\\ & 
\leq C(k,p,T)
\EE\sup_{0\leq s\leq T}\big(\|\Xtk(s)\|_1^{6p}+1\big)
\end{align*}
and the last expression is bounded by Lemma~\ref{lem:tildebnd}.
\end{proof}

\subsection{Moment bounds for the solution of the split-step
  scheme}\label{subsec:dmomb}

We shall now prove similar results for the solution of
\eqref{eq:ss10}--\eqref{eq:ss1c}. We note that under
Assumption~\ref{ass:q} with $q=2$, $X_0$ and $\Wj$ belong to
$\dH^1$ almost surely. Since $\Rk$ and $\Fk$ map $\dH^1$ into itself
the upcoming calculations are motivated almost surely.

To begin, take $\dH^1$-inner products with $\Yj$ in \eqref{eq:ss1a}
and $\Zj$ in \eqref{eq:ss1b} to get
\begin{align*}
\|\Yj\|_1^2+\tfrac \ts 2\|\Yj\|_2^2
&=\langle\Xj,\Yj\rangle_1
\leq \tfrac12\|\Xj\|_1^2+ \tfrac12\|\Yj\|_1^2,
\\
\|\Zj\|_1^2+\ts\langle f(\Zj),\Zj\rangle_1
&=\langle\Yj,\Zj \rangle_1
\leq \tfrac12\|\Yj\|_1^2+ \tfrac12\|\Zj\|_1^2.
\end{align*}
We invoke \eqref{eq:diss2}, assume that $k \leq k_0$ with
$2k_0\beta^2<1$, and rearrange to get
\begin{align*}
\|\Yj\|_1^2+\ts \|\Yj\|_2^2\leq \|\Xj\|_1^2,  \quad
\|\Zj\|_1^2\leq \Big(1+\frac{2\ts\beta^2}{ 1-2\ts\beta^2}\Big) \|\Yj\|_1^2.
\end{align*}
Thus,
\begin{equation}\label{eq:zxbound}
\|\Zj\|_1^2\leq \big(1+\ts C(\ts_0)\big)\|\Xj\|_1^2.
\end{equation}
Further, by taking the squared $\dH^1$-norm of \eqref{eq:ss1c}, we get
\begin{equation}\label{eq:xnorm}
\|X^{j+1}\|_1^2+k \|X^{j+1}\|_2^2+\tfrac {k^2}4 \|X^{j+1}\|_3^2
=\|\Zj\|_1^2+2\langle\Zj,\Wj\rangle_1+\|\Wj\|^2_1.
\end{equation}
By inserting the bound on $\|\Zj\|_1^2$ from \eqref{eq:zxbound} into
\eqref{eq:xnorm}, we get
\begin{equation*}
\begin{aligned}
\|X^{j+1}\|_1^2+k \|X^{j+1}\|_2^2\leq
\|\Xj\|_1^2+kC\|\Xj\|_1^2+2\langle\Zj,\Wj\rangle_1+\|\Wj\|^2_1,
\end{aligned}
\end{equation*}
where $C=C(k_0)$.  Summing up from $j=0$ to $j=N-1$, we arrive at
\begin{equation}\label{eq:omegaineq}
\|X^N\|_1^2+\sum_{j=1}^{N}k\|\Xj\|_2^2
\leq\|X_0\|_1^2
+ C \sum_{j=0}^{N-1}\Big(k\|\Xj\|_1^2
+\langle\Zj,\Wj\rangle_1+\|\Wj\|^2_1\Big).
\end{equation}
Neglecting, for the moment, the sum on the left side of
\eqref{eq:omegaineq}, raising the  remaining terms to the power $p\geq
1$, and recalling \eqref{eq:psum}, we have (with $C=C (p,\ts_0)$)
\begin{equation*}
\begin{aligned}
\|X^N\|_1^{2p}
&\leq C \bigg(\|X_0\|_1^{2p}
+\Big(\sum_{j=0}^{N-1}k\|\Xj\|_1^2 \Big)^p
\\ & \quad
+\Big|\sum_{j=0}^{N-1} \langle\Zj,\Wj\rangle_1\Big|^p
+\Big(\sum_{j=0}^{N-1} \|\Wj\|^2_1\Big)^p \bigg)
\\ &
\leq C \Big(\|X_0\|_1^{2p} +N^{p-1}k^{p-1}\sum_{j=0}^{N-1}k\|\Xj\|_1^{2p}
\\ & \quad
+\Big|\sum_{j=0}^{N-1} \langle\Zj,\Wj\rangle_1\Big|^{2p}
+N^{p-1}\sum_{j=0}^{N-1} \|\Wj\|^{2p}_1 \Big).
\end{aligned}
\end{equation*}
Here we take the supremum in time and then the expectation,  make
repeated use of \eqref{eq:psum}, and apply
\eqref{eq:es1a}, the Burkholder--Davis--Gundy inequality
\eqref{eq:rbdg} and Lemma~\ref{lem:qarkbnd} to
conclude that, with 
\begin{align*}
C=C(p,k_0,t_N,\EE\|X_0\|_1^{2p},\HSAQ)>0  
\end{align*}
changing from step to step,
\begin{equation}
\begin{aligned}
&\EE\sup_{0\leq j\leq N}\|\Xj\|_1^{2p}
\leq C\Big(1+t_N^{p-1} \sum_{l=0}^{N-1}k\EE
\sup_{0\leq j\leq l}\|\Xj\|_1^{2p}
\\ &\qquad
+\EE\sup_{0\leq n\leq N}\Big|\sum_{j=0}^{n-1}
\langle\Zj,\Wj\rangle_1\Big|^{2p}+N^{p-1}\sum_{j=0}^{N-1}
\EE \|\Wj\|^{2p}_1 \Big)
\\ &
\leq C \Big(1+
t_N^{p-1}\sum_{l=0}^{N-1}k\EE \sup_{0\leq j\leq l}\|\Xj\|_1^{2p}
\\ & \qquad
+\EE\Big(\sum_{j=0}^{N-1}
k\|A^{1/2}Q^{1/2}A^{1/2}\Zj\|^2\Big)^{p}
+N^{p-1}\sum_{j=0}^{N-1}
\big(\EE \|\Wj\|^{2}_1\big)^p \Big)
\\ &
\leq C
\Big(1+
t_N^{p-1}\sum_{l=0}^{N-1}k\EE \sup_{0\leq j\leq l}\|\Xj\|_1^{2p}
\\ & \qquad
+t_N^{p-1}\sum_{j=0}^{N-1}
\ts\EE\|A^{1/2}Q^{1/2}\|^{2p}\|\Zj\|_1^{2p}
+t_N^{p-1}\sum_{j=0}^{N-1}\ts \|A^{1/2}Q^{1/2}\|_{\HS}^{2p} \Big)
\\ &
\leq  C\Big(1+\sum_{l=0}^{N-1}k\EE \sup_{0\leq j\leq l}\|\Xj\|_1^{2p}\Big).
\end{aligned}
\end{equation}
Gronwall's lemma thus yields
$C=C(p,k_0,t_N,\EE\|X_0\|_1^{2p},\HSAQ)$ such that
\begin{equation*}
\EE\sup_{0\leq j\leq N}\|\Xj\|_1^{2p}\leq C.
\end{equation*}
Also, since $\|\Jk\Rk\Xj\|^2_1\leq C(\ts_0)
\|\Xj\|_1^2$ by \eqref{eq:Jklip1} and the contraction property of $\Rk$ on $\dH^1$,
it follows that
\begin{equation*}
\EE\sup_{0\leq j\leq N}\|\Jk\Rk\Xj\|_1^{2p}\leq C.
\end{equation*}
Furthermore, returning to \eqref{eq:omegaineq}, we easily show that,
if $\EE\|X_0\|_2^2<\infty$, then
\begin{equation*}
\EE \sum_{j=0}^{N}k\|\Xj\|_2^2 \leq
C(k_0,t_N,\EE\|X_0\|_1^{2},\EE\|X_0\|_2^{2},\HSAQ).
\end{equation*}

We summarise the above findings in the following lemma.

\begin{lem}\label{lem:discretebounds}
  If Assumption~\ref{ass:q} is satisfied with $q=2p$ for some
  $p\geq 1$, and $\{\Xj\}_{j=0}^{N}$ is the solution of
  \eqref{eq:ss10}--\eqref{eq:ss1c}, then
\begin{equation*}
\EE\sup_{0\leq j\leq N}\|\Xj\|^{2p}
+\EE\sup_{0\leq j\leq N}\|\Xj\|_1^{2p}
+\EE\sup_{0\leq j\leq N}\|\Jk\Rk\Xj\|_1^{2p}\leq C_1,
\end{equation*}
where $C_1=C_1(k_0,p,T,\HSAQ,\EE\|X_0\|_1^{2p})>0$.  If, in addition, $\EE\|X_0\|_2^2<\infty$, then also
\begin{equation*}
\EE \sum_{j=0}^{N}k\|\Xj\|_2^2 \leq C_2,
\end{equation*}
where $C_2=C_2(k_0,T,\HSAQ,\EE\|X_0\|_1^{2},\EE\|X_0\|_2^{2})>0$.
\end{lem}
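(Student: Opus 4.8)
The plan is to prove both estimates by direct energy (variational) arguments on the split-step iteration \eqref{eq:ss1a}--\eqref{eq:ss1c}, rather than through the mild form. Under Assumption~\ref{ass:q} with $q=2$ the data $X_0$ and the increments $\Wj$ lie in $\dH^1$ almost surely, and since $\Rk$ and $\Jk$ map $\dH^1$ into itself, every iterate $\Xj$, $\Yj$, $\Zj$ belongs to $\dH^1$ a.s.; this makes the $\dH^1$-inner-product manipulations below rigorous pathwise. The key point is that a single chain of estimates will simultaneously yield the $\dH^1$-moment bound (after summation and discrete Gronwall) and, as a by-product retained on the left-hand side, the weighted $\dH^2$-sum bound.

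First I would test \eqref{eq:ss1a} with $\Yj$ and \eqref{eq:ss1b} with $\Zj$ in the $\dH^1$ inner product. Cauchy--Schwarz on the right-hand sides together with the dissipativity bound \eqref{eq:diss2} for $f$ and the smallness condition $2\ts_0\beta^2<1$ yields $\|\Yj\|_1^2+\ts\|\Yj\|_2^2\le\|\Xj\|_1^2$ and $\|\Zj\|_1^2\le\bigl(1+\ts\,C(\ts_0)\bigr)\|\Xj\|_1^2$, that is, \eqref{eq:zxbound}. Taking the squared $\dH^1$-norm of \eqref{eq:ss1c} gives the identity \eqref{eq:xnorm} for $\|X^{j+1}\|_1^2+\ts\|X^{j+1}\|_2^2+\tfrac{\ts^2}{4}\|X^{j+1}\|_3^2$ in terms of $\|\Zj\|_1^2$, the cross term $2\langle\Zj,\Wj\rangle_1$ and $\|\Wj\|_1^2$; inserting \eqref{eq:zxbound}, discarding the nonnegative $\|X^{j+1}\|_3^2$ term, and summing over $j=0,\dots,N-1$ produces the master inequality \eqref{eq:omegaineq}.

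From \eqref{eq:omegaineq} I would first drop the $\dH^2$-sum on the left, raise to the power $p\ge1$ using \eqref{eq:psum}, take $\sup_{0\le n\le N}$ and then $\EE$. The only delicate term is $\EE\sup_{n}\bigl|\sum_{j=0}^{n-1}\langle\Zj,\Wj\rangle_1\bigr|^{2p}$, which I would exhibit as the supremum of an It\^o integral $\int_0^{\cdot}\langle\eta(s),\dd W(s)\rangle$ with $\eta$ a suitable adapted piecewise-constant process built from $\Zj$, apply the Burkholder--Davis--Gundy inequality \eqref{eq:rbdg}, bound the resulting integrand by $\HSAQ\,\|\Zj\|_1$ using Lemma~\ref{lem:qarkbnd}, and then control $\|\Zj\|_1$ by $\|\Xj\|_1$ via \eqref{eq:zxbound}; the term $\sum_j\EE\|\Wj\|_1^{2p}$ is handled by \eqref{eq:es1a}, which gives $\EE\|\Wj\|_1^{2p}\le C_p\,\ts^p\,\HSAQ^{2p}$. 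After absorbing one arrives at
\begin{equation*}
\EE\sup_{0\le j\le N}\|\Xj\|_1^{2p}\le C\Bigl(1+\sum_{l=0}^{N-1}\ts\,\EE\sup_{0\le j\le l}\|\Xj\|_1^{2p}\Bigr),
\end{equation*}
and the discrete Gronwall lemma (Lemma~\ref{lem:gronwallDiscrete}) gives the stated bound on $\EE\sup_j\|\Xj\|_1^{2p}$. The bound on $\EE\sup_j\|\Xj\|^{2p}$ then follows from $\|\cdot\|\le C\|\cdot\|_1$, and the bound on $\EE\sup_j\|\Jk\Rk\Xj\|_1^{2p}$ from the $\dH^1$ linear growth \eqref{eq:Jklip1} of $\Jk$ together with the contraction property of $\Rk$ on $\dH^1$.

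For the last estimate I would return to \eqref{eq:omegaineq}, this time keeping $\sum_{j=1}^{N}\ts\|\Xj\|_2^2$ on the left and taking only first moments. Since $\Zj$ is $\cF_{t_j}$-measurable and $\Wj$ is independent of $\cF_{t_j}$, $\EE\langle\Zj,\Wj\rangle_1=0$, so the stochastic term drops out; using $\EE\|\Wj\|_1^2\le\ts\,\HSAQ^2$, the already-established case $p=1$ of the $\dH^1$ bound, and $\EE\|X_0\|_2^2<\infty$ yields $\EE\sum_{j=0}^{N}\ts\|\Xj\|_2^2\le C_2$. The main obstacle is the careful handling of the stochastic term: rewriting $\sum_j\langle\Zj,\Wj\rangle_1$ as an It\^o integral against the $Q$-Wiener process so that \eqref{eq:rbdg} and Lemma~\ref{lem:qarkbnd} are applicable, and checking throughout that the $\ts$-dependent constants (such as $C(\ts_0)$ in \eqref{eq:zxbound}) stay bounded as $\ts\to0$; the remaining steps are routine summation, H\"older's inequality, and Gronwall.
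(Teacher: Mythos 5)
Your proposal reproduces the paper's own proof essentially verbatim: the energy argument on \eqref{eq:ss1a}--\eqref{eq:ss1c} giving \eqref{eq:zxbound}, \eqref{eq:xnorm}, and the summed master inequality \eqref{eq:omegaineq}; raising to the power $p$ via \eqref{eq:psum}, treating the martingale term with \eqref{eq:rbdg} and Lemma~\ref{lem:qarkbnd}, and applying the discrete Gronwall lemma; and finally taking first moments in \eqref{eq:omegaineq} to isolate the $\dH^2$-sum. The approach is the same as the paper's, and the reasoning is correct.
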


\subsection{Auxiliary time interpolations}\label{subsec:baux}
For the error analysis in Section~\ref{sec:error} we introduce two
$\cF_t$-measurable time interpolations of $\Xj$:
the piecewise constant function
\begin{equation}\label{eq:xbar}
\Xb(t)=\Xj,\quad t\in[t_j,t_{j+1}),
\end{equation}
and the continuous stochastic interpolation
\begin{equation}\label{eq:xhat1}
  \begin{aligned}
    \Xh(t)=\Xj-(t-t_j)\Ak\Xj-(t-t_j)\Fk(\Xj)
+\Rk\big( W(t)- W(t_j)\big),\quad t\in[t_j,t_{j+1}).
  \end{aligned}
\end{equation}
Note that
\begin{equation*}
\Xh(t)=X_0-\int_0^t \big(\Ak\Xb(s)+\Fk(\Xb(s))\big)\,\dd s+\int_0^t\Rk \,\dd W(s)
\end{equation*}
and $\Xh(t_j)=\Xb(t_j)=\Xj$.

We will also need a certain bound on the stochastic interpolation,
$\Xh$. It is convenient to do this with the help of a result on the
difference between $\Xh$ and $\Xb$. As we will need a series of such
results below we begin by proving these.

\begin{prop}\label{prop:supEerr}
 If Assumption~\ref{ass:q} is satisfied with $q=6p$, then there exists
 \begin{align*}
C=C(k_0,p,T,\HSAQ,\EE\|X_0\|_1^{6p})>0
 \end{align*}
such that
\begin{equation*}
\sup_{0\leq t\leq T}\EE\|\Xh(t)-\Xb(t)\|^{2p}\leq C\ts^p.
\end{equation*}
\end{prop}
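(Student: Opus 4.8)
The plan is to use the explicit representation of $\Xh-\Xb$ on each subinterval and estimate the three terms it splits into, the point being to extract a full power of $\ts$ from each. Fix $j$ and $t\in[t_j,t_{j+1})$; combining \eqref{eq:xhat1} and \eqref{eq:xbar} gives
$$
\Xh(t)-\Xb(t)=-(t-t_j)\Ak\Xj-(t-t_j)\Fk(\Xj)+\Rk\big(W(t)-W(t_j)\big).
$$
Since $0\le t-t_j\le \ts$ and, by \eqref{eq:psum}, $\|a+b+c\|^{2p}\le 3^{2p-1}(\|a\|^{2p}+\|b\|^{2p}+\|c\|^{2p})$, it suffices to bound the $2p$-th moment of each of the three summands by $C\ts^{p}$, with $C$ independent of $j$, $t$, and $\ts\le\ts_0$.

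For the $\Ak$-term, the crude bound $\|\Ak\Xj\|\le C\ts^{-1}\|\Xj\|$ coming from \eqref{eq:AAk} and \eqref{eq:akb} would only give $\Ordo(1)$, so instead I would combine \eqref{eq:AAk} with the interpolated Yosida estimate \eqref{eq:arkgk0} at $s=1$, namely $\|\Ak\Xj\|\le\|A\Rk\Xj\|\le C\ts^{-1/2}\|\Xj\|_1$. Then $(t-t_j)^{2p}\|\Ak\Xj\|^{2p}\le C\ts^{2p}\ts^{-p}\|\Xj\|_1^{2p}=C\ts^{p}\|\Xj\|_1^{2p}$, and taking expectations and applying Lemma~\ref{lem:discretebounds} (with $q=2p$) closes this term. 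For the $\Fk$-term, use $\|\Rk\|\le1$ from \eqref{eq:resol}, then the polynomial growth \eqref{eq:fgrowth} and the linear growth \eqref{eq:Jklip1} of $\Jk$ on $\dH^1$ (together with the contractivity of $\Rk$ on $\dH^1$) to get $\|\Fk(\Xj)\|\le\|\fk(\Rk\Xj)\|\le C(\ts_0)(\|\Xj\|_1^3+1)$; hence $(t-t_j)^{2p}\|\Fk(\Xj)\|^{2p}\le C\ts^{2p}(\|\Xj\|_1^{6p}+1)\le C\ts_0^{p}\ts^{p}(\|\Xj\|_1^{6p}+1)$, and Lemma~\ref{lem:discretebounds} with $q=6p$ — this is where the hypothesis of the proposition is used in full strength — bounds the expectation by $C\ts^p$.

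Finally, $\Rk\big(W(t)-W(t_j)\big)=\int_{t_j}^t\Rk\,\dd W(r)$, so \eqref{eq:es1b} yields
$$
\EE\Big\|\int_{t_j}^t\Rk\,\dd W(r)\Big\|^{2p}\le C_p(t-t_j)^{p}\|\Rk Q^{1/2}\|_{\HS}^{2p}\le C_p\ts^{p}\|A^{1/2}Q^{1/2}\|_{\HS}^{2p},
$$
where the last inequality uses $\|\Rk Q^{1/2}\|_{\HS}=\|A^{-1/2}\Rk\,A^{1/2}Q^{1/2}\|_{\HS}\le\|A^{-1/2}\Rk\|\,\|A^{1/2}Q^{1/2}\|_{\HS}\le\lambda_1^{-1/2}\|A^{1/2}Q^{1/2}\|_{\HS}$. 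Adding the three bounds and taking the supremum over $t\in[0,T]$ gives the assertion. The only genuine subtlety is the $\Ak$-term: one must avoid the naive resolvent bound and instead spend one order of the $\dH^1$-regularity of $\Xj$ via \eqref{eq:arkgk0}; the rest is bookkeeping.
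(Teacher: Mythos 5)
Your proposal is correct and follows essentially the same route as the paper: write $\Xh(t)-\Xb(t)$ explicitly on $[t_j,t_{j+1})$, split into the $\Ak$-term, the $\Fk$-term, and the stochastic integral, bound each $2p$-th moment by $C\ts^p$ using the Yosida smoothing $\|\Ak\Xj\|\le C\ts^{-1/2}\|\Xj\|_1$, the polynomial growth of $\fk$ via \eqref{eq:Jklip1}, and \eqref{eq:es1b}, and then invoke Lemma~\ref{lem:discretebounds}. The only deviation is cosmetic: for the $\Fk$-term the paper first smooths $\|\Rk y\|\le C\ts^{-1/2}\|A^{-1/2}y\|$ and then applies the local Lipschitz bound \eqref{eq:local} with $f(0)=0$, whereas you just use $\|\Rk\|\le1$ together with \eqref{eq:fgrowth} and absorb the spare $\ts^p$ into the constant via $\ts\le\ts_0$; both give $C\ts^p P_{6p}(\|\Xj\|_1)$ and require $q=6p$, so the difference is immaterial.
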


\begin{proof}
Note that if $t\in [t_j,t_{j+1})$, then, from the definitions of $\Xb$
and $\Xh$ in \eqref{eq:xbar} and \eqref{eq:xhat1}, respectively, we
have that
\begin{equation*}
\Xh(t)-\Xb(t)=-(t-t_j)\Ak\Xj-(t-t_j)\Fk(\Xj)+\int_{t_j}^t\Rk\,\dd  W(s).
\end{equation*}
Therefore,
\begin{equation}\label{eq:basest}
\begin{aligned}
\sup_{t_j\leq t\leq t_{j+1}}\EE\|\Xh(t)-\Xb(t)\|^{2p}&\leq
C(p)\Big(k^{2p}\EE\|\Ak\Xj\|^{2p} +k^{2p}\EE\|\Fk(\Xj)\|^{2p}\\
&\quad+\sup_{t_j\leq t\leq t_{j+1}}\EE\|\int_{t_j}^t\Rk\,\dd W(s)\|^{2p}\Big).
\end{aligned}
\end{equation}
By \eqref{eq:AAk} and \eqref{eq:AhRk}, we obtain
\begin{equation}\label{eq:triv}
\|\Ak\Xj\|\leq k^{-1/2}C\|\Xj\|_{1}.
\end{equation}
Moreover,
\begin{equation}\label{eq:Fkgrej}
\begin{aligned}
\|\Fk(\Xj)\|&=\|\Rk\fk(\Rk\Xj)\| \leq
k^{-1/2}C\|A^{-1/2}\fk(\Rk\Xj)\|\\
&\leq k^{-1/2} C (\ts_0)P_2(\|\Xj\|_1)\|\Xj\|\leq k^{-1/2} C(\ts_0)
P_3(\|\Xj\|_1),
\end{aligned}
\end{equation}
where we used, again, \eqref{eq:AhRk} and \eqref{eq:local}, utilising
also that $f(0)=0$, \eqref{eq:Jklip1} and finally the boundedness of $\Rk$.  For
the stochastic integral, by \eqref{eq:es1b},
\begin{equation}\label{eq:stocint}
\sup_{t_j\leq t\leq t_{j+1}}\EE\Big\|\int_{t_j}^t\Rk\,\dd W(s)\Big\|^{2p}\leq
C(p)\ts^{p}\|\Rk Q^{1/2}\|^{2p}_{\HS}.
\end{equation}
Thus, inserting the bounds in \eqref{eq:triv}, \eqref{eq:Fkgrej}, and
\eqref{eq:stocint} into \eqref{eq:basest}, we see that
\begin{equation*}
\begin{aligned}
\sup_{0\leq t\leq T}\EE\|\Xh(t)-\Xb(t)\|^{2p}&=\sup_{0\leq j\leq
  N-1}\sup_{t_j\leq t\leq t_{j+1}}\EE\|\Xh(t)-\Xb(t)\|^{2p}\\
&\leq
k^pC(\ts_0,p,\HSAQ)\Big(1+\sup_{0\leq j\leq N}\EE P_{6p}(\|\Xj\|_1)\Big)\\
&
\leq
\ts^pC(\ts_0,p,\HSAQ)\Big(1+\EE \sup_{0\leq j\leq N} P_{6p}(\|\Xj\|_1)\Big).
\end{aligned}
\end{equation*}
The claim now follows from Lemma~\ref{lem:discretebounds}.
\end{proof}

\begin{prop}\label{prop:Eoneerr}
  Assume that Assumption~\ref{ass:q} is satisfied with $q=6$ and
also  $\EE\|X_0\|_2^2<\infty$. Then there is
  $C=C(k_0,T,\HSAQ,\EE\|X_0\|_1^{6},\EE\|X_0\|_2^{2})>0$ such that
\begin{equation*}
\int_0^t\EE\|\Xh(s)-\Xb(s)\|_1^2\,\dd s\leq C\ts,\quad t\in [0,T].
\end{equation*}
\end{prop}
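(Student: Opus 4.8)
The plan is to mimic the proof of Proposition~\ref{prop:supEerr}, now measuring in the $\dH^1$-norm and additionally integrating in time. Fix $t\in[0,T]$. Since the interpolation error is supported gridwise,
\[
\int_0^t\EE\|\Xh(s)-\Xb(s)\|_1^2\,\dd s\le\sum_{j=0}^{N-1}\int_{t_j}^{t_{j+1}}\EE\|\Xh(s)-\Xb(s)\|_1^2\,\dd s,
\]
so it suffices to estimate the summands. On $[t_j,t_{j+1})$ we have, from \eqref{eq:xbar} and \eqref{eq:xhat1}, $\Xh(s)-\Xb(s)=-(s-t_j)\Ak\Xj-(s-t_j)\Fk(\Xj)+\int_{t_j}^s\Rk\,\dd W(r)$. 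Taking $\dH^1$-norms, squaring, and using $(a+b+c)^2\le3(a^2+b^2+c^2)$ together with $\int_{t_j}^{t_{j+1}}(s-t_j)^2\,\dd s=\tfrac{k^3}{3}$ gives
\begin{align*}
\int_{t_j}^{t_{j+1}}\EE\|\Xh(s)-\Xb(s)\|_1^2\,\dd s
&\le k^3\,\EE\|\Ak\Xj\|_1^2+k^3\,\EE\|\Fk(\Xj)\|_1^2\\
&\quad+3k\sup_{t_j\le s\le t_{j+1}}\EE\Big\|\int_{t_j}^s\Rk\,\dd W(r)\Big\|_1^2.
\end{align*}

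I would then bound the three terms. For the first, the crucial estimate is $\|\Ak\Xj\|_1=\|A^{1/2}\Ak\Xj\|\le\|A^{3/2}\Rk\Xj\|\le Ck^{-1/2}\|\Xj\|_2$, obtained by applying \eqref{eq:AAk} to $A^{1/2}\Xj$ and then \eqref{eq:AhRk} to $A\Xj$; hence $k^3\,\EE\|\Ak\Xj\|_1^2\le Ck^2\,\EE\|\Xj\|_2^2$, and summing over $j$ gives $Ck\sum_j k\,\EE\|\Xj\|_2^2\le Ck$ by the second bound of Lemma~\ref{lem:discretebounds} --- this is precisely where the hypothesis $\EE\|X_0\|_2^2<\infty$ is used. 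For the second term, $\|\Fk(\Xj)\|_1=\|A^{1/2}\Rk\fk(\Rk\Xj)\|\le Ck^{-1/2}\|\fk(\Rk\Xj)\|\le Ck^{-1/2}P_3(\|\Xj\|_1)$, using \eqref{eq:AhRk}, the growth bound \eqref{eq:fgrowth}, the linear growth \eqref{eq:Jklip1} of $\Jk$ on $\dH^1$, and the contractivity of $\Rk$ on $\dH^1$; hence $k^3\,\EE\|\Fk(\Xj)\|_1^2\le Ck^2\,\EE P_6(\|\Xj\|_1)$, and summing over $j$ gives $Ck\,t_N\sup_j\EE P_6(\|\Xj\|_1)\le Ck$ by the first bound of Lemma~\ref{lem:discretebounds} with $p=3$, which is exactly why the assumption is taken with $q=6$. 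For the third term, $\|\int_{t_j}^s\Rk\,\dd W(r)\|_1=\|\int_{t_j}^s A^{1/2}\Rk\,\dd W(r)\|$, so \eqref{eq:es1b} with $p=1$ (an equality there), together with $\|A^{1/2}\Rk Q^{1/2}\|_{\HS}=\|\Rk A^{1/2}Q^{1/2}\|_{\HS}\le\HSAQ$ (by \eqref{eq:hs}, cf.\ Lemma~\ref{lem:qarkbnd}), bounds it by $(s-t_j)\HSAQ^2\le k\HSAQ^2$; this contributes at most $3k^2\HSAQ^2$ per interval, and $\sum_j 3k^2\HSAQ^2=3t_N\HSAQ^2\,k\le Ck$.

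Summing the three contributions yields $\int_0^t\EE\|\Xh(s)-\Xb(s)\|_1^2\,\dd s\le Ck$ with $C$ of the asserted dependence. The one delicate point is the $\Ak$-term: the crude operator-norm bound $\|\Ak\|_{\mathcal{B}(H)}\le Ck^{-1}$ would only give $\|\Ak\Xj\|_1\le Ck^{-1}\|\Xj\|_1$, and after the factor $k^3$ from the time integration and the summation over the $N=T/k$ grid points the gain in $k$ would be lost; one must instead trade one negative power of $k$ for one extra spatial derivative, $\|\Ak\Xj\|_1\le Ck^{-1/2}\|\Xj\|_2$, and absorb the resulting $\sum_j k\|\Xj\|_2^2$ into the discrete energy estimate of Lemma~\ref{lem:discretebounds}. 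The remaining steps are the routine bookkeeping already carried out in the proof of Proposition~\ref{prop:supEerr}.
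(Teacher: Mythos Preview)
Your proof is correct and follows essentially the same approach as the paper: the same pointwise decomposition of $\Xh(s)-\Xb(s)$ on each subinterval, the same key gains $\|\Ak\Xj\|_1\le Ck^{-1/2}\|\Xj\|_2$ and $\|\Fk(\Xj)\|_1\le Ck^{-1/2}\|\fk(\Rk\Xj)\|\le Ck^{-1/2}P_3(\|\Xj\|_1)$, and the same appeal to the two bounds of Lemma~\ref{lem:discretebounds} to close the estimate. The only cosmetic difference is that you integrate $(s-t_j)^2$ exactly to get $k^3/3$, whereas the paper first bounds $(t-t_j)\le k$ pointwise; your treatment of the stochastic term via $\|A^{1/2}\Rk Q^{1/2}\|_{\HS}\le\HSAQ$ is also slightly more explicit than the paper's (which writes $\|\Rk Q^{1/2}\|_{\HS}$ in what appears to be a typo).
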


\begin{proof}
  As in the beginning of the previous proof we have, for
  $t\in [t_j,t_{j+1})$, that
\begin{equation*}
\begin{aligned}
\EE\|\Xh(t)-\Xb(t)\|_1^2&\leq \ts\Big(\ts\big(\EE\|\Ak\Xj\|_1^{2}
+\EE\|\Fk(\Xj)\|_1^{2}\big)+\|\Rk Q^{1/2}\|_{\HS}^{2}\Big)\\
&\leq \ts\Big(\EE\|\Xj\|_2^{2}
+\EE\|\fk(\Rk\Xj)\|^{2}+\|\Rk Q^{1/2}\|_{\HS}^{2}\Big),
\end{aligned}
\end{equation*}
as $\|\Ak\Xj\|_1\leq k^{-1/2}C\|\Xj\|_2$ and that
\begin{align*}
\|\Fk(\Xj)\|_1=\|A^{1/2}\Rk\fk(\Rk\Xj)\|\leq 
k^{-1/2}C\|\fk(\Rk\Xj)\|.
\end{align*}
Also, $\|\fk(\Rk\Xj)\|\leq C(\ts_0)P_3(\|\Xj\|_1)$ so if
$t\leq t_{n+1}$, then
\begin{equation*}
\begin{aligned}
&\int_0^t\EE\|\Xh(t)-\Xb(t)\|_1^2\leq
C(\ts_0)\ts\sum_{j=0}^{n}\ts\Big(\EE\|\Xj\|_2^2+\EE P_6(\|\Xj\|_1)+\|\Rk
Q^{1/2}\|_{\HS}^{2}\Big)\\
& \qquad
\leq \ts C(\ts_0,T,\HSAQ)\Big(1+\EE\sup_{0\leq j\leq n} \|\Xj\|_{1}^{6}+\sum_{j=0}^{n}k\EE\|\Xj\|_2^2\Big).
\end{aligned}
\end{equation*}
The claim now follows from Lemma~\ref{lem:discretebounds}.
\end{proof}

We will use the following result.

\begin{lem}\label{lem:justabove}
  If  Assumption~\ref{ass:q} is satisfied with
  $q=6p$, then there is a constant 
  \begin{align*}
  C=C(k_0,p,T,\HSAQ,\EE\|X_0\|_1^{6p})>0    
  \end{align*}
such that
\begin{equation*}
\int_0^t\EE\|\Xh(s)-\Xb(s)\|_1^{2p}\,\dd s\leq  C,\quad t\in [0,T].
\end{equation*}
\end{lem}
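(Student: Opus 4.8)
The plan is to argue exactly as in the proofs of Proposition~\ref{prop:supEerr} and Proposition~\ref{prop:Eoneerr}, the only change being that, since we now aim at mere boundedness rather than at a rate in $\ts$, we may afford the cruder bounds $\|\Ak\Xj\|_1\le C\ts^{-1}\|\Xj\|_1$ and $\|\Fk(\Xj)\|_1\le C\ts^{-1/2}\|\fk(\Rk\Xj)\|$, which involve only $\dH^1$-norms of $\Xj$; these are controlled to every order by Lemma~\ref{lem:discretebounds}, whereas the sharper estimate of Proposition~\ref{prop:Eoneerr} rests on the $\dH^2$-bound, which is available only in mean square in time.

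First I would fix $t\in[0,T]$ and use nonnegativity of the integrand to write $\int_0^t\EE\|\Xh(s)-\Xb(s)\|_1^{2p}\,\dd s\le\sum_{j=0}^{N-1}\int_{t_j}^{t_{j+1}}\EE\|\Xh(s)-\Xb(s)\|_1^{2p}\,\dd s$. On $[t_j,t_{j+1})$ the definitions \eqref{eq:xbar} and \eqref{eq:xhat1} give
\begin{equation*}
\Xh(s)-\Xb(s)=-(s-t_j)\Ak\Xj-(s-t_j)\Fk(\Xj)+\int_{t_j}^s\Rk\,\dd W(r),
\end{equation*}
an expression lying in $\dH^1$ almost surely. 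Taking $\dH^1$-norms, raising to the power $2p$, and applying \eqref{eq:psum} leaves three terms. For the first, by \eqref{eq:AAk} and \eqref{eq:arkg} (with $\alpha=3$, $s=2$), using that $\Ak$ and $\Rk$ commute with $A$, one has $\|\Ak\Xj\|_1\le\|A\Rk\Xj\|_1=\|A^{3/2}\Rk\Xj\|\le C\ts^{-1}\|\Xj\|_1$, whence $(s-t_j)^{2p}\|\Ak\Xj\|_1^{2p}\le C\|\Xj\|_1^{2p}$; the factor $\ts^{2p}$ is entirely consumed here, which is exactly why the final bound carries no power of $\ts$. For the second, \eqref{eq:AhRk} yields $\|\Fk(\Xj)\|_1=\|A^{1/2}\Rk\fk(\Rk\Xj)\|\le C\ts^{-1/2}\|\fk(\Rk\Xj)\|$, while \eqref{eq:fgrowth} and \eqref{eq:Jklip1} combined with the contractivity of $\Rk$ on $\dH^1$ give $\|\fk(\Rk\Xj)\|\le C(\ts_0)P_3(\|\Xj\|_1)$, so that $(s-t_j)^{2p}\|\Fk(\Xj)\|_1^{2p}\le\ts^pC(\ts_0,p)P_{6p}(\|\Xj\|_1)\le C(\ts_0,p)P_{6p}(\|\Xj\|_1)$. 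For the third, $\|\int_{t_j}^s\Rk\,\dd W(r)\|_1=\|\int_{t_j}^sA^{1/2}\Rk\,\dd W(r)\|$, so \eqref{eq:es1b} with $R=A^{1/2}\Rk$, together with $\|A^{1/2}\Rk Q^{1/2}\|_{\HS}=\|\Rk A^{1/2}Q^{1/2}\|_{\HS}\le\HSAQ$ (by \eqref{eq:hs} and $\|\Rk\|\le1$), yields $\EE\|\int_{t_j}^s\Rk\,\dd W(r)\|_1^{2p}\le C(p)(s-t_j)^p\HSAQ^{2p}$.

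Collecting these bounds, taking expectations, and integrating over $s\in[t_j,t_{j+1})$ gives $\int_{t_j}^{t_{j+1}}\EE\|\Xh(s)-\Xb(s)\|_1^{2p}\,\dd s\le\ts\,C(\ts_0,p,\HSAQ)\big(1+\EE\|\Xj\|_1^{6p}\big)$. Summing over $j$ and using $\sum_{j=0}^{N-1}\ts=T$ then gives $\int_0^t\EE\|\Xh(s)-\Xb(s)\|_1^{2p}\,\dd s\le T\,C(\ts_0,p,\HSAQ)\big(1+\EE\sup_{0\le j\le N}\|\Xj\|_1^{6p}\big)$, and the supremum is finite by Lemma~\ref{lem:discretebounds} because Assumption~\ref{ass:q} holds with $q=6p$; this is the asserted estimate, with a constant of the stated dependence. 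I expect the only genuine point of care to be the bookkeeping of powers of $\ts$: one must verify that the negative powers produced by the Yosida-type bounds on $\Ak$ and $\Fk$, once raised to the power $2p$, are absorbed by the $(s-t_j)^{2p}$ factor. This forces us to surrender a factor of $\ts$ compared with Propositions~\ref{prop:supEerr}--\ref{prop:Eoneerr}, which is precisely why the bound is only $O(1)$, and it is also the reason that only $\dH^1$-moments of $\Xj$ (to arbitrary order, supplied by Lemma~\ref{lem:discretebounds}) are needed here, rather than $\dH^2$-moments of order larger than two.
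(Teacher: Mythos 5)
Your proposal is correct and follows essentially the same route as the paper: bound $\Xh(s)-\Xb(s)$ on each subinterval using $\|\Ak\Xj\|_1\le Ck^{-1}\|\Xj\|_1$ and a Yosida-type bound on $\|\Fk(\Xj)\|_1$, note that the resulting negative powers of $\ts$ are absorbed by the $(s-t_j)^{2p}$ factor, and then sum, invoking Lemma~\ref{lem:discretebounds} with $q=6p$. The only cosmetic difference is that you use the slightly sharper bound $\|\Fk(\Xj)\|_1\le Ck^{-1/2}P_3(\|\Xj\|_1)$ where the paper writes a $k^{-1}$ factor; both suffice for the $O(1)$ conclusion.
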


\begin{proof}
  Using that $\|\Ak\Xj\|_1\leq k^{-1}C\|\Xj\|_1$ and
  $\|\Fk(\Xj)\|\leq k^{-1}CP_3(\|Xj\|_1)$, it follows in a similar way
  as in the proof of Proposition~\ref{prop:supEerr} that if
  $t\leq t_{n+1}$, then
\begin{equation*}
 \int_0^t\EE\|\Xh(s)-\Xb(s)\|_1^{2p}\, \dd s\leq  C(\ts_0,p,T,\HSAQ)\Big(1+\EE\sup_{0\leq
   j\leq n} \|\Xj\|_1^{6p}\Big).
\end{equation*}
\end{proof}

\begin{corollary}\label{cor:jkrkxhat}
  If Assumption~\ref{ass:q} is satisfied with $q=6p$, then there is
  \begin{align*}
  C=C(k_0,p,T,\HSAQ,\EE\|X_0\|_1^{6p})>0    
  \end{align*}
such that
\begin{equation*}
\EE\int_0^t\|\Xh(s)\|_1^{2p}\,\dd s\leq C,\quad t\in [0,T].
\end{equation*}
\end{corollary}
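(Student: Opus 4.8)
The idea is simply to split $\Xh$ into the piecewise constant part $\Xb$ and the fluctuation $\Xh-\Xb$, and to bound each separately using results already established. Concretely, I would write, using \eqref{eq:psum} with $p$ replaced by $2p$ (or just convexity of $x\mapsto x^{2p}$),
\begin{equation*}
\|\Xh(s)\|_1^{2p}\leq 2^{2p-1}\big(\|\Xb(s)\|_1^{2p}+\|\Xh(s)-\Xb(s)\|_1^{2p}\big),
\end{equation*}
and then apply Tonelli's theorem to interchange $\EE$ and $\int_0^t$.

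For the first term, since $\Xb(s)=\Xj$ for $s\in[t_j,t_{j+1})$, one has $\EE\|\Xb(s)\|_1^{2p}\leq \EE\sup_{0\leq j\leq N}\|\Xj\|_1^{2p}$ uniformly in $s$, so
\begin{equation*}
\int_0^t\EE\|\Xb(s)\|_1^{2p}\,\dd s\leq T\,\EE\sup_{0\leq j\leq N}\|\Xj\|_1^{2p},
\end{equation*}
which is finite by Lemma~\ref{lem:discretebounds} (whose hypothesis $q=2p$ is implied by the assumed $q=6p$). For the second term, $\int_0^t\EE\|\Xh(s)-\Xb(s)\|_1^{2p}\,\dd s\leq C$ is precisely Lemma~\ref{lem:justabove}, which requires exactly $q=6p$. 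Adding the two bounds gives the claim, with $C=C(k_0,p,T,\HSAQ,\EE\|X_0\|_1^{6p})$.

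\textbf{Main obstacle.} There is essentially no difficulty here beyond bookkeeping: the only thing to check is that the single moment assumption $q=6p$ is strong enough to feed both Lemma~\ref{lem:discretebounds} (which needs only $q=2p$) and Lemma~\ref{lem:justabove} (which needs $q=6p$), so the $6p$ threshold is sharp for this combined estimate. The result is an immediate corollary of the two preceding lemmata via the triangle inequality.
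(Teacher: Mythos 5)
Your proof is correct and is essentially identical to the paper's: split $\|\Xh(s)\|_1\le\|\Xb(s)\|_1+\|\Xh(s)-\Xb(s)\|_1$, bound the first piece by Lemma~\ref{lem:discretebounds} and the second by Lemma~\ref{lem:justabove}. The only extra content you add is the explicit accounting that $q=6p$ covers the weaker requirement $q=2p$ for Lemma~\ref{lem:discretebounds}, which is a fine observation but not a new idea.
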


\begin{proof}
As $\|\Xh(s)\|_1\leq\|\Xb(s)\|_1+\|\Xb(s)-\Xh(s)\|_1$,
the claim follows from Lemma~\ref{lem:discretebounds} and
Lemma~\ref{lem:justabove}.
\end{proof}

\section{Error bounds}\label{sec:error}
We shall analyse the error $e(t)$ by splitting it as
\begin{equation*}
e(t):=X(t)-\Xh(t)=\big(X(t)-\Xtk(t)\big)+\big(\Xtk(t)-\Xh(t)\big)=:e^1(t)+e^2(t).
\end{equation*}

\subsection{A bound for $e^1$}
We start comparing the solution of the perturbed problem
\eqref{eq:pspde} to the solution of the original problem
\eqref{eq:sac}.
The main line of argument is as follows.  In Lemma~\ref{lem:XaX} we
establish that $\Xtk$ converges in $L_2(\Omega,\cF,P;C([0,T],H))$ to a
mild solution of Allen--Cahn equation \eqref{eq:sac}, in particular, to
the unique variational solution.  We show, in the proof of
Theorem~\ref{thm:tildeerror}, that the sequence $\{\Xtk\}_{k>0}$
satisfies
\begin{equation*}
\EE\sup_{0\leq t\leq T}\|\Xt_{\ts_1}(t)-\Xt_{\ts_2}(t)\|^2\leq C \ts_2,
\quad \ts_1\le\ts_2,
\end{equation*}
for some constant $C$ depending on the initial data and $Q$. Thus,
when $\ts_1\to 0$ we get the desired bound for $e_1$.

The first step is to analyse the error in the stochastic convolution
terms. The proof is analogous to that of \cite[Theorem 2.1]{MR3273327}
and is based on the factorisation method of DaPrato and Zabczyk
\cite[Chapter 5]{DPZ} and the deterministic error estimates from
Lemma~\ref{thm:semierr}. We omit the details of the proof. We also
note that the rate of convergence is suboptimal in terms of the
regularity of the noise but a sharper (almost order $k$ instead of
$k^{1/2}$) estimate is not needed for our purposes and would require
and extended range for $s$ and $r$ in the deterministic error
estimates in Lemma~\ref{thm:semierr} (compare with
\cite[(2.2)]{MR3273327}).

\begin{lem}\label{lem:converrsup}
Let $W_A(t)=\int_0^t E(t-s)\,\dd W(s)$ and $W_{\Ak}(t)=\int_0^t
\Ek(t-s)\Rk\,\dd W(s)$ and suppose that $\|A^{1/2}Q^{1/2}\|_{\HS}^2<
\infty$ and  $2p\geq 1$. Then, with $C=C(T,p,\HSAQ)>0$,
\begin{equation}\label{eq:converrsup}
\EE\sup_{0\leq t\leq T}\|W_{A}(t)-W_{\Ak}(t)\|^{2p}\leq Ck^{p}.
\end{equation}
\end{lem}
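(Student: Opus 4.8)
The plan is to follow the factorisation method of Da Prato and Zabczyk, exactly as the remark preceding the statement suggests. First I would fix $\alpha \in (0,1/2)$ (any such $\alpha$ works since the only regularity of the noise being used is $\|A^{1/2}Q^{1/2}\|_{\HS}<\infty$, which gives a small amount of spatial smoothness) and write the stochastic convolutions in factorised form: there is a constant $c_\alpha$ such that
\begin{equation*}
W_A(t)=c_\alpha\int_0^t(t-\sigma)^{\alpha-1}E(t-\sigma)Y_A(\sigma)\,\dd\sigma,\qquad
Y_A(\sigma)=\int_0^\sigma(\sigma-r)^{-\alpha}E(\sigma-r)\,\dd W(r),
\end{equation*}
and analogously $W_{\Ak}(t)=c_\alpha\int_0^t(t-\sigma)^{\alpha-1}\Ek(t-\sigma)\Rk Y_{\Ak}(\sigma)\,\dd\sigma$ with $Y_{\Ak}(\sigma)=\int_0^\sigma(\sigma-r)^{-\alpha}\Ek(\sigma-r)\Rk\,\dd W(r)$. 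The difference then splits, after adding and subtracting $\Ek(t-\sigma)\Rk Y_A(\sigma)$ inside the outer integral, into a term where the \emph{semigroups} differ acting on the same process $Y_A$, and a term where the \emph{processes} $Y_A-Y_{\Ak}$ differ, convolved against the (uniformly bounded) kernel $(t-\sigma)^{\alpha-1}\Ek(t-\sigma)\Rk$.

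Second I would estimate the two terms separately. For the semigroup-difference term I apply the deterministic estimate \eqref{eq:semierr2} from Lemma~\ref{thm:semierr} with a suitable choice of the pair $(s,r)$: taking $s$ slightly above $0$ so that $k^{s/2}$ gives the rate while $r<s$ keeps the singularity $(t-\sigma)^{-r/2}$ integrable against $(t-\sigma)^{\alpha-1}$; concretely one needs $\alpha-1-r/2>-1$, i.e. $r<2\alpha$, which is compatible with $0\le r\le s\le 2$ and $r\le s$. This produces a bound $Ck^{s/2}\int_0^t(t-\sigma)^{\alpha-1-r/2}\|Y_A(\sigma)\|\,\dd\sigma$, and after raising to the power $2p$, using H\"older in $\sigma$ against the integrable kernel, taking $\sup_{t\le T}$ and expectation, I am left with $Ck^{sp}\,\EE\sup_\sigma\|Y_A(\sigma)\|^{2p}$. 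For the process-difference term the kernel $(t-\sigma)^{\alpha-1}\|\Ek(t-\sigma)\Rk\|\le C(t-\sigma)^{\alpha-1}$ is already integrable, so the same H\"older argument reduces matters to $\EE\sup_\sigma\|Y_A(\sigma)-Y_{\Ak}(\sigma)\|^{2p}$.

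Third I would control the moments of $Y_A$ and of $Y_A-Y_{\Ak}$. Since $Y_A(\sigma)$ is Gaussian, by \eqref{eq:es1a} it suffices to bound $\EE\|Y_A(\sigma)\|^2=\int_0^\sigma(\sigma-r)^{-2\alpha}\|E(\sigma-r)Q^{1/2}\|_{\HS}^2\,\dd r$; writing $\|E(\sigma-r)Q^{1/2}\|_{\HS}\le \|A^{-1/2}E(\sigma-r)\|\,\|A^{1/2}Q^{1/2}\|_{\HS}$ and using \eqref{eq:anal0} gives a factor $(\sigma-r)^{-1/2}$, so the integrand is $(\sigma-r)^{-2\alpha-1/2}$, integrable precisely when $2\alpha+1/2<1$, i.e. $\alpha<1/4$ — so I would fix $\alpha\in(0,1/4)$ from the start. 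This yields $\sup_\sigma\EE\|Y_A(\sigma)\|^2\le C(T,\HSAQ)$, hence $\EE\sup_\sigma\|Y_A(\sigma)\|^{2p}\le C$ by \eqref{eq:es1a} after also passing the supremum inside (the process being a time-integral of a fixed-kernel Gaussian integral, a Kolmogorov-type continuity / standard factorisation argument handles the supremum, exactly as in \cite[Chapter 5]{DPZ}). For the difference $Y_A-Y_{\Ak}$ I would split again into semigroup difference on the kernel, using \eqref{eq:semierr2} with $r$ below $1/2$ so that the time singularity stays integrable and a $k^{s/2}$ factor appears, plus the $\Rk-I$ correction handled via \eqref{eq:Rkdiffnorm}; the Hilbert--Schmidt norms are again estimated through $\|A^{1/2}Q^{1/2}\|_{\HS}$ and \eqref{eq:anal0}.

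Finally, combining the two contributions gives $\EE\sup_{t\le T}\|W_A(t)-W_{\Ak}(t)\|^{2p}\le Ck^{sp}$ for some $s>0$ that we may take to be $1$ (the deterministic estimate \eqref{eq:semierr2} allows $s$ up to $2$, well above what is needed), which is the claimed $Ck^p$. The main obstacle is the bookkeeping of the exponents: one must simultaneously satisfy $\alpha<1/4$ (so that $Y_A$ has finite second moments given only $H^1$-regular noise), $r<2\alpha$ (so that the outer convolution of the semigroup-difference kernel converges), and $0\le r\le s\le 2$ with $s$ large enough to deliver the full rate $k^p$ — threading all of these is the delicate point, though there is enough room since we only need $s=1$. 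Given that the argument is a routine, if slightly lengthy, adaptation of \cite[Theorem~2.1]{MR3273327}, I would, as the authors do, present the splitting and the choice of exponents and refer to that source for the remaining computations.
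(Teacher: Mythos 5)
Your plan is the one the paper points to: the factorisation method, a split of the difference into a semigroup-error term and a $Y$-difference term, and the deterministic estimates from Lemma~\ref{thm:semierr}, referring to \cite[Theorem~2.1]{MR3273327} for the remaining computations. The paper itself omits the proof entirely, so there is nothing finer to compare against at that level. There are, however, two points in your execution that do not hold up as written.

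First, the claim that $\|A^{-1/2}E(\sigma-r)\|$ contributes a factor $(\sigma-r)^{-1/2}$ is incorrect. Estimate \eqref{eq:anal0} covers $A^{\alpha}E(t)$ only for $\alpha\ge 0$; for a negative power, $A^{-1/2}$ is simply a bounded operator (the spectrum of $A$ is bounded away from zero), so $\|A^{-1/2}E(\sigma-r)\|\le\|A^{-1/2}\|$ uniformly. Your integrand is therefore $(\sigma-r)^{-2\alpha}$, not $(\sigma-r)^{-2\alpha-1/2}$, and the constraint you derive, $\alpha<1/4$, is unnecessarily strong; $\alpha<1/2$ suffices. This error is \emph{conservative} and by itself would not invalidate the conclusion, but it signals a misreading of the regularity that $\|A^{1/2}Q^{1/2}\|_{\HS}<\infty$ actually buys you — and that misreading feeds into the second, more serious problem.

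Second, and this is the genuine gap: your semigroup-error bound is written as $Ck^{s/2}\int_0^t(t-\sigma)^{\alpha-1-r/2}\|Y_A(\sigma)\|\,\dd\sigma$, using the $H$-norm of $Y_A$, but estimate \eqref{eq:semierr2} reads $\|(E(t)-\Ek(t)\Rk)x\|\le Ck^{s/2}t^{-r/2}\|x\|_{s-r}$ and requires the $\dot{H}^{s-r}$-norm of the argument. To obtain the stated rate $k^p$ you need $s=1$, hence $s-r$ strictly positive, and then the step-3 bound $\sup_\sigma\EE\|Y_A(\sigma)\|^2\le C$ in $H$ is not what is needed; you must control $\EE\sup_\sigma\|Y_A(\sigma)\|_{s-r}^{2p}$. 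The only way to make $s=r$ and use the $H$-norm is to take $s=r<2\alpha<1$, which gives only $k^{sp}$ with $s<1$ and falls short of the claim. The fix is to carry out the computation you attempted in step~3 for the $\dot{H}^{\beta}$-norm: $\EE\|A^{\beta/2}Y_A(\sigma)\|^2 = \int_0^\sigma(\sigma-r)^{-2\alpha}\|A^{\beta/2}E(\sigma-r)Q^{1/2}\|_{\HS}^2\,\dd r$, and $\|A^{\beta/2}E(\sigma-r)Q^{1/2}\|_{\HS}\le\|A^{(\beta-1)/2}E(\sigma-r)\|\,\HSAQ\le C(\sigma-r)^{-(\beta-1)/2}\HSAQ$ for $\beta\ge1$ by \eqref{eq:anal0}, giving finiteness precisely when $\beta<2-2\alpha$. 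With, say, $\alpha\in(0,1/2)$ one may take $s=1$ and any $r\in(0,2\alpha)$, so $s-r\in(1-2\alpha,1)\subset(0,2-2\alpha)$, and the outer kernel $(t-\sigma)^{\alpha-1-r/2}$ is integrable since $r<2\alpha$. Once these $\dot{H}^{s-r}$-moment bounds are in place (and lifted from second to $2p$-th moments via \eqref{eq:es1a}, then to a supremum in $\sigma$ by the usual continuity/Kolmogorov argument of \cite[Chapter~5]{DPZ}), the rest of your outline goes through.
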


Next we prove that $\Xtk$ converges uniformly strongly to $X$ but with
no specific rate given. The proof is in the same spirit as the proof
of \cite[Theorem 5.4]{KLM} (see also \cite{MR3273327}).
\begin{lem}\label{lem:XaX}
Let  $\Xtk$ be the solution of \eqref{eq:pspde} and $X$ of
\eqref{eq:sac}. If Assumption~\ref{ass:q} is satisfied with $q=4$, then
\begin{equation*}
\lim_{k\rightarrow 0}\EE\sup_{0\leq t\leq T}\|X(t)-\Xtk(t)\|^2=0.
\end{equation*}
\end{lem}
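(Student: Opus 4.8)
The plan is to compare $\Xtk$ with the true (mild/variational) solution $X$ of \eqref{eq:sac} directly, using the mild forms \eqref{eq:mildsac} and \eqref{eq:mildpert}, and to argue convergence without a rate by a combination of the deterministic semigroup estimates in Lemma~\ref{thm:semierr}, the stochastic convolution estimate in Lemma~\ref{lem:converrsup}, the moment bounds in Lemma~\ref{lem:tildebnd} and Corollary~\ref{cor:intF}, and a Gronwall argument. Writing $\varepsilon_k(t)=X(t)-\Xtk(t)$, I would split it using the mild formulas as
\begin{align*}
\varepsilon_k(t)&=\big(E(t)-\Ek(t)\big)X_0
-\int_0^t\!\big(E(t-s)f(X(s))-\Ek(t-s)\Fk(\Xtk(s))\big)\,\dd s\\
&\quad+\big(W_A(t)-W_{\Ak}(t)\big).
\end{align*}
The first term tends to $0$ by \eqref{eq:semierr1} (using $\EE\|X_0\|_1^2<\infty$, so $X_0\in\dH^1$ a.s.\ and the bound is $\le Ck^{1/2}\|X_0\|_1$), and the last term tends to $0$ in $L_2(\Omega;C([0,T];H))$ by Lemma~\ref{lem:converrsup}. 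So the work is in the drift integral.

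For the drift term I would insert and subtract intermediate quantities so as to separate three effects: (i) replacing $E$ by $\Ek\Rk$ (or $\Ek$), controlled by Lemma~\ref{thm:semierr}; (ii) replacing $f(X(s))$ by $\Fk(X(s))$, i.e.\ the difference between $f$ and its smoothed/Yosida-type approximation evaluated at the same point; and (iii) replacing $\Fk(X(s))$ by $\Fk(\Xtk(s))$, which is the genuine ``error propagation'' term. For (iii), since $\Fk$ is globally Lipschitz (Lemma~\ref{lem:fprop}\,\eqref{it:oneside}) — though with constant $1/k$, which is why no rate is obtained — one cannot use the Lipschitz bound crudely; instead I would use the one-sided Lipschitz/dissipativity of $\Fk$ together with the analyticity smoothing $\|\Ak^{\alpha}\Ek(t-s)\|\le C(t-s)^{-\alpha}$ to get a bound of the form $\int_0^t (t-s)^{-1/2}\|\varepsilon_k(s)\|\,\dd s$ after passing through $A^{-1/2}$ and using the local Lipschitz estimate \eqref{eq:local} on $f$ (hence on $\fk$, via $A^{-1/2}$) combined with the moment bounds on $\|X\|_1$ and $\|\Xtk\|_1$ from Theorem~\ref{thm:existenceandregularityofX} and Lemma~\ref{lem:tildebnd}. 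For (ii), I would write $f-\Fk$ using $\Fk(x)=\Rk f_k(\Rk x)$ and \eqref{eq:IJF}/\eqref{eq:yosidacomp}, bound $\|A^{-1/2}(f(x)-\fk(\Rk x))\|$ in terms of $\|x-\Jk\Rk x\|$ and the local Lipschitz constant, and observe that $\|x-\Jk\Rk x\|=\|\tfrac k2 A\Rk x + k\fk(\Rk x)\|\to 0$ suitably when $x$ has enough regularity; here the moment bound $\EE\int_0^T\|\fk(\Rk\Xtk(s))\|^{2p}\,\dd s\le C$ from Corollary~\ref{cor:intF} and the regularity $\EE\sup_t\|X(t)\|_1^p<\infty$ are what make this term a genuine $o(1)$ quantity (one may also need a density/uniform-integrability argument rather than a pointwise rate).

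Combining, I expect an estimate of the shape
\begin{equation*}
\EE\sup_{0\leq r\leq t}\|\varepsilon_k(r)\|^2\leq \rho(k)+C\int_0^t (t-s)^{-1/2}\,\EE\sup_{0\leq r\leq s}\|\varepsilon_k(r)\|^2\,\dd s,
\end{equation*}
where $\rho(k)\to 0$ collects the contributions of (i), (ii), the initial-data term and the stochastic-convolution term, after using Hölder and the moment bounds to absorb the polynomial factors in $\|X\|_1,\|\Xtk\|_1$; then the generalised Gronwall Lemma~\ref{lem:gronwallCont} (with $\alpha=1/2$) finishes the argument. The main obstacle is term (iii): because the only uniform-in-$k$ control on $\Fk$ is one-sided, the naive Lipschitz route fails, and one must carefully route the difference through $A^{-1/2}$ and the analytic smoothing, keeping the singular kernel $(t-s)^{-1/2}$ integrable while still being able to pull out the (random, but $L^p$-bounded) polynomial weights — this is the delicate bookkeeping step, and it is also the reason the lemma delivers only convergence and not a rate. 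The second most delicate point is justifying that term (ii) really is $o(1)$ uniformly enough in $s$ and $\omega$, for which the moment bounds of Corollary~\ref{cor:intF} (with a large enough $q$) together with dominated convergence are the right tools.
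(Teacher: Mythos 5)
Your decomposition of the drift into effects (i)--(iii) is essentially the paper's own splitting \eqref{eq:nonlinsplit} into $e_2$, $e_3$, $e_1$, and your idea of routing through $A^{-1/2}$, the local Lipschitz estimate \eqref{eq:local}, and analytic smoothing to produce the kernel $(t-s)^{-1/2}$ is exactly right. The gap is in the Gronwall step: you want an integral inequality for $\EE\sup_{r\le t}\|\varepsilon_k(r)\|^2$ and then Lemma~\ref{lem:gronwallCont}, but that inequality does not close. For your term (iii) (the paper's $e_1$), \eqref{eq:local} and \eqref{eq:anal0} give
\begin{equation*}
\Big\|\int_0^t e_1(s)\,\dd s\Big\|\le C\int_0^t (t-s)^{-1/2}\big(\|X(s)\|_1^2+\|\Xtk(s)\|_1^2+1\big)\,\|X(s)-\Xtk(s)\|\,\dd s,
\end{equation*}
where the polynomial weight and the error $\|X(s)-\Xtk(s)\|$ are \emph{correlated} random variables. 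If you freeze the weight by taking $\sup_s$, it is still random and cannot be pulled out of the expectation of the square; if you H\"older the product so the weight lands in some $L^p(\Omega)$, the error lands in $L^{p'}(\Omega)$ with $p'>2$, which is not the quantity your Gronwall iteration controls. Having $L^p$ moment bounds on the weight does not resolve this, because the factors are not independent; this is the point you describe as ``delicate bookkeeping'' but do not actually carry out, and there is no way to carry it out at the level of expectations.

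The paper's resolution is to localise in $\omega$ \emph{before} applying Gronwall. By Chebyshev there is a set $\Omega_{\epsilon,k}$ with $P(\Omega_{\epsilon,k}^c)<\epsilon$ on which $\sup_t\big(\|X(t)\|_1^2+\|X(t)\|^4\big)$, $\sup_t\big(\|\Xtk(t)\|_1^2+\|\Xtk(t)\|^4\big)$, and $\sup_t\|W_A(t)-W_{\Ak}(t)\|^2$ are all bounded by \emph{deterministic} constants of order $K_T\epsilon^{-1}$ and $K_Tk\epsilon^{-1}$. On $\Omega_{\epsilon,k}$ the polynomial weights are constants, the Gronwall argument is pathwise and closes via Lemma~\ref{lem:gronwallCont}, giving the pathwise rate \eqref{eq:asconv} with a constant $C_\epsilon$ that degrades as $\epsilon\to0$. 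On $\Omega_{\epsilon,k}^c$ the error is controlled crudely by Cauchy--Schwarz and the fourth-moment bounds --- which is precisely why the lemma hypothesises $q=4$. Sending $k\to 0$ for fixed $\epsilon$ and then $\epsilon\to 0$ yields the conclusion without a rate. Without some truncation-on-a-good-set device of this kind, the $L^p$ moment bounds and vague ``uniform integrability'' you invoke will not produce a closed Gronwall inequality, and your argument as written fails at that step.
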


\begin{proof}
  It follows from Theorem~\ref{thm:existenceandregularityofX},
  Lemma~\ref{lem:tildebnd}, and Lemma~\ref{lem:converrsup} that there
  is $K_T>1$ such that
  \begin{align*}
  \EE\sup_{0\leq s\leq T}\left(\|X(s)\|_1^2+\|X(s)\|^4\right)\leq 
  K_T,\quad \EE\sup_{0\leq s\leq 
    T}\left(\|\Xtk(s)\|_1^2+\|\Xtk(s)\|^4\right)\leq K_T    
  \end{align*}
and
  $\EE\sup_{0\leq t\leq T}\|W_{A}(t)-W_{\Ak}(t)\|^{2}\leq K_T  k$.
  Therefore, by Chebychev's inequality, for every $0<\epsilon<1$
  and $0<k<k_0$, there is $\Omega_{\epsilon,k}\subset \Omega$ with
  $P(\Omega_{\varepsilon,k}^c)<\epsilon$ such that uniformly for all
  $\omega\in \Omega_{\varepsilon,k}$ and $t\in [0,T]$ we have
\begin{align}
    \label{eq:wwk1}
    &\max(\|X(t)\|_1^2+\|X(t)\|^4,\|\Xtk(t)\|_1^2+\|\Xtk(t)\|^4)
    \leq K_T\epsilon^{-1} ,\\
\label{eq:wwk}
&\|W_{A}(t)-W_{\Ak}(t)\|^{2}\leq K_T k\epsilon^{-1} .
\end{align}
Let $0<\epsilon<1$, $0<k<k_0$ and $\omega\in \Omega_{\varepsilon,k}$
and without the loss of generality suppose that also
$\|X_0\|_1\le K_T\epsilon^{-1}$ on
$\Omega_{\varepsilon,k}$. Using the mild formulations
\eqref{eq:mildsac} and \eqref{eq:mildpert} we see that
\begin{equation}\label{eq:omegaerr}
\begin{aligned}
\|X(t)-\Xtk(t)\|
&\leq \|(E(t)-\Ek(t))X_0\|
\\ &\qquad
+
\Big\|\int_0^t\big(E(t-s)f(X(s))-\Ek(t-s) \Fk(\Xtk(s))\big)\,\dd s\Big\|
\\&\qquad+\|\WA(t)-\Wk(t)\|.
\end{aligned}
\end{equation}
We have, immediately from \eqref{eq:semierr1} that
\begin{equation}\label{eq:init}
\sup_{0\leq t\leq
  T}\|(E(t)-\Ek(t))X_0\|\leq k^{1/2}C\|X_0\|_1\le CK_T\epsilon^{-1}k^{1/2}
\end{equation}
and from \eqref{eq:wwk}
\begin{equation}\label{eq:WAWAK}
\sup_{0\leq t\leq T}\|\WA(t)-\Wk(t)\|\leq K_T^{1/2}\epsilon^{-1/2}k^{1/2}.
\end{equation}
To analyse the second term we split the integrand as
\begin{equation}\label{eq:nonlinsplit}
\begin{aligned}
&E(t-s)\big(f(X(s))-f(\Xtk(s))\big)+
\big(E(t-s)-\Ek(t-s)\Rk)f(\Xtk(s)\big)
\\ &\qquad
+\Ek(t-s)\Rk\big(f(\Xtk(s))-f_k(\Rk\Xtk(s))\big)
=:e_1+e_2+e_3.
\end{aligned}
\end{equation}
Applying \eqref{eq:anal0} and \eqref{eq:local} yields
\begin{equation*}
\begin{aligned}
\|e_1\|&=\|A^{1/2}E(t-s)A^{-1/2}(f(X(s))-f(\Xtk(s)))\|\\
&\leq C
(t-s)^{-1/2}\|A^{-1/2}(f(X(s))-f(\Xtk(s)))\|\\
&\leq
(t-s)^{-1/2}(\|X(s)\|_1^2+\| \Xtk(s)\|_1^2+1)\|X(s)-\Xtk(s)\|,
\end{aligned}
\end{equation*}
so that
\begin{equation}\label{eq:e1}
\begin{aligned}
\Big\|\int_0^te_1\,\dd s\Big\|
&\leq C\sup_{0\leq s\leq T}(\|X(s)\|_1^2+\|\Xtk(s)\|_1^2+1)
\int_0^t
(t-s)^{-1/2} \|X(s)-\Xtk(s)\|\,\dd s
\\ & 
\leq C\big(1+2K_T\epsilon^{-1}\big) \int_0^t
(t-s)^{-1/2} \|X(s)-\Xtk(s)\|\,\dd s.
\end{aligned}
\end{equation}
By Lemma~\ref{thm:semierr} we have that
\begin{equation*}
\|e_2\|\leq C (t-s)^{-1/2}\|f(\Xtk(s))\|k^{1/2},
\end{equation*}
and from \eqref{eq:wwk1} it follows that
\begin{equation*}
\|f(\Xtk(s))\|\leq C\big( \|\Xtk(s)\|_1^3+1\big)
\leq C\big(1+K_T^{3/2}\epsilon^{-3/2}\big),\quad s\in [0,T].
\end{equation*}
Thus,
\begin{equation}\label{eq:e2}
\sup_{0\leq t\leq T}\Big\|\int_0^te_2(s)\,\dd s\Big\|
\leq C(1+K_T^{3/2}\epsilon^{-3/2})k^{1/2}.
\end{equation}
We continue with the observation that
\begin{equation*}
\begin{aligned}
e_3&=(A\Mk)^{1/2}\Rk\Ek(t-s)(A\Mk)^{-1/2}(f(\Xtk(s))-
f(\Jk\Rk\Xtk(s)))\\
&=\Ak^{1/2}\Ek(t-s)(A\Mk)^{-1/2}(f(\Rk\Xtk(s))-
f(\Xtk(s))).
\end{aligned}
\end{equation*}
Therefore, by \eqref{eq:anal0}, \eqref{eq:local}, and \eqref{eq:yosidacomp},
\begin{equation*}
\begin{aligned}
\|e_3\| &\leq (t-s)^{-1/2}C\|(A\Mk)^{-1/2}(f(\Rk\Xtk(s))-
f(\Xtk(s))) \|\\
& \leq (t-s) ^{-1/2}C\|A^{-1/2}(f(\Rk\Xtk(s))-
f(\Xtk(s))) \|\\
& \leq (t-s)^{-1/2} C (\|\Xtk(s)\|_1^2+1)\|\Xtk(s)-
\Jk\Rk\Xtk(s) \|\\
& \leq (t-s)^{-1/2} C (\|\Xtk(s)\|_1^2+1)k(\|f(\Jk\Rk\Xtk(s))\|+\|A\Rk\Xtk(s)\|).
\end{aligned}
\end{equation*}
As $\|A\Rk\Xtk(s)\|\leq C \ts^{-1/2}\|\Xtk(s)\|_1$ and
$\|f(\Jk\Rk\Xtk(s))\|\leq C(\|\Xtk(s)\|_1^3+1)$, we get in the same
way as above that
\begin{equation}\label{eq:e3}
\sup_{0\leq t\leq T}\Big\|\int_0^te_3(s)\,\dd s\Big\|
\leq C(1+K_T\epsilon^{-1})K_T^{1/2}\epsilon^{-1/2}C(1+K_T^{3/2}\epsilon^{-3/2}) k^{1/2}.
\end{equation}
Summarising the above findings, we have
\begin{align*}
\|X(t)-\Xtk(t)\|
\leq
C\big(1+K_T^3\epsilon^{-3}\big)k^{1/2}
+C\big(1+2K_T\epsilon^{-1}\big) \int_0^t(t-s)^{-1/2}\|X(s)-\Xtk(s)\|\,\dd s.
\end{align*}
It follows from Gronwall's lemma, Lemma~\ref{lem:gronwallCont}, that
there is  $C_\epsilon=C(T,K_T\epsilon^{-1})$ (growing rapidly as
$\epsilon\to 0$) such that uniformly on $\Omega_{\epsilon,k}$,
\begin{equation}\label{eq:asconv}
\sup_{0\leq t\leq T}\|X(t)-\Xtk(t)\|\leq C_\epsilon k^{1/2}.
\end{equation}
Despite the large constant in \eqref{eq:asconv}, it is now possible to
prove strong convergence of $\Xtk$ to $X$ but with no rate. Indeed,
\begin{equation*}
\begin{aligned}
&\EE\sup_{0\leq t\leq T}\|X(t)-\Xtk(t)\|^2\leq \int_{\Omega_{\epsilon,k}}\sup_{0\leq t\leq T}\|X(t)-\Xtk(t)\|^2\,\dd P\\
&\qquad\quad +2\int_{\Omega_{\epsilon,k}^c}\sup_{0\leq t\leq T}\Big(\|X(t)\|^2+\|\Xtk(t)\|^2\Big)\,\dd P\\
&\qquad\leq C_{\epsilon}^2k+4 \epsilon^{1/2}\Big(\int_{\Omega_{\epsilon,k}^c}\sup_{0\leq t\leq T}\Big(\|X(t)\|^4+\|\Xtk(t)\|^4\Big)\,\dd P\Big)^{1/2}\\
&\qquad\leq C_{\epsilon}^2k+4 \epsilon^{1/2}\Big(\EE\sup_{0\leq t\leq T}\Big(\|X(t)\|^4+\|\Xtk(t)\|^4\Big)\Big)^{1/2}
\\ &\qquad\leq C_{\epsilon}^2k+8 \epsilon^{1/2}K_T^{1/2}.
\end{aligned}
\end{equation*}
Let $\delta>0$ and choose $0<\epsilon<1$ such that
$8 \epsilon^{1/2}K_T^{1/2}<\delta/2$. Then, for
$k<\tfrac12 C_{\epsilon}^{-2}\delta$, we have that
$
\EE\sup_{0\leq t\leq T}\|X(t)-\Xtk(t)\|^2<\delta
$
and the proof is complete.
\end{proof}

Now, we are ready to prove the error estimate. The proof of the
fact that $\{\Xtk\}_{k>0}$ is a Cauchy sequence benefits from
techniques in \cite[Chapter 4]{MR2111320}.

\begin{thm}\label{thm:tildeerror}
  Let $\Xtk$ be the solution of \eqref{eq:pspde} and $X$ of
  \eqref{eq:sac}. If Assumption~\ref{ass:q} is satisfied with $q=6$,
  then there is $C=C(\ts_0, T,\HSAQ,\EE\|X_0\|^6_1)>0$ such that
\begin{equation}
\EE\sup_{0\leq t\leq T}\|\Xtk(t)-X(t)\|^2\leq C\ts.
\end{equation}
\end{thm}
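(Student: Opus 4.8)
The plan is to deduce \eqref{eq:tildeerror-like} from the fact that the family $\{\Xtk\}_{\ts>0}$ is Cauchy in $L^2(\Omega;C([0,T];H))$ with an explicit rate. Concretely, for two step sizes $\tsa\le\tsb\le\ts_0$ I would prove
\begin{equation*}
\EE\sup_{0\le t\le T}\|\Xta(t)-\Xtb(t)\|^2\le C\tsb ,
\end{equation*}
with $C$ depending only on $T$, $\|A^{1/2}Q^{1/2}\|_{\HS}$, $\EE\|X_0\|_1^6$ and $\ts_0$. By Lemma~\ref{lem:XaX}, $\Xta\to X$ in $L^2(\Omega;C([0,T];H))$ as $\tsa\to0$; letting $\tsa\to0$ in the Cauchy estimate, using this convergence together with Fatou's lemma, and identifying the limit via uniqueness of the variational solution (Theorem~\ref{thm:existenceandregularityofX}), one obtains $\EE\sup_{0\le t\le T}\|X(t)-\Xtk(t)\|^2\le C\ts$, which is the claim.

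For the Cauchy estimate I would work with the \emph{strong} forms of \eqref{eq:pspde} for $\tsa$ and $\tsb$ — legitimate since $\Aa,\Ab$ are bounded, $\Fa,\Fb$ globally Lipschitz, and $\Ra Q^{1/2},\Rb Q^{1/2}$ Hilbert--Schmidt — and apply It\^o's formula to $\tfrac12\|\Xta(t)-\Xtb(t)\|^2$, which is licit because $\tfrac12\|\cdot\|^2\in C^2(H)$ and $\Xta(0)=\Xtb(0)=X_0$. Writing $\Aa\Xta-\Ab\Xtb=\Aa(\Xta-\Xtb)+(\Aa-\Ab)\Xtb$, this gives
\begin{equation*}
\begin{aligned}
\tfrac12\|\Xta(t)-\Xtb(t)\|^2&+\int_0^t\|\Aa^{1/2}(\Xta-\Xtb)\|^2\,\dd s
=-\int_0^t\langle\Xta-\Xtb,(\Aa-\Ab)\Xtb\rangle\,\dd s\\
&-\int_0^t\langle\Fa(\Xta)-\Fb(\Xtb),\Xta-\Xtb\rangle\,\dd s
+\int_0^t\langle\Xta-\Xtb,(\Ra-\Rb)\,\dd W\rangle\\
&+\tfrac12\int_0^t\|(\Ra-\Rb)Q^{1/2}\|_{\HS}^2\,\dd s .
\end{aligned}
\end{equation*}
The last line is harmless: by \eqref{eq:Rkdiff}, \eqref{eq:Rkdiffnorm} (with the roles of $\tsa,\tsb$ interchanged, $s=1$) and \eqref{eq:hs}--\eqref{eq:tr} one gets $\|(\Ra-\Rb)Q^{1/2}\|_{\HS}^2\le C\tsb\|A^{1/2}Q^{1/2}\|_{\HS}^2$ and $\|Q^{1/2}(\Ra-\Rb)(\Xta-\Xtb)\|\le C\tsb^{1/2}\|\Xta-\Xtb\|$, so after the Burkholder--Davis--Gundy inequality \eqref{eq:rbdg} and Young's inequality the martingale term contributes only $C\tsb$ plus a quantity absorbed by Gronwall, while the trace term is $\le C\tsb$.

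The nonlinear term is handled via the identity \eqref{eq:FaFb} with $x=\Xta$, $y=\Xtb$. Its first summand $\langle\fa(\Ra\Xta)-\fb(\Rb\Xtb),\Ra\Xta-\Rb\Xtb\rangle$ I would rewrite in terms of $z_1=\Ja(\Ra\Xta)$, $z_2=\Jb(\Rb\Xtb)$, so that $\Ra\Xta=z_1+\tsa f(z_1)$ and $\Rb\Xtb=z_2+\tsb f(z_2)$ by \eqref{eq:IJF}, and bound it below using \eqref{eq:onesided1}, picking up $-\beta^2\|z_1-z_2\|^2$ plus a remainder controlled by $\tsb(\|f(z_1)\|^2+\|f(z_2)\|^2)$; here $f(z_i)$ equals $\fa(\Ra\Xta)$, resp.\ $\fb(\Rb\Xtb)$, whose time integrals of the second moment are bounded by Corollary~\ref{cor:intF}, and $\|z_1-z_2\|$ is estimated in terms of $\|\Xta-\Xtb\|$, $\tsb^{1/2}\|\Xtb\|_1$ and $\tsb\|\fb(\Rb\Xtb)\|$ using the Lipschitz continuity and $\dH^1$ linear growth of $J_\ts$ (Lemma~\ref{lem:fprop}, Items~\eqref{it:Jklip} and \eqref{it:oneside}) and \eqref{eq:Rkdiffnorm}. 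The remaining two summands of \eqref{eq:FaFb} involve $(\Ra-\Rb)\Xta$ and $(\Ra-\Rb)\Xtb$; splitting the cubic and linear parts of $f$ and combining \eqref{eq:Rkdiff} with \eqref{eq:akb}--\eqref{eq:arkg}, one bounds them by $C\tsb$ using the uniform $\dH^1$-moment bounds for $\Xtk$ and $\Jk\Rk\Xtk$ (Lemma~\ref{lem:tildebnd} and Lemma~\ref{lem:fprop}\eqref{it:Jklip}) together with Corollary~\ref{cor:intF}.

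The genuine obstacle is the operator-difference term $-\int_0^t\langle\Xta-\Xtb,(\Aa-\Ab)\Xtb\rangle\,\dd s$. Because $\lambda\mapsto\lambda\,\tfrac{1+\lambda\ts/4}{(1+\lambda\ts/2)^2}$ is non-increasing in $\ts$, the operator $\Aa-\Ab$ is self-adjoint and positive semidefinite; the plan is to write the term as $-\langle(\Aa-\Ab)^{1/2}(\Xta-\Xtb),(\Aa-\Ab)^{1/2}\Xtb\rangle$, absorb the $(\Xta-\Xtb)$-part into the good term $\int_0^t\|\Aa^{1/2}(\Xta-\Xtb)\|^2\,\dd s$ on the left (using $\langle(\Aa-\Ab)v,v\rangle\le\|\Aa^{1/2}v\|^2$), and control the remaining $\int_0^t\langle(\Aa-\Ab)\Xtb,\Xtb\rangle\,\dd s$ by combining the elementary bound $\|(\Aa-\Ab)A^{-1}\|\le1$ with the interpolated resolvent estimates derived from \eqref{eq:akb}--\eqref{eq:arkg} and the higher moment/integrability bounds for $\Xtk$ — this is the most delicate point, and it is where the quantitative smallness in $\tsb$ is extracted. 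Once every term has been bounded by $C\tsb$ plus a constant times $\int_0^t\EE\sup_{0\le r\le s}\|\Xta(r)-\Xtb(r)\|^2\,\dd s$, taking the supremum over $t$, then expectations, and applying the generalised Gronwall lemma (Lemma~\ref{lem:gronwallCont} with $\alpha=1$) closes the estimate. The hard part throughout is that the differences of the three smoothing objects ($\Aa$ vs.\ $\Ab$, $\Ra$ vs.\ $\Rb$, $\Fa$ vs.\ $\Fb$) must be controlled at the rate $\tsb^{1/2}$ while \emph{all} Gronwall constants remain deterministic; this is exactly what forces the use of the one-sided Lipschitz and dissipativity structure of Lemma~\ref{lem:fprop} in place of the blow-up Lipschitz constant $1/\ts$ of $\Fk$.
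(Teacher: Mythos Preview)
Your overall strategy is the same as the paper's: prove that $\{\Xtk\}_{k>0}$ is Cauchy in $L^2(\Omega;C([0,T];H))$ with rate $O(\max(\tsa,\tsb))$, then pass to the limit using Lemma~\ref{lem:XaX}. (Fatou's lemma is not needed; $L^2$-convergence suffices, and the limit is already identified by that lemma.) Your treatment of the trace term, the martingale term, and the nonlinear term via \eqref{eq:FaFb} and \eqref{eq:IJF} is essentially the paper's argument, though the paper packages the nonlinear part more cleanly by introducing the nonnegative quantity $a_2^+=\langle f(z_1)-f(z_2),z_1-z_2\rangle+\beta^2\|z_1-z_2\|^2$ and estimating $I_2^+-I_2$; your ``splitting the cubic and linear parts of $f$'' for the cross terms is unnecessary --- Cauchy--Schwarz together with \eqref{eq:Rkdiff} already gives the $O(\tsb)$ bound.

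The genuine difference is the operator term. The paper does \emph{not} split $\Aa\Xta-\Ab\Xtb$ asymmetrically; instead it adds the symmetric nonnegative term
\[
I_1^+=\int_0^t\|\Ma\Ra^2\Xta-\Mb\Rb^2\Xtb\|_1^2\,\dd s
=\int_0^t\langle A^{-1}(\Aa\Xta-\Ab\Xtb),\Aa\Xta-\Ab\Xtb\rangle\,\dd s\ge0
\]
and bounds $I_1^+-I_1$ using $I-\Mk\Rk^2=\tfrac{\ts}{4}A(3+\ts A)\Rk^2$, landing on $C\tsb\int(\|\Aa\Xta\|^2+\|\Ab\Xtb\|^2)\,\dd s$, which is controlled by the second bound in Lemma~\ref{lem:tildebnd} for \emph{both} solutions. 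Your asymmetric route via the positive semidefiniteness of $\Aa-\Ab$ is also valid and slightly more economical (only $\Xtb$ is needed), but the tools you name --- $\|(\Aa-\Ab)A^{-1}\|\le1$ and \eqref{eq:akb}--\eqref{eq:arkg} --- do not by themselves extract the factor $\tsb$. What you actually need is the spectral inequality $\langle(\Aa-\Ab)v,v\rangle\le\tsb\,\|\Ab^{1/2}v\|_1^2$, which follows from the elementary computation $\frac{1-g(x)}{xg(x)}=\frac{3+x}{4+x}\le1$ for $g(x)=\frac{1+x/4}{(1+x/2)^2}$ applied at $x=\lambda\tsb$; then $\EE\int_0^T\langle(\Aa-\Ab)\Xtb,\Xtb\rangle\,\dd s\le\tsb\,\EE\int_0^T\|\Ab^{1/2}\Xtb\|_1^2\,\dd s\le C\tsb$ by Lemma~\ref{lem:tildebnd}. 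Once you make this step precise your argument closes exactly as the paper's does.
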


We suppress the dependency of the constants $C$ on the data in
order to, we hope, increase the readability of the following proof.

\begin{proof}
We start by proving that $\{\Xtk\}_{k>0}$ is Cauchy
in $L_2(\Omega,\cF,P;C([0,T],H))$. To this
aim we pick two time-steps $\tsa$ and $\tsb$ and assume, without loss
of generality, that $\tsb\leq \tsa\leq \ts_0$. The corresponding solutions of
\eqref{eq:pspde} are $\Xta$ and $\Xtb$. We note that $\Xta(t)-\Xtb(t)$
solves the equation
\begin{equation*}
\begin{aligned}
&\dd \big(\Xta-\Xtb\big)
+\big(\Aa\Xta-\Ab\Xtb\big)\,\dd t
+\big(\Fa(\Xta)-\Fb(\Xtb)\big)\,\dd t
=(\Ra-\Rb)\,\dd W,\quad t\in (0,T], 
\\ &
\Xta(0)-\Xtb(0)=0.
\end{aligned}
\end{equation*}
Applying It\^o's formula to $\tfrac12\|\Xta(t)-\Xtb(t)\|^2$ thus gives
\begin{equation*}
\begin{aligned}
\tfrac12\|\Xta(t)-\Xtb(t)\|^2
&
=-\int^t_0 \langle \Xta(s)-\Xtb(s),\Aa\Xta(s)-\Ab\Xtb(s)\rangle\,\dd s
\\ &\quad
-\int_0^t\langle \Xta(s)-\Xtb(s),\Fa(\Xta(s))-\Fb(\Xtb(s))
\rangle\,\dd s
\\ &\quad
+\int_0^t\langle \Xta(s)-\Xtb(s),\big(\Ra-\Rb\big)\, \dd W(s)\rangle
\\ &\quad
+\tfrac12\int^t_0
\Tr\big(\big(\Ra-\Rb\big)Q^{1/2}
(\big(\Ra-\Rb\big)Q^{1/2})^*\big)\,\dd s
\\ &=:-I_1-I_2+I_3+I_4.
\end{aligned}
\end{equation*}
For $I_4$ we note that, by \eqref{eq:Rkdiff} and \eqref{eq:Rkdiffnorm},
\begin{equation*}
\begin{aligned}
&\Tr(\big(\Ra-\Rb\big)Q^{1/2}(\big(\Ra-\Rb\big)Q^{1/2})^*)
=\|\big(\Ra-\Rb\big)Q^{1/2}\|_{\HS}^2
\\ & \qquad
\leq C\tsa^2\|A\Ra\Rb Q^{1/2}\|_{\HS}^2
\leq C \tsa \|A^{1/2}\Rb Q^{1/2}\|_{\HS}^2.
\end{aligned}
\end{equation*}
Thus,
\begin{equation}\label{eq:I4}
\EE\sup_{0\leq t\leq T}|I_4(t)|\leq \tsa C T\|A^{1/2} Q^{1/2}\|_{\HS}^2.
\end{equation}
For $I_3$ we have, using again \eqref{eq:Rkdiff}, that
\begin{equation*}
\langle \Xta(s)-\Xtb(s),\big(\Ra-\Rb\big)  \,\dd W(s)\rangle
=\tfrac12(\tsa-\tsb)\langle \Xta(s)-\Xtb(s),A\Ra\Rb\, \dd W(s)\rangle.
\end{equation*}
Hence, using Cauchy's inequality, the Burkholder--Davies--Gundy
inequality \eqref{eq:rbdg}, H\"older's inequality, \eqref{eq:AhRk},
and Lemma~\ref{lem:qarkbnd}, we have
\begin{equation}\label{eq:I3}
\begin{aligned}
&
\EE\sup_{0\leq t\leq T}|I_3(t)|
\leq C\tsa \EE\sup_{0\leq t\leq T}\Big| \int_0^t\langle
\Xta(s)-\Xtb(s),A\Ra\Rb \,\dd W(s)\rangle\Big|\\
&\qquad \leq C\tsa\Big(1+ \EE\sup_{0\leq t\leq T}\Big| \int_0^t\langle
\Xta(s)-\Xtb(s),A\Ra\Rb \,\dd W(s)\rangle\Big|^2\Big)\\
& \qquad \leq C\tsa\Big(1+\EE\int_0^T \|Q^{1/2}A^{1/2}\Rb A^{1/2}\Ra
(\Xta(s)-\Xtb(s))\|^2\,\dd s \Big)\\
& \qquad \leq C\tsa\Big(1+\EE\int_0^T\|Q^{1/2}A^{1/2}\Rb\|^2\| A^{1/2}\Ra
(\Xta(s)-\Xtb(s))\|^2\,\dd s \Big)\\
& \qquad \leq C\tsa \Big(1+\frac 1{\tsa}\EE\int_0^T \|Q^{1/2}A^{1/2}\Rb\|^2\|\Xta(s)-\Xtb(s)\|^2\,\dd s \Big)\\
&\qquad \leq C\Big(\tsa +\int_0^T\EE\sup_{0\leq t\leq  s}\|\Xta(t)-\Xtb(t)\|^2\,\dd s\Big).
\end{aligned}
\end{equation}

To squeeze out smallness of  $I_2$ we first note that by
\eqref{eq:onesided1},
\begin{equation*}
\begin{aligned}
a_2^+&:=\langle f(\Ja \Ra \Xta(s))-f(\Jb \Rb \Xtb(s)),\Ja \Ra \Xta(s)-\Jb
\Rb \Xtb(s)\rangle\\
&\quad +\beta^2\| \Ja \Ra \Xta(s)-\Jb
\Rb \Xtb(s)\|^2\geq 0.
\end{aligned}
\end{equation*}
From \eqref{eq:IJF} it follows that
\begin{equation*} 
\begin{aligned}
\| \Ja \Ra \Xta(s)-\Jb\Rb \Xtb(s)\|^2
&\leq
C\tsa^2\Big(\|\fa(\Ra\Xta(s))\|^2+\|\fb(\Rb\Xtb(s))\|^2\Big)
\\ & \quad+C\|\Ra \Xta(s)-\Rb\Xtb(s)\|^2,
\end{aligned}
\end{equation*}
and from \eqref{eq:Rkdiff} that
\begin{equation*} 
\begin{aligned}
\|\Ra \Xta(s)-\Rb\Xtb(s)\|&\leq \|\Ra \Xta(s)-\Ra\Xtb(s)\|+\|\Ra
\Xtb(s)-\Rb\Xtb(s)\|\\
&\leq C \Big(\|\Xta(s)-\Xtb(s)\|+\tsa\|A\Ra\Rb\Xtb(s)\|\Big).
\end{aligned}
\end{equation*}
Combining the last three inequalities, inserting $f_\alpha=f(J_\alpha\cdot)$, we find that
\begin{equation}\label{eq:apf}
\begin{aligned}
0\leq a^+_2&\leq \langle \fa(\Ra \Xta(s))-\fb( \Rb \Xtb(s)),\Ja \Ra \Xta(s)-\Jb
\Rb \Xtb(s)\rangle\\
&\quad +C \Big(\|\Xta(s)-\Xtb(s)\|^2+\tsa^2\big(\|\fa(\Ra\Xta(s))\|^2\\
&\qquad+\|\fb(\Rb\Xtb(s))\|^2+\|A\Ra\Rb\Xtb(s)\|^2\big)\Big).
\end{aligned}
\end{equation}
 We shall subtract $I_2$ from $I_2^+:=\int_0^ta_2^+\,\dd s$ but first
 we note, using  \eqref{eq:FaFb} and \eqref{eq:Rkdiffnorm} as well as H\"older's and
 Cauchy's inequalities, that
\begin{equation}\label{eq:I2bound}
\begin{aligned}
-I_2&\leq\int_0^t  \Big\{\langle
\fa(\Ra\Xta(s))-\fb(\Rb\Xtb(s)),\Ra\Xta(s)-\Rb\Xtb(s)\rangle\\
&\qquad+\tsa\Big(
\|\fa(\Ra\Xta(s))\|^2+\|\fb(\Rb\Xtb(s))\|^2 \\
&\qquad \qquad\qquad +\|A\Ra\Rb \Xta(s)\|^2
+\|A\Ra\Rb \Xtb(s)\|^2\Big)\Big\}\,\dd s.
\end{aligned}
\end{equation}
Thus, from \eqref{eq:apf} and \eqref{eq:I2bound}, we have
\begin{equation*}
\begin{aligned}
I_2^+-I_2&\leq \int_0^t\Big\{
\langle \fa(\Ra \Xta(s))-\fb(\Rb \Xtb(s)),\Ja \Ra \Xta(s)-\Jb
\Rb \Xtb(s)\rangle\\
&\quad
-\langle\fa(\Ra\Xta(s))-\fb(\Rb\Xtb(s)),\Ra\Xta(s)-\Rb\Xtb(s)\rangle\\
&\qquad + C\Big(\|\Xta(s)-\Xtb(s)\|^2 +\tsa\big(
\|\fa(\Ra\Xta(s))\|^2+\|\fb(\Rb\Xtb(s))\|^2 \\
&\quad \qquad+\|A\Ra\Rb \Xta(s)\|^2
+\|A\Ra\Rb \Xtb(s)\|^2\big)\Big) \Big\}\, \dd s.
\end{aligned}
\end{equation*}
Using \eqref{eq:IJF} again, it follows that
\begin{equation*}
\begin{aligned}
& \langle \fa(\Ra \Xta(s))-\fb(\Rb \Xtb(s)),\Ja \Ra \Xta(s)-\Jb
\Rb \Xtb(s)\rangle
\\
&\qquad
-\langle\fa(\Ra\Xta(s))-\fb(\Rb\Xtb(s)),\Ra\Xta(s)-\Rb\Xtb(s)\rangle\\
&\quad =-\langle \fa(\Ra \Xta(s))-\fb(\Rb \Xtb(s)),\tsa  f(\Ja \Ra
\Xta(s))-\tsb f(\Jb \Rb \Xtb(s))\rangle\\
&\quad \leq C\tsa\Big( \|\fa(\Ra \Xta(s))\|^2 + \|\fb(\Rb \Xtb(s))\|^2
\Big).
\end{aligned}
\end{equation*}
Inserting this into the previous inequality, taking the supremum in
time and then the expectation, after some trivial estimates we may
conclude that
\begin{equation}
\begin{aligned}\label{eq:I2}
\EE\sup_{0\leq t\leq T}|I_2^+-I_2|
&\leq C\Big(\int_0^T\EE
\sup_{0\leq s\leq t}\|\Xta(s)-\Xtb(s)\|^2\,\dd t
\\
&\quad +\tsa \int_0^T\EE\big( \|
\fa(\Ra\Xta(s))\|^2+\|\fb(\Rb\Xtb(s))\|^2 \\
&\qquad+\|A\Ra\Rb \Xta(s)\|^2
+\|A\Ra\Rb \Xtb(s)\|^2\big)\,\dd s\Big).
\end{aligned}
\end{equation}

The term $I_1$ can be dealt with in a similar fashion. Indeed, we have that
\begin{equation*}
\begin{aligned}
I_1^+&:=\int_0^t\langle
\Ma\Ra^2\Xta(s)-\Mb\Rb^2\Xtb(s), \Aa\Xta(s)-\Ab\Xtb(s) \rangle\,\dd s\\
&=\int_0^t\|\Ma\Ra^2\Xta(s)-\Mb\Rb^2\Xtb(s)\|_1^2\,\dd s\geq 0.
\end{aligned}
\end{equation*}
Also, note that
$
I-\Mk\Rk^2=\frac{\ts}{4}A(3+\ts A)\Rk^2.$
Thus, abbreviating $\tMk=(3+kA)$, we have
\begin{equation*}
\begin{aligned}
I_1^+-I_1&=\tfrac14\int_0^t\langle
A\big(\tsa\tMa\Ra^2\Xta(s)-\tsb\tMb\Rb^2
\Xtb(s)\big),\Aa\Xta(s)-\Ab\Xtb(s)\rangle\,\dd s\\
&\leq  C \int_0^t \big(\|\Aa\Xta(s)\|+\|\Ab\Xtb(s)\|\big)\big( \tsa\|A(\tMa\Ra^2\Xta(s)\|\\
&\quad+\tsb\|A\tMb\Rb^2\Xtb(s)\|\big)\,\dd s.
\end{aligned}
\end{equation*}
But $\|\tMk x\|\leq 4\|\Mk x\|$ and hence
\begin{equation}\label{eq:I1}
\EE \sup_{0\leq t\leq T}|I_1^+-I_1|\leq C \tsa\int_0^T\EE\Big( \|\Aa\Xta(s)\|^2+\|\Ab\Xtb(s)\|^2\Big)\,\dd s.
\end{equation}
Combining the results in \eqref{eq:I4}, \eqref{eq:I3}, \eqref{eq:I2},
and \eqref{eq:I1}, we thus conclude that
\begin{equation*}
\begin{aligned}
\EE\sup_{0\leq t\leq T}\tfrac12\|\Xta(t)-\Xtb(t)\|^2 &=\EE\sup_{0\leq
  t\leq T}\big(-I_1-I_2+I_3+I_4\big)\\
&\leq\EE\sup_{0\leq t\leq T}\big(
I_1^+-I_1+I_2^+-I_2+I_3+I_4\big)\\
&\leq C\Big\{\int_0^T \EE\sup_{0\leq t\leq
  s}\|\Xta(t)-\Xtb(t)\|^2\,\dd s\\
&\quad +\tsa\Big(1+\int_0^T
\|\fa(\Ra\Xta(s))\|^2+\|\fb(\Rb\Xtb(s))\|^2\\
&\quad+\|A\Ra\Rb \Xta(s)\|^2
+\|A\Ra\Rb \Xtb(s)\|^2\\
&\quad +\|\Aa\Xta(s)\|^2+\|\Ab\Xtb(s)\|^2\,\dd s\Big) \Big\}.
\end{aligned}
\end{equation*}
The second integral is bounded by Lemma~\ref{lem:tildebnd} and
Corollary~\ref{cor:intF} since we assume $\EE \|
X_0\|_1^6<\infty$.
Note that Lemma~\ref{lem:tildebnd} is applicable as, by
\eqref{eq:resol} and \eqref{eq:MkRk}, we have
$\|A\Ra\Rb x\|\le 2 \|\Aa x\|$ and $\|\Aa x\|\le \|\Aa^{1/2}x\|_1$
(similarly for the terms with $\Ab$). Thus, by Gronwall's lemma,
\begin{equation}\label{eq:dg}
\EE\sup_{0\leq t\leq T}\tfrac12\|\Xta(t)-\Xtb(t)\|^2 \leq C(T)\tsa.
\end{equation}
Therefore, it follows that there is $\Xt$ such that  $\Xtk\rightarrow
\Xt$ in $L_2(\Omega,\cF,P;C([0,T],H))$ as
$k\rightarrow 0$. But according to Lemma~\ref{lem:XaX} $\Xt$ must be
$X$. Thus the theorem follows from \eqref{eq:dg} by taking $\gamma\to 0$.
\end{proof}

\subsection{A bound for $e^2$}
We have the following result.

\begin{thm}\label{thm:splitsteperror}
If Assumption~\ref{ass:q} is satisfied with $q=18$ and $\EE \|X_0\|_2^2< \infty$,
then there is 
\begin{align*}
C=C(\ts_0,T,\HSAQ,\EE\|X_0\|^{18}_1,\EE \|X_0\|_2^2)>0  
\end{align*}
such that
\begin{equation*}
\EE\sup_{0\leq t\leq T}\|\Xtk(t)-\Xh(t)\|^2 \leq C\ts.
\end{equation*}

\end{thm}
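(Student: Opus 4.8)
The plan is to exploit that the stochastic convolutions cancel in $e^2:=\Xtk-\Xh$, so that $e^2$ solves a pathwise ODE in $H$ with zero initial value, and then to run an energy estimate in which the dissipativity of $\Ak$ and the one-sided Lipschitz property of $\Fk$ control the ``diagonal'' terms while the time-interpolation error $\Xh-\Xb$ is treated as a small forcing, using the results of Section~\ref{subsec:baux}. Subtracting the strong form of \eqref{eq:pspde} from the identity $\Xh(t)=X_0-\int_0^t\big(\Ak\Xb(s)+\Fk(\Xb(s))\big)\,\dd s+\int_0^t\Rk\,\dd W(s)$ gives
\begin{equation*}
e^2(t)=-\int_0^t\Ak\big(\Xtk(s)-\Xb(s)\big)\,\dd s-\int_0^t\big(\Fk(\Xtk(s))-\Fk(\Xb(s))\big)\,\dd s,\quad e^2(0)=0,
\end{equation*}
which is (a.s.) Lipschitz in $t$ with values in $H$, since $\Ak\in\mathcal B(H)$ and $\Fk$ is globally Lipschitz, so $\tfrac12\|e^2(t)\|^2$ is absolutely continuous. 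Differentiating, using $\Ak(\Xtk-\Xb)=\Ak e^2+\Ak(\Xh-\Xb)$, and adding and subtracting $\Fk(\Xh)$, gives for a.e.\ $t$
\begin{equation*}
\begin{aligned}
\tfrac{\dd}{\dd t}\tfrac12\|e^2\|^2
&=-\|\Ak^{1/2}e^2\|^2-\langle e^2,\Ak(\Xh-\Xb)\rangle \\
&\quad -\langle\Fk(\Xtk)-\Fk(\Xh),\Xtk-\Xh\rangle-\langle e^2,\Fk(\Xh)-\Fk(\Xb)\rangle .
\end{aligned}
\end{equation*}

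For the third term, Item~\eqref{it:oneside} of Lemma~\ref{lem:fprop} gives $-\langle\Fk(\Xtk)-\Fk(\Xh),\Xtk-\Xh\rangle\le\tfrac{\beta^2}{1-2\ts_0\beta^2}\|e^2\|^2$. For the first cross term, I would write $\Ak=\Ak^{1/2}\Ak^{1/2}$ and use Young's inequality, absorbing $\tfrac14\|\Ak^{1/2}e^2\|^2$ into the dissipative term; the remaining $\|\Ak^{1/2}(\Xh-\Xb)\|^2$ is $\le\|\Xh-\Xb\|_1^2$ by the elementary spectral bound $\|\Ak^{1/2}x\|\le\|x\|_1$ (immediate from $\Ak^{1/2}=A^{1/2}\Mk^{1/2}\Rk$ and $1+\tfrac\ts4\lambda\le(1+\tfrac\ts2\lambda)^2$). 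The delicate term is the fourth one: $\Fk=\Rk\fk(\Rk\cdot)$ is only $1/\ts$-Lipschitz, so a crude estimate loses a factor $\ts^{-1}$. Instead I would write
\begin{equation*}
\langle e^2,\Fk(\Xh)-\Fk(\Xb)\rangle=\langle A^{1/2}\Rk e^2,\,A^{-1/2}\big(\fk(\Rk\Xh)-\fk(\Rk\Xb)\big)\rangle ,
\end{equation*}
bound $\|A^{1/2}\Rk e^2\|\le\|\Ak^{1/2}e^2\|$ (again from $\Ak^{1/2}=A^{1/2}\Mk^{1/2}\Rk$ with $\|\Mk^{-1/2}\|\le1$) so that this factor too can be charged to the dissipation, and estimate the second factor by the local Lipschitz bound \eqref{eq:local} for $f$ together with \eqref{eq:Jklip}--\eqref{eq:Jklip1} and the contractivity \eqref{eq:resol} of $\Rk$, obtaining $\|A^{-1/2}(\fk(\Rk\Xh)-\fk(\Rk\Xb))\|\le C(\ts_0)(\|\Xh\|_1^2+\|\Xb\|_1^2+1)\|\Xh-\Xb\|$. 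Young's inequality then produces another $\tfrac14\|\Ak^{1/2}e^2\|^2$ (absorbed) plus $C(\ts_0)(\|\Xh\|_1^2+\|\Xb\|_1^2+1)^2\|\Xh-\Xb\|^2$.

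Assembling these bounds and discarding the remaining $\tfrac12\|\Ak^{1/2}e^2\|^2\ge0$, I would integrate in $t$, take the supremum over $[0,t]$ and then the expectation, arriving at
\begin{equation*}
\begin{aligned}
\EE\sup_{0\le r\le t}\|e^2(r)\|^2
&\le C\int_0^t\EE\sup_{0\le r\le s}\|e^2(r)\|^2\,\dd s+C\,\EE\!\int_0^T\|\Xh-\Xb\|_1^2\,\dd s \\
&\quad +C\,\EE\!\int_0^T\big(\|\Xh\|_1^2+\|\Xb\|_1^2+1\big)^2\|\Xh-\Xb\|^2\,\dd s .
\end{aligned}
\end{equation*}
The first forcing integral is $\le C\ts$ by Proposition~\ref{prop:Eoneerr} (which uses $\EE\|X_0\|_2^2<\infty$). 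For the second I would apply H\"older's inequality with exponents $\tfrac32$ and $3$, bounding it (up to a constant) by $\big(\EE\int_0^T(\|\Xh\|_1^6+\|\Xb\|_1^6+1)\,\dd s\big)^{2/3}\big(\EE\int_0^T\|\Xh-\Xb\|^6\,\dd s\big)^{1/3}$; the first factor is finite by Corollary~\ref{cor:jkrkxhat} with $p=3$ (this is exactly where $q=18$ is needed) and Lemma~\ref{lem:discretebounds}, and the second is $\le(CT\ts^3)^{1/3}=C\ts$ by Proposition~\ref{prop:supEerr} with $p=3$. Gronwall's lemma (Lemma~\ref{lem:gronwallCont} with $\alpha=1$) then gives $\EE\sup_{0\le t\le T}\|e^2(t)\|^2\le C\ts$. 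I expect the main obstacle to be the fourth term $\langle e^2,\Fk(\Xh)-\Fk(\Xb)\rangle$: because $\Fk$ is only $1/\ts$-Lipschitz one must route the estimate through the smoothing operators $A^{\pm1/2}$ and absorb the resulting $\|\Ak^{1/2}e^2\|^2$ contribution into the dissipation, after which the polynomial growth factor $(\|\Xh\|_1^2+\|\Xb\|_1^2+1)$ it drags in forces a careful moment budget — via the $(\tfrac32,3)$-H\"older split against the sixth power of $\Xh-\Xb$ — so that the argument closes using only the available moment order $q=18$.
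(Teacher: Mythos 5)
Your proposal is correct and follows essentially the same route as the paper's proof: the same energy identity for $e^2$ with the stochastic convolutions cancelled, the same splitting $\Xtk-\Xb=e^2+(\Xh-\Xb)$, the same routing of the $\Fk$-difference through $A^{\pm1/2}$ so that $\|\Ak^{1/2}e^2\|^2$ absorbs the cross terms via Young, the same $(3/2,3)$-H\"older split against Propositions~\ref{prop:supEerr}--\ref{prop:Eoneerr}, Lemma~\ref{lem:discretebounds}, and Corollary~\ref{cor:jkrkxhat}, and Gronwall to close. The paper's small notational differences (working with $\Mk^{1/2}\Rk$ explicitly, and applying the $\Jk$/$\Rk$ growth bounds slightly later) are equivalent to your use of $\Ak^{1/2}=A^{1/2}\Mk^{1/2}\Rk$ and the spectral bound $1+\tfrac\ts4\lambda\le(1+\tfrac\ts2\lambda)^2$.
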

\begin{proof}
First note that
\begin{equation*}
\dd
(\Xtk(t)-\Xh(t))
=-\big(\Ak(\Xtk(t)-\Xb(t))+(\Fk(\Xtk(t))-\Fk(\Xb(t)))\big)\,\dd t.
\end{equation*}
Thus, since $\Xtk(0)=\Xb(0)$, it follows that
\begin{equation}\label{eq:ssito}
\begin{aligned}
\tfrac12\|\Xtk(t)-\Xh(t)\|^2&=-\int_0^t\langle \Xtk(s)-\Xh(s),
\Ak\big(\Xtk(s)-\Xb(s)\big)\rangle\,\dd s\\
&\quad-\int_0^t\langle \Xtk(s)-\Xh(s),
\Fk(\Xtk(s))-\Fk(\Xb(s))\rangle\,\dd s.
\end{aligned}
\end{equation}
We have that, for any $\epsilon_1>0$ there is $C_1$  such that
\begin{equation*}
\begin{aligned}
&-\langle \Xtk(s)-\Xh(s),
\Ak\big(\Xtk(s)-\Xb(s)\big)\rangle\\
&\quad =-\langle \Xtk(s)-\Xh(s),
\Ak\big(\Xtk(s)-\Xh(s)\big)\rangle-\langle \Xtk(s)-\Xh(s),
\Ak\big(\Xh(s)-\Xb(s)\big)\rangle\\
&\quad \leq -\|\Mk^{1/2}\Rk( \Xtk(s)-\Xh(s) ) \|_1^2 +\epsilon_1\|
\Mk^{1/2}\Rk(\Xtk(s)-\Xh(s))\|_1^2\\
&\qquad+C_1\|A^{1/2}M_k^{1/2}\Rk( \Xb(s)-\Xh(s)) \|^2.
\end{aligned}
\end{equation*}

Similarly, using \eqref{eq:onesided1},  \eqref{eq:local},
\eqref{eq:Jklip} and \eqref{eq:Mkco} we compute
\begin{equation*}
\begin{aligned}
&-\langle \Xtk(s)-\Xh(s),
\Fk(\Xtk(s))-\Fk(\Xb(s))\rangle\\
&\quad=-\langle \Xtk(s)-\Xh(s),
\Fk(\Xtk(s))-\Fk(\Xh(s))\rangle\\
&\quad\quad-\langle \Xtk(s)-\Xh(s),
\Fk(\Xh(s))-\Fk(\Xb(s))\rangle\\
&\quad \leq C\| \Rk(\Xtk(s)-\Xh(s))\|^2\\
&\quad\quad-\langle A^{1/2}\Rk\big(  \Xtk(s)-\Xh(s)\big),
A^{-1/2}\big( f(\Jk\Rk\Xh(s))-f(\Jk\Rk\Xb(s))\big)\rangle\\
&\quad\leq  C\| \Rk(\Xtk(s)-\Xh(s))\|^2+\epsilon_2 \|A^{1/2}\Rk\big(
\Xtk(s)-\Xh(s)\big)\|^2\\
&\quad \quad +C_2\|A^{-1/2}
f(\Jk\Rk\Xh(s))-f(\Jk\Rk\Xb(s))\|^2\\
&\quad\leq   C\| \Rk(\Xtk(s)-\Xh(s))\|^2+\epsilon_2 \|\Rk\big(
\Xtk(s)-\Xh(s)\big)\|_1^2  \\
&\quad \quad +C_3\big(\|\Jk\Rk\Xh(s)\|_1^4+
\|\Jk\Rk\Xb(s)\|_1^4+1\big) \|\Jk\Rk\Xh(s) -\Jk\Rk\Xb(s)\|^2\\
&\quad\leq   C\| \Xtk(s)-\Xh(s)\|^2+\epsilon_2 \|M^{1/2}\Rk\big(
\Xtk(s)-\Xh(s)\big)\|_1^2 \\
&\quad \quad +C_3\big(\|\Jk\Rk\Xh(s)\|_1^4+ \|\Jk\Rk\Xb(s)\|_1^4+1\big) \|\Xh(s) -\Xb(s)\|^2.
\end{aligned}
\end{equation*}
Taking $\epsilon_1+\epsilon_2=1$ and inserting these estimates into
\eqref{eq:ssito} we get, after taking first the supremum in time and
then the expectation, that
\begin{equation}
\label{eq:gronset}
\begin{aligned}
&\EE\sup_{0\leq t\leq T}\|\Xtk(t)-\Xh(t)\|^2
\leq C\Big(\int_0^T\EE\sup_{0\leq
  s\leq t}\|\Xtk(s)-\Xh(s)\|^2\,\dd t\\
&\qquad+\EE\int_0^T
\|A^{1/2}M_k^{1/2}\Rk( \Xb(s)-\Xh(s)) \|^2\,\dd s
\\
&\qquad+\EE\int^T_0\big( \|\Jk\Rk\Xh(s)\|_1^4+
\|\Jk\Rk\Xb(s)\|_1^4+1\big)
\|\Xh(s)
-\Xb(s)\|^2\, \dd s\Big).
\end{aligned}
\end{equation}
By H\"older's inequality with exponents $3/2$ and $3$, the linear
growth bound \eqref{eq:Jklip1} of $\Jk$ and the contarctivity of $\Rk$
in $\dH^1$,  the last term is bounded by
\begin{equation*}
\begin{aligned}
&C\Big(\EE \int^T_0 \big(\|\Xh(s)\|_1^6+
\|\Xb(s)\|_1^6+1\big)\,\dd s\Big)^{2/3}
\Big(\EE\int_0^T \big(\|\Xh(s)-\Xb(s)\|^6\big)\,\dd s\Big)^{1/3}
\\ &\qquad
\leq C(\EE\|X_0\|^{18}_1, \EE \|X_0\|_2^2,T,\ts_0,\HSAQ)\ts.
\end{aligned}
\end{equation*}
The last inequality follows since the first integral is bounded using Lemma~\ref{lem:discretebounds} and
Corollary~\ref{cor:jkrkxhat} and the second integral is bounded by
$C\ts$ using Proposition~\ref{prop:supEerr}. As $\Mk^{1/2}\Rk$ is a
bounded operator (uniformly in $k$) that commutes with $A^{1/2}$,
Proposition~\ref{prop:Eoneerr} asserts the same bound on the second
term in the right hand side of \eqref{eq:gronset} and thus
\begin{equation}
\EE\sup_{0\leq t\leq T}\|\Xtk(t)-\Xh(t)\|^2\leq C\Big(\ts+\int_0^T\EE\sup_{0\leq
  s\leq t}\|\Xtk(s)-\Xh(s)\|^2\,\dd t\Big),
\end{equation}
and the statement of the theorem follows by Gronwall's lemma.
\end{proof}
\begin{rem}
  Due to the fact that we have assumed additive noise, the noise term
  vanishes upon taking the difference between $\Xh$ and $\Xt$. Thus
  It\^o's formula is not needed in the proof of
  Theorem~\ref{thm:splitsteperror}. However, if multiplicative noise
  would have been considered and evaluated explicitly, the use of the
  fundamental theorem of calculus in \eqref{eq:ssito} could be
  replaced by the use of It\^o's formula since the corresponding
  construction of $\Xh$ will be adapted and continuous, and, under
  suitable assumptions on the covariance operator, similar results may
  be achieved. For the BE scheme in \eqref{eq:be}, however, a
  continuous adapted interpolation cannot be constructed.  See
  \cite{HMS} for the finite dimensional case.
\end{rem}

\subsection{Convergence of the backward Euler scheme}\label{sec:euler}

Finally, we show that the backward Euler scheme, \eqref{eq:be},  for \eqref{eq:sac} is an
$\Ordo(k^{1/2})$ perturbation of the split-step scheme
\eqref{eq:ss10}--\eqref{eq:ss1c}, thus converges with the same rate to
the solution of the stochastic Allen--Cahn equation.

First note that the elliptic equation
\begin{equation}\label{eq:bescheme}
x+k(Ax+f(x))=y
\end{equation}
has a unique weak solution
$x\in \dH^1$ for every $y\in \dH^1$ if $2k\beta^2<1$. To see this observe first that for any $K\in \R$
the function
\begin{equation}\label{eq:L}
L(\nabla u,u,\xi)=\tfrac12 k
|\nabla u|^2+\tfrac12{(1-k\beta^2)}|u|^2+\tfrac14{k} u^4-u y(\xi)+Ky(\xi)^2
\end{equation}
is a Lagrangian for \eqref{eq:bescheme} and if $K$ is large enough,
then it even satisfies the coercivity condition $L(\eta,\nu,\xi)\geq
C|\eta|^2$. Following \cite{MR2597943}, Chapter 8.2, Theorems 1, 2, and
4, the functional 
\begin{equation*}
I(v)=\int_{\cD}L(\nabla v,v,\xi)\,\dd \xi
\end{equation*}
has a minimiser $u\in W^{1,4}_0$ which is a weak solution of
\eqref{eq:bescheme} with test functions $v\in W^{1,4}_0$. Then
$u\in \dot{H}^1$ and using first test functions $v\in C^{\infty}_c$ in
the weak form of \eqref{eq:bescheme} and a standard approximation
argument one easily sees that $u$ is a weak solution with test
functions $v\in \dot{H}^1$. We note that Theorems 1 and 2 in
\cite{MR2597943} assume smoothness of $L$, which does not hold in our
case in the variable $\xi$ as $y$ only belongs to $\dot{H}^1$. However
given the simple and explicit dependence of $L$ on $\xi$ one verifies
that the proofs of Theorems 1 and 2 hold verbatim (in places even
using simpler arguments) for $L$ given by \eqref{eq:L}. Finally,
uniqueness of weak solutions \eqref{eq:bescheme} follows as in the
uniqueness part of the proof of Lemma~\ref{lem:fprop} from
\eqref{eq:onesided1} using also the positivity of $A$.

Hence, if $X_0$ and $\Delta W^{j-1}$ belongs to $\dH^1$ almost surely,
the BE scheme has a unique pathwise solution almost surely.

We need two results on H\"older regularity of $\Xj$.

\begin{lem}\label{prop:supHolder}
  If, for some $p\ge 1$, Assumption~\ref{ass:q} is satisfied with
  $q=6p$, then there exists 
  \begin{align*}
  C=C(k_0,p,T,\HSAQ,\EE\|X_0\|_1^{6p})>0 
  \end{align*}
  such that
\begin{equation*}
\sup_{1\leq j\leq N}\EE\|\Xj-\XX^{j-1}\|^{2p}\leq C\ts^p.
\end{equation*}
\end{lem}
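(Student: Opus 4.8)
The plan is to estimate the one-step increment directly from the Euler--Maruyama form of the scheme; indeed the argument is essentially that of Proposition~\ref{prop:supEerr}. Combining \eqref{eq:ss2a}--\eqref{eq:ss2c} as in \eqref{eq:onestep}, the scheme reads $\Xj=\Xjm-\ts\Ak\Xjm-\ts\Fk(\Xjm)+\Rk\Delta W^{j-1}$, so
\begin{equation*}
\Xj-\Xjm=-\ts\Ak\Xjm-\ts\Fk(\Xjm)+\Rk\Delta W^{j-1}.
\end{equation*}
This coincides with the quantity bounded in the proof of Proposition~\ref{prop:supEerr} (with $\Xjm$ in place of $\Xj$ and the full step $\ts$ in place of $t-t_j$), namely $\Xh(t)-\Xb(t)$ from \eqref{eq:xbar}--\eqref{eq:xhat1} at the endpoint $t=t_j$, so in fact the estimate is already contained there and I would simply spell it out.

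Concretely, I would raise the displayed identity to the power $2p$, apply \eqref{eq:psum}, and bound the three resulting terms: $\ts^{2p}\EE\|\Ak\Xjm\|^{2p}\le C\ts^p\EE\|\Xjm\|_1^{2p}$ by \eqref{eq:triv}; $\ts^{2p}\EE\|\Fk(\Xjm)\|^{2p}\le C(\ts_0)\ts^p\,\EE P_{6p}(\|\Xjm\|_1)$ by \eqref{eq:Fkgrej}, where the cubic growth of $f$ is what produces the degree $6p$; and $\EE\|\Rk\Delta W^{j-1}\|^{2p}\le C(p)\ts^p\|\Rk Q^{1/2}\|_{\HS}^{2p}$ by \eqref{eq:es1b} with time step $\ts$, with $\|\Rk Q^{1/2}\|_{\HS}\le C\HSAQ$ by \eqref{eq:hs} and the commutativity of $\Rk$ and $A^{1/2}$. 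Together these give
\begin{equation*}
\EE\|\Xj-\Xjm\|^{2p}\le C(\ts_0,p,\HSAQ)\,\ts^p\Big(1+\EE\sup_{0\le i\le N}P_{6p}(\|\XX^i\|_1)\Big).
\end{equation*}

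Finally, since Assumption~\ref{ass:q} holds with $q=6p$, Lemma~\ref{lem:discretebounds} bounds $\EE\sup_{0\le i\le N}\|\XX^i\|_1^{6p}$, hence $\EE\sup_i P_{6p}(\|\XX^i\|_1)$, by a constant $C(\ts_0,p,T,\HSAQ,\EE\|X_0\|_1^{6p})$, and taking the supremum over $1\le j\le N$ completes the proof. There is no real obstacle here; the only points to keep track of are that the exponent $q=6p$ in the hypothesis is precisely what the cubic nonlinearity forces, and that the factor $\ts^{-1/2}$ in \eqref{eq:triv} and \eqref{eq:Fkgrej} is exactly absorbed by the explicit $\ts$ multiplying $\Ak\Xjm$ and $\Fk(\Xjm)$, leaving the claimed half order $\ts^{1/2}$ (that is, $\ts^p$ after raising to the power $2p$).
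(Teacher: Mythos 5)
Your proof is correct and is precisely the argument the paper has in mind: the authors omit the proof, stating only that it is analogous to Proposition~\ref{prop:supEerr} upon writing the one-step increment in Euler--Maruyama form, and you have carried that out, correctly using \eqref{eq:triv}, \eqref{eq:Fkgrej}, \eqref{eq:es1b}, and Lemma~\ref{lem:discretebounds}. (Incidentally, your increment identity $\Xj-\Xjm=-\ts\Ak\Xjm-\ts\Fk(\Xjm)+\Rk\Delta W^{j-1}$, with $\Xjm$ on the right, is the one that actually follows from \eqref{eq:onestep}; the paper's remark before the lemma writes $\Xj$ there, which appears to be a small typo.)
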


\begin{lem}\label{prop:EoneHolder}
  If Assumption~\ref{ass:q} is satisfied with $q=6$ and
  $\EE\|X_0\|_2^2<\infty$, then there is
  \begin{align*}
  C=C(k_0,T,\HSAQ,\EE\|X_0\|_1^{6},\EE\|X_0\|_2^{2})>0
  \end{align*}
such that
\begin{equation*}
\sum_{j=1}^N\ts\EE\|\Xj-\XX^{j-1}\|_1^2\leq C\ts.
\end{equation*}
\end{lem}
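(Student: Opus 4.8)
The plan is to rerun the computation from the proof of Proposition~\ref{prop:Eoneerr}, now reading off a single time level instead of integrating in time. Reindexing the sum, the quantity to be bounded is $\sum_{j=0}^{N-1}\ts\,\EE\|X^{j+1}-\Xj\|_1^2$, and from \eqref{eq:onestep}, written with the abbreviations $\Ak$ and $\Fk$,
\begin{equation*}
X^{j+1}-\Xj=-\ts\Ak\Xj-\ts\Fk(\Xj)+\Rk\Wj,\qquad j=0,\dots,N-1.
\end{equation*}
This is exactly the increment bounded at the start of the proof of Proposition~\ref{prop:Eoneerr}, with $t-t_j$ replaced by the full step $\ts$. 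So first I would take $\dH^1$-norms, square, use $\|a+b+c\|^2\le 3(\|a\|^2+\|b\|^2+\|c\|^2)$, and treat the three contributions separately.

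For the linear term I would use $\|\Ak\Xj\|_1\le C\ts^{-1/2}\|\Xj\|_2$, which follows (as in the proof of Proposition~\ref{prop:Eoneerr}) from \eqref{eq:AAk} together with the interpolated Yosida bound \eqref{eq:arkg}; thus $\ts^2\EE\|\Ak\Xj\|_1^2\le C\ts\,\EE\|\Xj\|_2^2$, and this is where the gain $\ts^2\cdot\ts^{-1}=\ts$ comes from. For the nonlinear term, $\|\Fk(\Xj)\|_1=\|A^{1/2}\Rk\fk(\Rk\Xj)\|\le C\ts^{-1/2}\|\fk(\Rk\Xj)\|$ by \eqref{eq:AhRk}, while \eqref{eq:fgrowth}, the linear growth \eqref{eq:Jklip1} of $\Jk$ on $\dH^1$, and the contractivity of $\Rk$ give $\|\fk(\Rk\Xj)\|\le C(\ts_0)P_3(\|\Xj\|_1)$; hence $\ts^2\EE\|\Fk(\Xj)\|_1^2\le C(\ts_0)\ts\,\EE P_6(\|\Xj\|_1)$. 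For the stochastic term, $\Rk\Wj$ is an $\dH^1$-valued Gaussian with $\EE\|\Rk\Wj\|_1^2=\EE\|A^{1/2}\Rk\Wj\|^2=\ts\,\|A^{1/2}\Rk Q^{1/2}\|_{\HS}^2\le\ts\,\HSAQ^2$, the last step being the Hilbert--Schmidt estimate already used in Lemma~\ref{lem:qarkbnd}. Putting these together,
\begin{equation*}
\EE\|X^{j+1}-\Xj\|_1^2\le C(\ts_0)\,\ts\Big(\EE\|\Xj\|_2^2+\EE P_6(\|\Xj\|_1)+\HSAQ^2\Big),
\end{equation*}
so that, multiplying by $\ts$ and summing,
\begin{equation*}
\sum_{j=0}^{N-1}\ts\,\EE\|X^{j+1}-\Xj\|_1^2\le C(\ts_0)\,\ts\Big(\sum_{j=0}^{N}\ts\,\EE\|\Xj\|_2^2+T\,\EE\sup_{0\le j\le N}P_6(\|\Xj\|_1)+T\,\HSAQ^2\Big).
\end{equation*}

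Finally I would invoke Lemma~\ref{lem:discretebounds}: with $q=6$ (that is, $p=3$) it controls $\EE\sup_{0\le j\le N}\|\Xj\|_1^6$ and hence the $P_6$-term, and the additional hypothesis $\EE\|X_0\|_2^2<\infty$ controls $\sum_{j=0}^{N}\ts\,\EE\|\Xj\|_2^2$; both bounds depend only on $\ts_0,T,\HSAQ,\EE\|X_0\|_1^6,\EE\|X_0\|_2^2$, so the right-hand side is $\le C\ts$ of the required form. There is no serious obstacle here: the only care needed is the bookkeeping of negative powers of $\ts$ in the operator estimates, so that the $\ts^2$ in front of the drift increments exactly absorbs the $\ts^{-1}$ rather than producing a blow-up, and applying Lemma~\ref{lem:discretebounds} with the correct exponent. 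As in Proposition~\ref{prop:Eoneerr}, no It\^o formula is needed, since one works directly with the one-step recursion.
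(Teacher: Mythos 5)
Your proof is correct and takes exactly the route the paper itself indicates. The authors explicitly state that the proof of Lemma~\ref{prop:EoneHolder} is omitted as analogous to the proof of Proposition~\ref{prop:Eoneerr}, using the one-step increment $\Xj-\Xjm=-k\Ak\Xjm-k\Fk(\Xjm)+\Rk\Delta W^{j-1}$, and your argument is precisely that adaptation: the same three-term split, the same Yosida-type bounds $\|\Ak\Xj\|_1\le C\ts^{-1/2}\|\Xj\|_2$ and $\|\Fk(\Xj)\|_1\le C\ts^{-1/2}\|\fk(\Rk\Xj)\|$ with the polynomial growth estimate $\|\fk(\Rk\Xj)\|\le C(\ts_0)P_3(\|\Xj\|_1)$, the identity $\EE\|\Rk\Wj\|_1^2=\ts\|A^{1/2}\Rk Q^{1/2}\|_{\HS}^2\le\ts\HSAQ^2$, and then Lemma~\ref{lem:discretebounds} with $p=3$ for the sup moment and the $\dH^2$-sum moment. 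The bookkeeping of the $\ts$-powers is right.
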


The proofs of Lemmata~\ref{prop:supHolder} and \ref{prop:EoneHolder}
are omitted as they are analogous to the proofs of
Propositions~\ref{prop:supEerr} and \ref{prop:Eoneerr}, respectively,
noting that
$\Xj-\XX^{j-1}=-k\Ak\Xj-k\Fk(\Xj)+\int_{t_{j-1}}^{t_j}\Rk\,\dd W(s)$.

In the error analysis below we will restrict ourselves to the discrete grid for brevity. It is
worth noting that the proof of the following theorem is the fully
discrete analog of the proof of Theorem~\ref{thm:splitsteperror}.

\begin{thm}\label{thm:BESSerr}
  Let $\Xbej$ and $\Xj$ be the solution of \eqref{eq:be} and the
  system \eqref{eq:ss10}--\eqref{eq:ss1c}, respectively. Under the
  same assumptions as in Theorem~\ref{thm:splitsteperror} there exists
  $C>0$, depending on the same data as the constant in
  Theorem~\ref{thm:splitsteperror}, such that
\begin{equation}\label{eq:BESSerr}
\EE\sup_{0\leq j\leq N}\|\Xbej-\Xj \|^2\leq C\ts.
\end{equation}
\end{thm}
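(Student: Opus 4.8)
The plan is to mimic the proof of Theorem~\ref{thm:splitsteperror}, but entirely at the level of the temporal grid, treating the backward Euler iterate $\Xbej$ as a perturbation of the split-step iterate $\Xj$. First I would derive a one-step identity for the difference $E^j:=\Xbej-\Xj$. From \eqref{eq:be} and \eqref{eq:onestep} (or rather the equivalent form $X^{j}=X^{j-1}-k\Ak X^{j-1}-k\Fk(X^{j-1})+\Rk\Wj$ with the indices shifted), subtract to obtain
\begin{equation*}
E^j-E^{j-1}+kA E^j +k\big(f(\Xbej)-\text{(split-step drift at $j-1$)}\big)
= \big(\Rk-I\big)\Wj + \text{(operator discrepancy terms)}.
\end{equation*}
The key point is that the split-step scheme, when rewritten via $\Ak,\Fk,\Rk$, differs from backward Euler in three places: the linear operator ($\Ak$ versus $A$, acting on the new iterate versus mixed), the nonlinearity ($\Fk(X^{j-1})=\Rk f(\Jk\Rk X^{j-1})$ versus $f(\Xbej)$), and the noise ($\Rk\Wj$ versus $\Wj$). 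Each of these discrepancies should be $\Ordo(k^{1/2})$ in the appropriate norm, using \eqref{eq:Rkdiffnorm}/\eqref{eq:AhRk}/\eqref{eq:yosidacomp} for the operator and resolvent errors, \eqref{eq:IJF} for $x-\Jk x = k\fk(x)$, and the $\dH^1$-moment bounds of Lemma~\ref{lem:discretebounds} together with the H\"older bounds of Lemmata~\ref{prop:supHolder} and~\ref{prop:EoneHolder} to control the increments $\Xj-X^{j-1}$.

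Next I would test this identity against $E^j$ (take the $H$-inner product with $E^j$), which is the discrete analogue of applying It\^o's formula to $\tfrac12\|\Xbej-\Xj\|^2$ in Theorem~\ref{thm:splitsteperror}; since the noise increment only enters through the deterministic discrepancy $(\Rk-I)\Wj$, no stochastic calculus is needed. The term $k\langle AE^j,E^j\rangle = k\|E^j\|_1^2\ge 0$ is discarded, and for the nonlinear term I would split $f(\Xbej)-\Fk(X^{j-1})$ as $\big(f(\Xbej)-f(\Xj)\big) + \big(f(\Xj)-\Fk(\Xj)\big) + \big(\Fk(\Xj)-\Fk(X^{j-1})\big)$; the first piece is handled by the one-sided Lipschitz condition \eqref{eq:onesided1}, the third by the global $1/k$-Lipschitz property of $\Fk$ from Item~\eqref{it:oneside} of Lemma~\ref{lem:fprop} combined with $\|\Xj-X^{j-1}\|\le Ck^{1/2}$ from Lemma~\ref{prop:supHolder}, and the middle consistency piece $f(\Xj)-\Fk(\Xj)=f(\Xj)-\Rk f(\Jk\Rk\Xj)$ is estimated in $\dH^{-1}$ using \eqref{eq:local}, \eqref{eq:yosidacomp}, \eqref{eq:AhRk}, and the $\dH^1$-moment bounds, absorbing the $\|E^j\|_1$-factors into the discarded $k\|E^j\|_1^2$ by Cauchy's inequality with a small parameter. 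Summing over $j$, bounding the accumulated discrepancies by $Ck$ (the square-root discrepancies become $k$ after squaring and summing $N$ of them, each weighted by $k$), and applying the discrete Gronwall lemma, Lemma~\ref{lem:gronwallDiscrete}, yields $\EE\sup_{0\le n\le N}\|E^n\|^2\le Ck$.

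The main obstacle is the careful bookkeeping of the one-step consistency error between the two schemes in the $\dH^{-1}$-type norm, so that every term containing $E^j$ in $\dH^1$ can be absorbed by the positive dissipative term $k\|E^j\|_1^2$ rather than naively by $\|E^j\|$; this is exactly where the interpolation estimate \eqref{eq:arkg}, the identity \eqref{eq:yosidacomp}, and the sixth-order moment bounds on $\|\Xj\|_1$ must be orchestrated together, paralleling the use of $\epsilon_1,\epsilon_2$ in Theorem~\ref{thm:splitsteperror}. A secondary subtlety is that backward Euler is implicit in $f$ at the \emph{new} iterate $\Xbej$, so the one-sided Lipschitz step must be applied with $\langle f(\Xbej)-f(\Xj),\Xbej-\Xj\rangle\ge -\beta^2\|E^j\|^2$ appearing with the correct sign after testing; since $f$ is evaluated at $\Xbej$ on the left-hand side of \eqref{eq:be}, this comes out favourably, exactly as in the stability analysis for backward Euler applied to one-sided Lipschitz drifts, and no extra smallness condition beyond $2k_0\beta^2<1$ is required.
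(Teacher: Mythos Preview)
Your overall architecture matches the paper's: test the difference against $E^j=\Xbej-\Xj$, use the one-sided Lipschitz condition for $\langle f(\Xbej)-f(\Xj),E^j\rangle$, push the remaining nonlinear discrepancy into $\dot H^{-1}$ and absorb the resulting $\|E^j\|_1$ factors into the dissipative term $k\|E^j\|_1^2$, then close with the discrete Gronwall lemma. That is exactly how the paper proceeds (the paper first rewrites the split-step scheme as \eqref{eq:ssnew}, which makes the noise terms cancel identically and isolates the linear discrepancy more cleanly, but this is cosmetic).

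There is, however, a genuine gap in your treatment of the third nonlinear piece $\Fk(\Xj)-\Fk(X^{j-1})$. Using the global $1/k$-Lipschitz bound of Lemma~\ref{lem:fprop}\eqref{it:oneside} together with $\|\Xj-X^{j-1}\|\le Ck^{1/2}$ gives only $\|\Fk(\Xj)-\Fk(X^{j-1})\|\le Ck^{-1/2}$, hence
\[
k\big|\langle \Fk(\Xj)-\Fk(X^{j-1}),E^j\rangle\big|\le Ck^{1/2}\|E^j\|\le \epsilon k\|E^j\|^2+C_\epsilon,
\]
and summing the constant $C_\epsilon$ over $N=T/k$ steps produces an $O(k^{-1})$ term; no choice of splitting between $\|E^j\|$ and $\|E^j\|_1$ rescues this. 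The $1/k$-Lipschitz constant is simply too crude here: it throws away the spatial regularity that is essential for the rate.

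The remedy---and this is what the paper does in the term $e_4^b$---is to estimate this piece in $\dot H^{-1}$ via the \emph{local} Lipschitz condition \eqref{eq:local}:
\[
\|A^{-1/2}(\Fk(\Xj)-\Fk(X^{j-1}))\|\le C\,P^1(\|\Xj\|_1,\|X^{j-1}\|_1)\,\|\Xj-X^{j-1}\|,
\]
using \eqref{eq:Jklip}, \eqref{eq:Jklip1} to pass through $\Jk\Rk$. After Cauchy with small parameter this yields $\epsilon k\|E^j\|_1^2+C_\epsilon k\,P_4(\|\Xj\|_1,\|X^{j-1}\|_1)\|\Xj-X^{j-1}\|^2$, and the paper then handles the sum $\EE\sum_j kP_4\cdot\|\Xj-X^{j-1}\|^2$ by H\"older's inequality with exponents $5/2$ and $5/3$, invoking Lemma~\ref{lem:discretebounds} for the tenth moments of $\|\Xj\|_1$ and Lemma~\ref{prop:supHolder} with $2p=10/3$ for the increment. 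With this correction your argument goes through; in fact the paper combines your second and third pieces into a single term $f(\Xj)-f_k(\Rk X^{j-1})$ and bounds $\|\Xj-\Jk\Rk X^{j-1}\|$ directly via \eqref{eq:yosidacomp}, which is slightly more economical but equivalent in spirit.
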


\begin{proof}
We first note that the split-step scheme may be written as
\begin{equation}\label{eq:ssnew}
  \begin{aligned}
\Xj-\Xjm+\ts A\Xj
-\hts A(\Xj-\Xjm+\hts A\Rk \Xjm)
+\ts \fk(\Rk \Xjm)=\Delta W^{j-1},
  \end{aligned}
\end{equation}
for  $j=1,2\ldots,N$ with $\XX^0=X_0$.
This is easiest seen by multiplying both sides in the first equality
of \eqref{eq:onestep}  by $I+\tfrac
k2A=\Rk^{-1}$, using that $\Rk \Xjm=(I-\tfrac k2A(I-\tfrac k2
A\Rk))\Xjm$ and then rearranging. We leave the details to
the reader.

Subtracting \eqref{eq:ssnew} from \eqref{eq:be}, taking inner
products with $\Xbej-\Xj$ and rearranging yields
\begin{equation}\label{eq:bigbe}
\begin{aligned}
&\tfrac12\big(\|\Xbej-\Xj \|^2-\|\Xbe^{j-1}-\Xjm
\|^2+\|\Xbej-\Xj-\Xbe^{j-1}+\Xjm\|^2 \big)
+
k\|\Xbej-\Xj \|_1^2\\
&\quad =\frac k2\langle \Xj-\Xjm,\Xbej-\Xj
\rangle_1+\frac {k^2}4\langle A\Rk\Xjm,\Xbej-\Xj\rangle_1\\
&\qquad -k\langle
f(\Xbej)-f(\Xj),\Xbej-\Xj \rangle\\
&\qquad-k\langle A^{-1/2}
(f(\Xj)-\fk(\Rk X^{j-1})),A^{1/2}(\Xbej-\Xj) \rangle\\
&\quad=:k(e^b_1+e^b_2+e^b_3+e^b_4).
\end{aligned}
\end{equation}
It is easily seen that
\begin{align}
|e_1^b|&\leq C\| \Xj-\Xjm\|_1^2+\epsilon\|\Xbej-\Xj\|_1^2,\label{eq:ebe1}\\
|e_2^b|&\leq Ck^4\| A\Rk\Xjm\|_1^2+\epsilon\|\Xbej-\Xj\|_1^2,\\
e_3^b&\leq \beta^2\| \Xbej-\Xj\|^2, \label{eq:ebe3}\\
\label{eq:ebe4}
|e_4^b|&\leq C\| A^{-1/2} ( f(\Xj)-f(\Jk\Rk X^{j-1}))\|^2+\epsilon
\|\Xbej-\Xj\|_1^2.
\end{align}
By \eqref{eq:local}, the linear growth bound \eqref{eq:Jklip1} of
$\Jk$ , and the contarctivity of $\Rk$ in $\dH^1$, we have 
\begin{equation}\label{eq:lipbe}
  \begin{aligned}
\| A^{-1/2} ( f(\Xj)-f(\Jk\Rk\Xjm))\|
\leq C\Big(\|\Xj \|_1^2+\|\Xjm\|_1^2 +1\Big)\|\Xj-\Jk\Rk\Xjm\|.
  \end{aligned}
\end{equation}
From the triangle inequality, \eqref{eq:yosidacomp}, \eqref{eq:AhRk}
and \eqref{eq:fgrowth}  it follows that
\begin{equation}\label{eq:yosbe}
\begin{aligned}
\|\Xj-\Jk\Rk\Xjm\|&\leq \|\Xj-\Xjm\|+k\|A\Rk\Xjm\|+k\|\fk(\Rk\Xjm)\|\\
&\leq \|\Xj-\Xjm\|+k^{1/2}\|\Xjm\|_1+k(\|\Xjm\|_1^3+1).
\end{aligned}
\end{equation}
Thus, from \eqref{eq:ebe4}--\eqref{eq:yosbe} we get
\begin{equation}\label{eq:myeq}
  \begin{aligned}
|e_4^b|&\leq C\Big(P_4(\|\Xj
\|_1,\|\Xjm\|_1 ) \|\Xj-\Xjm\|^2+\ts P_{10}(\|\Xjm\|,\|\Xj
\|)\Big)\\
& \quad
+ \epsilon \|\Xbej-\Xj\|_1^2
  \end{aligned}
\end{equation}
with $P_4$ and $P_{10}$ being a positive polynomials of degree 4 and
10, respectively.  Inserting the bounds in
\eqref{eq:ebe1}--\eqref{eq:ebe3} and \eqref{eq:myeq} into
\eqref{eq:bigbe} we conclude that
\begin{equation*}
\begin{aligned}
&\tfrac12\Big(\|\Xbej-\Xj \|^2- \|\Xbe^{j-1}-\Xjm
\|^2\Big) +k\|\Xbej-\Xj \|_1^2
\\ & \qquad 
\leq k\Big\{3 \epsilon\|\Xbej-\Xj
\|^2_1+\beta^2
\|\Xbej-\Xj\|^2 +C\big(\| \Xj-\Xjm\|_1^2+\ts^3\| A\Rk\Xjm\|_1^2
\\ & \qquad\quad +P_4(\|\Xj
\|_1,\|\Xjm\|_1^4 ) \|\Xj-\Xjm\|^2+\ts P_{10}(\|\Xjm\|_1,\|\Xj
\|_1)\big)\Big\}.
\end{aligned}
\end{equation*}
By \eqref{eq:AhRk} we have that $k\| A\Rk\Xjm\|_1^2\leq \|
A\Xjm\|^2$.
Choosing $\epsilon=\tfrac13$, we get, after summing up from $j=1$ to
$j=N$ and taking first the supremum over the discrete times and then
the expectation, that
\begin{equation}\label{eq:becomp}
\begin{aligned}
&\EE\sup_{0\leq n\leq N}\tfrac12\|\Xbe^n-\XX^n \|^2
\leq  \EE\sum_{j=1}^N
k\beta^2 \|\Xbej-\Xj\|^2\\
&\qquad+Ck\Big\{k\sum_{j=1}^N  \EE \big( \| A\Xjm\|^2+P_{10}(\|\Xjm\|,\|\Xj
\|)\big)\Big\}\\
&\qquad
+ C\sum_{j=1}^N k\EE\| \Xj-\Xjm\|_1^2
+C\EE\sum_{j=1}^N kP_4(\|\Xj
\|_1,\|\Xjm\|_1 ) \|\Xj-\Xjm\|^2.
\end{aligned}
\end{equation}
From Lemma~\ref{lem:discretebounds} it follows that there is a
constant $C_1>0$ depending on $\ts_0$, $t_N$, $\EE\|X_0\|_1^{10}$,
$\EE\|X_0\|_2^2$, and $\HSAQ$ that bounds the two sums in
the parenthesis in the right hand side of \eqref{eq:becomp}.  The next
term is bounded in Lemma~\ref{prop:EoneHolder}.
For the last sum we use H\"older's inequality
with exponents $5/2$ and $5/3$ to conclude that
\begin{equation}\label{eq:becomp2}
\begin{aligned}
&k\EE\sum_{j=1}^N P_4(\|\Xj
\|_1,\|\Xjm\|_1) \|\Xj-\Xjm\|^2\\
&\qquad\leq
\Big(\EE\sum_{j=1}^N kP_{10}(\|\Xj
\|_1,\|\Xjm\|_1)\Big)^{2/5}\Big( \EE\sum_{j=1}^N
k \|\Xj-\Xjm\|^{10/3}\Big) ^{3/5}.
\end{aligned}
\end{equation}
By Lemmata~\ref{lem:discretebounds} and \ref{prop:supHolder} there
exists a $C_2$ depending only on $k_0$, $t_N$, $\EE\|X_0\|^{10}_1$, and
$\HSAQ,$ such that the expression on the right hand side in \eqref{eq:becomp2}
is bounded by $ C_2\ts$.
Thus, for some $C>0$, we have that
\begin{equation}
\begin{aligned}
\EE\sup_{0\leq n\leq N}\|\Xbe^n-\XX^n \|^2&\leq  \sum_{j=1}^N
2k\beta^2 \EE\sup_{0\leq i\leq j}\|\Xbe^i-\XX^i\|^2+C\ts
\end{aligned}
\end{equation}
and, since $2k\beta^2< 1 $, the claim follows by
Lemma~\ref{lem:gronwallDiscrete}.
\end{proof}
\begin{rem}
The proof of Theorem~\ref{thm:BESSerr} gives a little bit more than
what is stated in the theorem. Only $\EE\|X_0\|_1^{10}$ needs to be
bounded and not $\EE\|X_0\|_1^{18}$.
\end{rem}

\subsection*{Acknowledgement}
  M.~Kov\'acs was supported by Marsden Fund project number
  UOO1418. F.~Lindgren was supported by JSPS KAKENHI grant number
  15K45678 and STINT, the Swedish Foundation for International
  Cooperation in Research and Higher Education.  The authors thank the
  anonymous referees for helpful criticism. 


\begin{thebibliography}{[10]}

\bibitem{HMS}
 \textsc{D.\,J. Higham},  \textsc{X.~Mao},  and  \textsc{A.\,M. Stuart},
Strong convergence of {E}uler-type methods for nonlinear stochastic
  differential equations,
 \jr{SIAM J. Numer. Anal.} \textbf{40}, 1041--1063 (electronic) (2002).


\bibitem{KLLAC}
 \textsc{M.~Kov{\'a}cs},  \textsc{S.~Larsson},  and  \textsc{F.~Lindgren},
On the backward euler approximation of the stochastic {A}llen-{C}ahn equation,
 \jr{J. Appl. Probab.} \textbf{52}, 323--338 (2015).


\othercit
\bibitem{MR2393581}
 \textsc{I.~Gy{\"o}ngy} and  \textsc{A.~Millet},
Rate of convergence of implicit approximations for stochastic evolution
  equations,
 in: Stochastic {Differential} {Equations}: {Theory} and {Applications},
  Interdiscip. Math. Sci.,  Vol.\,2 (World Sci. Publ., Hackensack, NJ, 2007),
  pp.\,281--310.


\bibitem{CoxVN}
 \textsc{S.~Cox} and  \textsc{J.~van Neerven},
Pathwise {H}\"older convergence of the implicit-linear {E}uler scheme for
  semi-linear {SPDE}s with multiplicative noise,
 \jr{Numer. Math.} \textbf{125}, 259--345 (2013).


\othercit
\bibitem{MR2073438}
 \textsc{I.~Gy{\"o}ngy} and  \textsc{N.~Krylov},
On the rate of convergence of splitting-up approximations for {SPDE}s,
 in: Stochastic Inequalities and Applications,  Progr. Probab.\,Vol.\,56
  (Birkh\"auser, Basel, 2003),  pp.\,301--321.


\bibitem{MR1964941}
 \textsc{I.~Gy{\"o}ngy} and  \textsc{N.~Krylov},
On the splitting-up method and stochastic partial differential equations,
 \jr{Ann. Probab.} \textbf{31}, 564--591 (2003).


\bibitem{GyM}
 \textsc{I.~Gy{\"o}ngy} and  \textsc{A.~Millet},
On discretization schemes for stochastic evolution equations,
 \jr{Potential Anal.} \textbf{23}, 99--134 (2005).


\bibitem{Haus1}
 \textsc{E.~Hausenblas},
Numerical analysis of semilinear stochastic evolution equations in {B}anach
  spaces,
 \jr{J. Comput. Appl. Math.} \textbf{147}, 485--516 (2002).


\bibitem{Haus2}
 \textsc{E.~Hausenblas},
Approximation for semilinear stochastic evolution equations,
 \jr{Potential Anal.} \textbf{18}, 141--186 (2003).


\bibitem{MR3081484}
 \textsc{Z.~Brze{\'z}niak},  \textsc{E.~Carelli},  and  \textsc{A.~Prohl},
Finite-element-based discretizations of the incompressible {N}avier-{S}tokes
  equations with multiplicative random forcing,
 \jr{IMA J. Numer. Anal.} \textbf{33}(3), 771--824 (2013).


\othercit
\bibitem{GySS}
 \textsc{I.~Gy{\"o}ngy},  \textsc{S.~Sabanis},  and  \textsc{D.~\v{S}iska},
Strong convergence rates for an explicit numerical approximation method for
  stochastic evolution equations with non-globally {Lipschitz} continuous
  nonlinearities,
\textit{ArXiv:1407.7107}.


\othercit
\bibitem{Kur}
 \textsc{R.~Kurniawan},
Numerical approximations of stochastic partial differential equations with
  non-globally {Lipschitz} continuous nonlinearities,
Master thesis, ETH Z\"urich, 2014.


\othercit
\bibitem{JP}
 \textsc{A.~Jentzen} and  \textsc{P.~Primo\v{z}},
Strong convergence rates for an explicit numerical approximation method for
  stochastic evolution equations with non-globally {Lipschitz} continuous
  nonlinearities,
\textit{ArXiv:1504.03523}.


\othercit
\bibitem{Kop}
 \textsc{M.~Kopec},
Quelques contributions \`{a} l'analyse num\'{e}rique d'\'{e}quations
  stochastiques,
PhD thesis, ENS Rennes, 2014.
  https://tel.archives-ouvertes.fr/tel-01064811/document.


\othercit
\bibitem{DPZ}
 \textsc{G.~Da~Prato} and  \textsc{J.~Zabczyk},
Stochastic {Equations} in {Infinite} {Dimensions}, Encyclopedia of Mathematics
  and its Applications,  Vol.\,44 (Cambridge University Press, Cambridge,
  1992).


\othercit
\bibitem{MR2798103}
 \textsc{W.~Arendt},  \textsc{C.\,J.\,K. Batty},  \textsc{M.~Hieber},  and
  \textsc{F.~Neubrander},
Vector-{Valued} {L}aplace {Transforms} and {C}auchy {Problems}, second edition,
  Monographs in Mathematics,  Vol.\,96 (Birkh\"auser/Springer Basel AG, Basel,
  2011).


\bibitem{Liu}
 \textsc{W.~Liu},
Well-posedness of stochastic partial differential equations with {L}yapunov
  condition,
 \jr{J. Differential Equations} \textbf{255}, 572--592 (2013).


\bibitem{Stig}
 \textsc{C.\,M. Elliott} and  \textsc{S.~Larsson},
Error estimates with smooth and nonsmooth data for a finite element method for
  the {C}ahn-{H}illiard equation,
 \jr{Math. Comp.} \textbf{58}, 603--630, S33--S36 (1992).


\othercit
\bibitem{MR2744841}
 \textsc{D.~Furihata} and  \textsc{T.~Matsuo},
Discrete {Variational} {Derivative} {Method}, Chapman \& Hall/CRC Numerical
  Analysis and Scientific Computing (CRC Press, Boca Raton, FL, 2011),
A structure-preserving numerical method for partial differential equations.


\othercit
\bibitem{PR}
 \textsc{C.~Pr{\'e}v{\^o}t} and  \textsc{M.~R{\"o}ckner},
A {Concise} {Course} on {Stochastic} {Partial} {Differential} {Equations},
  Lecture Notes in Mathematics,  Vol.\,1905 (Springer, Berlin, 2007).


\othercit
\bibitem{Brzezniak}
 \textsc{Z.~Brzezniak},
Some remarks on {Ito} and {Stratonovich} integration in 2-smooth {Banach}
  spaces,
 in: Probabilistic Methods in Fluids, Proceedings of the Swansea 2002 Workshop,
  edited by I.\,M. Davies, N.~Jacob,  and A.~Truman (World Scientific, 2003),
  pp.\,48--69.


\bibitem{MR3273327}
 \textsc{M.~Kov{\'a}cs},  \textsc{S.~Larsson},  and  \textsc{A.~Mesforush},
Erratum: {F}inite element approximation of the {C}ahn-{H}illiard-{C}ook
  equation,
 \jr{SIAM J. Numer. Anal.} \textbf{52}, 2594--2597 (2014).


\bibitem{KLM}
 \textsc{M.~Kov{\'a}cs},  \textsc{S.~Larsson},  and  \textsc{A.~Mesforush},
Finite element approximation of the {C}ahn-{H}illiard-{C}ook equation,
 \jr{SIAM J. Numer. Anal.} \textbf{49}, 2407--2429 (2011).


\othercit
\bibitem{MR2111320}
 \textsc{G.~Da~Prato},
Kolmogorov {Equations} for {Stochastic} {PDE}s, Advanced Courses in
  Mathematics. CRM Barcelona (Birkh\"auser Verlag, Basel, 2004).


\othercit
\bibitem{MR2597943}
 \textsc{L.\,C. Evans},
Partial Differential Equations, second edition, Graduate Studies in
  Mathematics,  Vol.\,19 (American Mathematical Society, Providence, RI, 2010).


\end{thebibliography}
\def\cprime{$'$}
\providecommand{\WileyBibTextsc}{}
\let\textsc\WileyBibTextsc
\providecommand{\othercit}{}
\providecommand{\jr}[1]{#1}
\providecommand{\etal}{~et~al.}

\end{document}